\tikzset{
    >=stealth,
    every picture/.style={thick},
    graphs/every graph/.style={empty nodes},
}
\tikzstyle{vertex}=[
\tikzstyle{printersafe}=[decoration={snake,amplitude=0pt}]
\newcommand{\Cl}{\operatorname{Cl}}
\newcommand{\Pic}{\operatorname{Pic}}
\newcommand{\pp}{\mathbb{P}}
\newcommand{\qq}{\mathbb{Q}}
\newcommand{\zz}{\mathbb{Z}}
\newcommand{\rr}{\mathbb{R}}
\newcommand{\oo}{\mathcal{O}}
\def\O#1.{\mathcal {O}_{#1}}   
\def\pr #1.{\mathbb P^{#1}}    
\def\af #1.{\mathbb A^{#1}}   
\def\ses#1.#2.#3.{0\to #1\to #2\to #3 \to 0} 
\def\xrar#1.{\xrightarrow{#1}}   
\def\K#1.{K_{#1}}      
\def\bA#1.{\mathbf{A}_{#1}}   
\def\bM#1.{\mathbf{M}_{#1}}    
\def\bL#1.{\mathbf{L}_{#1}}    
\def\bB#1.{\mathbf{B}_{#1}}    
\def\bK#1.{\mathbf{K}_{#1}}   
\def\subs#1.{_{#1}}     
\def\sups#1.{^{#1}}
\DeclareMathOperator{\mld}{mld}
  \newtheorem{theorem}{Theorem}[section]
  \newtheorem{lemma}[theorem]{Lemma}
  \newtheorem{proposition}[theorem]{Proposition}
  \newtheorem{corollary}[theorem]{Corollary}
  \newtheorem{definition}[theorem]{Definition}
  \newtheorem{example}[theorem]{Example}
  \newtheorem{question}[theorem]{Question}
\newtheorem{remark}[theorem]{Remark}
\theoremstyle{remark}
\numberwithin{equation}{section}
\begin{document}

\title[Degenerations of cluster type varieties]
{Degenerations of cluster type varieties}

\author[J.~Moraga]{Joaqu\'in Moraga}
\address{UCLA Mathematics Department, Box 951555, Los Angeles, CA 90095-1555, USA
}
\email{jmoraga@math.ucla.edu}

\author[J.P.~Z\'u\~niga]{Juan Pablo Z\'u\~niga}
\address{Facultad de Matem\'aticas,
Pontificia Universidad Cat\'olica de Chile, Santiago, Chile.
}
\email{jpzuniga3@uc.cl}

\subjclass[2020]{Primary 14M25, 14E25;
Secondary  14B05, 14E30, 14E05.}
\keywords{Degenerations, rational surfaces, complements, toric morphisms}

\thanks{JM was partially supported by NSF research grant DMS-2443425. JPZ was supported by the ANID National Doctoral Scholarship 2022–21221224.}

\begin{abstract}
We study degenerations
of cluster type varieties and pairs. 
Our first theorem proves that degenerations 
of toric pairs are finite quotients of toric pairs.
In a similar vein, under some mild conditions, 
we prove that degenerations of cluster type pairs
are finite quotients of cluster type pairs.
Then, we focus on degenerations of cluster type surfaces. 
We give some general criteria for the existence of $1$-complements
on degenerations of toric surfaces. 
We prove that for {\em almost all} $(a,b,c)\in \mathbb{Z}_{\geq 1}^3$
the weighted projective plane $\mathbb{P}(a,b,c)$ has no non-trivial degenerations. 
In particular, for a Markov triple $(a,b,c)\in \zz_{\geq 2}^3$, we prove that $\pp(a^2,b^2,c^2)$ admits no non-trivial degenerations. 
Finally, we give a complete classification of the degenerations of
$\mathbb{P}(1,1,n)$ for $n\geq 3$.
\end{abstract} 

\maketitle

\setcounter{tocdepth}{1} 
\tableofcontents

\section{Introduction}

In this article, we study degenerations of varieties and pairs. 
We focus on degenerations with central fiber having klt singularities.
These are known as {\em klt degenerations}. 
The problem is particularly interesting when the general fiber $\mathcal{X}_t$ of the degeneration $\mathcal{X}\rightarrow \mathbb{D}$ is a Fano variety.
In this case, there are many possible degenerations $\mathcal{X}_0$ 
and these degenerations are interesting even from the combinatorial perspective. 
For instance, it is known that the toric degenerations of $\mathbb{P}^2$ are given by 
\begin{equation}\label{eq:markov-triple}
\mathbb{P}(a^2,b^2,c^2) \text{ with $a^2+b^2+c^2=3abc$.}
\end{equation} 
These triples $(a,b,c)\in \zz_{\geq 1}^3$ are known as {\em Markov triples}. 
The topic of degenerations of projective surfaces is classic in algebraic geometry. 
In~\cite{HP05}, Hacking and Prokhorov proved that any degeneration of $\mathbb{P}^2$ is indeed a partial smoothing of a toric surface as in~\eqref{eq:markov-triple}.
These surfaces were studied by Manetti and are nowadays known as 
{\em Manetti surfaces} (see~\cite{Man91}).
In~\cite{HP10}, Hacking and Prokhorov classify del Pezzo surfaces, with quotient singularities and Picard rank one, admitting $\mathbb{Q}$-Gorenstein smoothings. 
In~\cite{Pro19}, Prokhorov studied log canonical degenerations with Picard rank one of del Pezzo surfaces in $\qq$-Gorenstein families.
In~\cite{UZ24}, Urzua and the second author explain how to relate 
the degenerations as in~\eqref{eq:markov-triple} via birational transformations. 
Given two degenerations $\mathcal{X} \rightarrow \mathbb{D}$ 
and $\mathcal{Y}\rightarrow \mathbb{D}$ of $\pp^2$ into 
$\mathcal{X}_0 \simeq \mathbb{P}(a,b,c)$
and $\mathcal{Y}_0\simeq \mathbb{P}(d,e,f)$, 
the authors explain how to perform small birational modifications to go from 
$\mathcal{X}\rightarrow \mathbb{D}$ to $\mathcal{Y}\rightarrow \mathbb{D}$
and how these small modifications reflect on the combinatorics of the Markov triples.
In higher dimensions, H\"oring and Peternell proved that a klt degeneration $X$ of $\mathbb{P}^n$ is indeed isomorphic to the $n$-dimensional projective space whenever $T_X$ is semistable (see~\cite{HP24}).
They also classify normal degenerations of $\pp^3$ with canonical singularities.
In~\cite{HKW25}, Hausen, Kir\'aly, and Wrobel explicitly determine all log terminal,
rational, degenerations of $\pp^2$ that admit non-trivial torus actions.
In~\cite{UZ25}, Urzua and the second author classified all Wahl singularities that appear
in the degenerations of del Pezzo surfaces of degree $d$,
extending the work of Manetti and Hacking-Prokhorov in degree $9$.
Recently in~\cite{Pen25}, Peng studied special $\mathbb{G}_m$-degenerations of del Pezzo surfaces $X$
induced by log canonical places of pairs $(X,C)$ where $C$ is a nodal curve.
Peng proved that the space of special valuations of $(X,C)$ is connected and admits a 
partition, which is locally finite, and each interval corresponds to a different $\mathbb{G}_m$-degeneration of $X$. 

Throughout this work, we focus on understanding the degenerations of cluster type
varieties and cluster type pairs.
A {\em cluster type pair} is a pair $(X,B)$ with mild singularities
for which $K_X+B\sim 0$ and there is an embedding in codimension one 
$\mathbb{G}_m^n \dashrightarrow X\setminus B$ (see Definition~\ref{def:ct}).  
A {\em cluster type variety} is a variety $X$ that admits a cluster type pair $(X,B)$ structure. 
In this case, we call $B$ a cluster type boundary. 
Cluster type varieties and pairs can be thought of as a generalization of toric varieties and pairs. 
Indeed, any toric pair is a cluster type pair.
However, the realm of cluster type pairs is much broader. 
A del Pezzo surface of degree $d\geq 2$ as well as a general del Pezzo
surface of degree $d=1$ is cluster type (see, e.g.,~\cite[Theorem 2.1 and Remark 2.2]{ALP23}). 
Thus, all the examples discussed in the introduction are cluster type varieties.
Our first aim is to understand degenerations of cluster type pairs 
for which the limit of the cluster type boundary still has reasonable singularities. 
In the next subsection, we prove that there are not many such degenerations
for toric pairs, but quite a lot for cluster type pairs.

\subsection{Degenerations of cluster type pairs}

Our first theorem states that degenerations of toric pairs
are finite quotients of toric pairs. We impose some conditions 
on the singularities of the central fiber.

\begin{theorem}\label{thm:toric-deg}
Let $\pi\colon \mathcal{X}\rightarrow \mathbb{D}$ be a projective fibration.
Let $(\mathcal{X},\mathcal{X}_0+\mathcal{B})$ be a log Calabi--Yau pair over $\mathbb{D}$ for which $(\mathcal{X},\mathcal{X}_0)$ is plt. 
If $(\mathcal{X}_t,\mathcal{B}_t)$ is a toric pair\footnote{Meaning that $\mathcal{X}_t$ is a projective toric variety and $\mathcal{B}_t$ is the reduced torus invariant boundary.} for $t \in \mathbb{D}^*$,
then $(\mathcal{X}_0,\mathcal{B}_0)$ is a finite quotient of a toric pair. 
\end{theorem}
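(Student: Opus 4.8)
The plan is to reduce the statement to the complexity characterization of toric pairs (à la Brown--McKernan--Svaldi--Zong) applied to the central fiber, after correcting by adjunction. First I would shrink $\mathbb{D}$ around $0$ so that $\mathcal{B}$ is horizontal over $\mathbb{D}$ and $\mathcal{X}_0$ is the unique special fiber. Since $(\mathcal{X},\mathcal{X}_0)$ is plt, $\mathcal{X}_0$ is normal and irreducible; by divisorial adjunction the pair $(\mathcal{X}_0,\mathcal{B}_0)$ with $\mathcal{B}_0:=\operatorname{Diff}_{\mathcal{X}_0}(\mathcal{B})$ satisfies $K_{\mathcal{X}_0}+\mathcal{B}_0=(K_{\mathcal{X}}+\mathcal{X}_0+\mathcal{B})|_{\mathcal{X}_0}\sim_{\mathbb{Q}}0$, and it is klt by inversion of adjunction for plt pairs. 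Thus $(\mathcal{X}_0,\mathcal{B}_0)$ is a projective klt log Calabi--Yau pair, and the components of $\mathcal{B}_0$ of coefficient strictly less than one are exactly the images of the non-Cartier locus of $\mathcal{X}_0$ in $\mathcal{X}$.

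The heart of the argument is to show that the complexity $c(\mathcal{X}_0,\mathcal{B}_0)$ equals $0$. Since the complexity of a log Calabi--Yau pair is always nonnegative, it suffices to prove the inequality $c(\mathcal{X}_0,\mathcal{B}_0)\leq c(\mathcal{X}_t,\mathcal{B}_t)$, together with the observation that $(\mathcal{X}_t,\mathcal{B}_t)$ being toric with full reduced torus-invariant boundary forces $c(\mathcal{X}_t,\mathcal{B}_t)=0$. For the inequality I would: (i) run a relative $(K_{\mathcal{X}}+\mathcal{X}_0+\mathcal{B})$-trivial minimal model program over $\mathbb{D}$ to reduce to the case in which $\mathcal{X}$ is $\mathbb{Q}$-factorial, keeping track of how each step affects $\rho(\mathcal{X}/\mathbb{D})$ and the boundary divisors (no horizontal component of $\mathcal{B}$ can be contracted, as $K_{\mathcal{X}}+\mathcal{X}_0+\mathcal{B}\sim_{\mathbb{D}}0$); (ii) use the adjunction inequality for complexity along the plt divisor $\mathcal{X}_0\leq\lfloor \mathcal{X}_0+\mathcal{B}\rfloor$, giving $c(\mathcal{X}_0,\mathcal{B}_0)\leq c(\mathcal{X},\mathcal{X}_0+\mathcal{B})$; (iii) compare $c(\mathcal{X},\mathcal{X}_0+\mathcal{B})$ with $c(\mathcal{X}_t,\mathcal{B}_t)$ through the generic fiber, where the drop in Picard number from $\mathcal{X}$ to $\mathcal{X}_t$ is matched against the difference in the number of boundary components, using the restriction map on Weil divisor class groups and the fact that the only vertical divisor of $\mathcal{X}$ is $\mathcal{X}_0$.

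Once $c(\mathcal{X}_0,\mathcal{B}_0)=0$ is established, I would invoke the refined form of the toric characterization: a projective klt log Calabi--Yau pair of complexity zero is, up to crepant finite cover, a toric pair. Concretely, there is a finite Galois morphism $f\colon Y\to\mathcal{X}_0$ with $K_Y+\mathcal{B}_Y=f^{*}(K_{\mathcal{X}_0}+\mathcal{B}_0)$ and $(Y,\mathcal{B}_Y)$ a toric pair, where $f$ ramifies precisely along the components of $\mathcal{B}_0$ of coefficient less than one (the fractional part of the different, which the cover kills). Setting $G=\operatorname{Gal}(Y/\mathcal{X}_0)$ we get $\mathcal{X}_0=Y/G$, so $(\mathcal{X}_0,\mathcal{B}_0)$ is a finite quotient of the toric pair $(Y,\mathcal{B}_Y)$, which is the assertion.

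I expect step two, the invariance of complexity in the family, to be the main obstacle: specialization typically increases the Picard number, so one must show this increase is exactly absorbed by an increase in the number of boundary components, and it is precisely here that the log Calabi--Yau and plt hypotheses are indispensable. The most delicate bookkeeping is tracking Weil divisor classes (not merely Cartier divisors) on the possibly non-$\mathbb{Q}$-factorial total space $\mathcal{X}$; a secondary subtlety is verifying that no component of $\mathcal{B}$ becomes vertical or is contracted in the limit, which again follows from $(\mathcal{X},\mathcal{X}_0)$ being plt and $K_{\mathcal{X}}+\mathcal{X}_0+\mathcal{B}\sim_{\mathbb{D}}0$.
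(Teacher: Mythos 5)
There is a genuine gap, and it is fatal to the route as you have set it up. Your plan hinges on proving $c(\mathcal{X}_0,\mathcal{B}_0)=0$ and then invoking the complexity characterization. But the Brown--McKernan--Svaldi--Zong theorem (and its relative version used in this paper) says that a log canonical pair with $-(K+\Delta)$ nef and complexity strictly less than one \emph{is} toric --- no finite cover appears in the conclusion. So if your step two succeeded, you would have proved that $(\mathcal{X}_0,\mathcal{B}_0)$ is itself a toric pair, which is false: Example~\ref{ex:non-toric-central} of the paper exhibits a degeneration satisfying all the hypotheses whose central fiber is $((\pp^1)^n,B_T)/\zz_2$, with dual complex $\mathbb{P}^{n-1}_{\rr}$, hence not toric and not even rational in general. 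The step that breaks is your item (iii): the number of components of $\mathcal{B}$ on the total space can be strictly smaller than the number of components of $\mathcal{B}_t$, because the monodromy of the family around $0$ may permute the boundary components of the general fiber (in the example, each pair of opposite divisors of $(\pp^1)^n$ is swapped, so $\mathcal{B}$ has $n$ components while $\mathcal{B}_t$ has $2n$). Consequently $|\mathcal{B}_0|$ drops, the complexity of the central fiber is strictly positive, and no amount of bookkeeping with Weil divisor classes will recover equality. Your stated worry (that specialization increases the Picard number) is not the real obstruction; the loss of boundary components to monodromy is.

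The fix --- and this is what the paper does --- is to put the finite cover at the \emph{beginning} rather than at the end. Using the Koll\'ar--Xu construction (Lemma~\ref{lem:finite-cover}), one pulls back along a cyclic base change $t\mapsto t^k$ and a crepant finite Galois cover $f\colon(\mathcal{X}',\mathcal{B}')\to(\mathcal{X},\mathcal{B})$ after which every component of $\mathcal{B}'_t$ extends to a component of $\mathcal{B}'$. Only then does the count $|\mathcal{B}'+\mathcal{X}'_0|\geq n+\rho+1$ go through, giving relative complexity $c_{\{0\}}(\mathcal{X}'/\mathbb{D},\mathcal{B}'+\mathcal{X}'_0)\leq 0$ and hence, by the relative toric characterization of \cite{MS21}, that $(\mathcal{X}',\mathcal{B}'+\mathcal{X}'_0)$ is formally toric near $0$ and $(\mathcal{X}'_0,\mathcal{B}'_0)$ is a toric pair. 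The quotient in the theorem's conclusion is then exactly the Galois group of $f$ acting on the central fiber of the cover. Your adjunction and complexity machinery is the right toolkit, but it must be applied on the monodromy-free cover, not on the original family.
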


In the previous theorem,
the boundary $\mathcal{B}_0$ on the central fiber $\mathcal{X}_0$
is defined via adjunction. 
This means that $\mathcal{B}_0$ is defined via the formula.
\[
K_{\mathcal{X}_0}+\mathcal{B}_0 = (K_{\mathcal{X}}+\mathcal{B}+\mathcal{X}_0)|_{\mathcal{X}_0}.
\]
In Example~\ref{ex:non-toric-central}, 
we show that in general $(\mathcal{X}_0,\mathcal{B}_0)$ is not a toric pair. Indeed, the central fiber $\mathcal{X}_0$ does not need to be a rational variety under the assumptions of Theorem~\ref{thm:toric-deg}.
Our next theorem gives a similar statement for degenerations of cluster type pairs.
We need to impose stronger conditions on the singularities in this case.
In the proof of Theorem~\ref{thm:toric-deg}, we will see that $(\mathcal{X},\mathcal{B})\rightarrow \mathbb{D}$ is indeed a finite quotient of an isotrivial toric family. Thus, in the toric case, there are not many interesting degenerations for which the degeneration of the toric boundary still has lc singularities.
Now, we turn to discuss the case of cluster type pairs.

\begin{theorem}\label{thm:ct-deg}
Let $\pi\colon \mathcal{X}\rightarrow \mathbb{D}$ be a projective Fano fibration.
Let $(\mathcal{X},\mathcal{X}_0+\mathcal{B})$ be a log Calabi--Yau pair of index one over $\mathbb{D}$.
Assume that $(\mathcal{X},\mathcal{X}_0)$ is plt 
and purely terminal on the complement of $\mathcal{B}$.
If $(\mathcal{X}_t,\mathcal{B}_t)$ is of cluster type for $t\in \mathbb{D}^*$, then 
$(\mathcal{X}_0,\mathcal{B}_0)$ is a finite quotient of a cluster type pair.
\end{theorem}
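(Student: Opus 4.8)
The plan is to spread the cluster torus of the general fibre out over a finite cover of the disk, extend it across the origin, and descend. First, by adjunction: since $(\mathcal{X},\mathcal{X}_0)$ is plt and $\pi$ is a fibration, $\mathcal{X}_0$ is normal and irreducible, and adjunction for $(\mathcal{X},\mathcal{X}_0+\mathcal{B})$ along $\mathcal{X}_0$ produces a pair $(\mathcal{X}_0,\mathcal{B}_0)$ with $\mathcal{B}_0=\operatorname{Diff}_{\mathcal{X}_0}(\mathcal{B})$ the different, as in the discussion after Theorem~\ref{thm:toric-deg}. As the total pair has index one, $K_{\mathcal{X}_0}+\mathcal{B}_0\sim 0$; as $\pi$ is Fano, $-K_{\mathcal{X}_0}\sim\mathcal{B}_0$ is ample and $\mathcal{X}_0$ is a Fano variety; and ``purely terminal off $\mathcal{B}$'' localizes to make $(\mathcal{X}_0,\mathcal{B}_0)$ terminal away from $\mathcal{B}_0$. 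It therefore suffices to produce a finite, crepant, equivariant cover $(\widetilde{\mathcal{X}}_0,\widetilde{\mathcal{B}}_0)\to(\mathcal{X}_0,\mathcal{B}_0)$ with $(\widetilde{\mathcal{X}}_0,\widetilde{\mathcal{B}}_0)$ of cluster type.

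Next I would spread out the torus. For $t\in\mathbb{D}^{\ast}$ the hypothesis gives an open $U_t\subseteq\mathcal{X}_t\setminus\mathcal{B}_t$ with $U_t\cong\mathbb{G}_m^n$ and complement of codimension at least two in $\mathcal{X}_t$; here $\mathcal{B}_t$ is exactly the codimension one part of $\mathcal{X}_t\setminus U_t$, so each component of $\mathcal{B}_t$ lies in the zero or polar locus of some coordinate function of $U_t$. After shrinking $\mathbb{D}$, performing a finite base change $\mathbb{D}'\to\mathbb{D}$ ramified only over $0$ that trivializes the monodromy on the character lattices of the $U_t$, and replacing $\mathcal{X}$ by the normalization $\mathcal{X}'$ of $\mathcal{X}\times_{\mathbb{D}}\mathbb{D}'$ (so $(\mathcal{X}',\mathcal{X}'_0+\mathcal{B}')$ is crepant to the base change, of index one, $(\mathcal{X}',\mathcal{X}'_0)$ plt, purely terminal off $\mathcal{B}'$, with $\mathcal{X}'_0$ reduced), these charts glue to an open $\mathcal{U}'\cong\mathbb{G}_m^n\times(\mathbb{D}')^{\ast}\subseteq\mathcal{X}'$. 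Writing $x_1,\dots,x_n$ for the torus coordinates as rational functions on $\mathcal{X}'$, each $\operatorname{div}_{\mathcal{X}'}(x_i)$ is supported on $\Supp(\mathcal{B}')\cup\mathcal{X}'_0$, and after multiplying the $x_i$ by suitable powers of a coordinate $t$ on $\mathbb{D}'$ (with $\operatorname{div}_{\mathcal{X}'}(t)=\mathcal{X}'_0$) we may assume $\mathcal{X}'_0$ does not occur there; then $\varphi':=(x_1,\dots,x_n)$ is a morphism on $\mathcal{X}'\setminus\Supp(\mathcal{B}')$, birational over $\mathbb{D}'$ onto $\mathbb{G}_m^n$, and the $x_i$ restrict to nonzero rational functions on $\mathcal{X}'_0$ defining $\bar\varphi\colon\mathcal{X}'_0\dashrightarrow\mathbb{G}_m^n$.

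The heart of the proof is to extend the cluster torus across the central fibre and recognize the result, possibly after one further finite cover, as a cluster type pair. A valuative comparison — both $\mathcal{X}'_0$ and $\mathbb{G}_m^n\times\{0\}$ are the reduced pullbacks of $0\in\mathbb{D}'$ — shows that $\varphi'$ and $(\varphi')^{-1}$ simultaneously contract $\mathcal{X}'_0$ and $\mathbb{G}_m^n\times\{0\}$, or neither does; in the latter case $\operatorname{ord}_{\mathcal{X}'_0}(t)=1$ forces the induced extension of function fields to be an isomorphism, so $\bar\varphi$ is birational and, by Zariski's main theorem, an open immersion of $\mathcal{X}'_0\setminus\Supp(\mathcal{B}'_0)$ onto an open subset of $\mathbb{G}_m^n$. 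Two points then remain, and each is where the hypotheses on the total space enter: one must rule out the simultaneous contraction, and one must bound the complement of the image in $\mathbb{G}_m^n$, whose codimension one part comes precisely from the divisors along which $\mathcal{X}'$ is singular inside $\mathcal{X}'_0$, that is, from the fractional part of $\mathcal{B}'_0=\operatorname{Diff}_{\mathcal{X}'_0}(\mathcal{B}')$ produced by the quotient singularities of $\mathcal{X}'$ in codimension two along $\mathcal{X}'_0$. Passing to a further finite cover that trivializes those stabilizers, and using plt and purely terminal to see that no divisor is extracted or contracted over the torus together with the Fano and index one hypotheses to run the relevant minimal model program on the total space over $\mathbb{D}'$, one arranges that the chart $\mathbb{G}_m^n\times\mathbb{D}'$ extends as an open subset of the correct codimension across $0$; its restriction to the central fibre is then a cluster type pair $(\widetilde{\mathcal{X}}_0,\widetilde{\mathcal{B}}_0)$, crepant over $(\mathcal{X}'_0,\mathcal{B}'_0)$ and hence over $(\mathcal{X}_0,\mathcal{B}_0)$.

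Finally, $\widetilde{\mathcal{X}}_0\to\mathcal{X}_0$ is the finite morphism assembled along the way — from the base change, the monodromy cover, and the cover trivializing the codimension two stabilizers of $\mathcal{X}$ along $\mathcal{X}_0$ — and $(\mathcal{X}_0,\mathcal{B}_0)$ is the quotient of the cluster type pair $(\widetilde{\mathcal{X}}_0,\widetilde{\mathcal{B}}_0)$ by the corresponding finite group; it is the last of these covers that can make $\mathcal{B}_0$ acquire fractional coefficients, so the conclusion is genuinely a finite quotient of a cluster type pair and not a cluster type pair itself. The step I expect to be the real obstacle is the one in the third paragraph: a priori the cluster tori of the general fibres need not limit to a cluster torus on $\mathcal{X}_0$, and forcing the specialization to behave — making the monodromy and the relevant stabilizers trivial on a finite cover, excluding the contraction of $\mathcal{X}'_0$, and choosing a crepant model of the total space on which the torus chart survives restriction to the central fibre with complement of codimension at least two — is the whole content of the theorem.
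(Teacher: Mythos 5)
Your overall scaffolding (finite base change to kill monodromy, reduce to showing the central fibre of the cover is of cluster type, then descend) matches the paper's, but the core of your argument --- the third paragraph --- is a statement of what needs to be true rather than a proof, and you say so yourself. The gap is real: there is no reason the open torus charts $U_t\subseteq\mathcal{X}_t\setminus\mathcal{B}_t$ vary algebraically in $t$ or that a chosen chart over $(\mathbb{D}')^{\ast}$ extends across $t=0$ with complement of codimension at least two; the closure of $\mathcal{U}'$ could meet $\mathcal{X}'_0$ in a divisorial locus contained in $\Supp(\mathcal{B}'_0)$, or the coordinate functions could degenerate so that $\bar\varphi$ has positive-dimensional fibres, and your claim that the codimension-one part of the complement of the image ``comes precisely from'' the codimension-two singularities of $\mathcal{X}'$ along $\mathcal{X}'_0$ is asserted, not derived. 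The phrase ``run the relevant minimal model program'' does not identify which MMP, on which pair, or why it terminates on a model where the chart survives. Excluding the contraction of $\mathcal{X}'_0$ and bounding the complement is the entire content of the theorem, and the proposal does not supply a mechanism for either.

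The paper avoids working with the torus chart directly. It uses Lemma~\ref{lem:cluster-type-via-dlt}: cluster type is equivalent to having a dlt modification with a crepant birational contraction to a toric log Calabi--Yau pair. After the Koll\'ar--Xu cover (Lemma~\ref{lem:finite-cover}), it arranges an isomorphism $\Cl(\mathcal{X}'/\mathbb{D})\simeq\Cl(\mathcal{X}'_t)$, extracts on a dlt model $\mathcal{Z}$ all non-terminal places of an auxiliary purely terminal klt Calabi--Yau pair, extends the exceptional divisor $E_t$ of the fibrewise toric-model contraction to a divisor $E$ on $\mathcal{Z}$ via the class group isomorphism, and runs a $(K_{\mathcal{Z}}+\Delta_{\mathcal{Z}}+\mathcal{Z}_0+\delta E)$-MMP over $\mathbb{D}$ that contracts exactly $E$. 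The complexity bound $|B_T+T_0|\geq\dim T+\rho(T)$ together with~\cite[Theorem 1]{MS21} then forces the output $(T,B_T+T_0)$ to be toric over $\mathbb{D}$, so its central fibre is toric; the purely terminal hypothesis enters precisely to guarantee that the induced map $\mathcal{Z}_0\dashrightarrow T_0$ extracts no divisors, i.e.\ is a birational contraction, which is what Lemma~\ref{lem:cluster-type-via-dlt} needs. If you want to salvage your approach, you would have to replace the third paragraph with an argument of comparable strength; as written, the proposal identifies the right hypotheses and where they should act, but does not prove the theorem.
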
 

We refer the reader to Definition~\ref{def:pt} for the concept of purely terminal pairs.
In contrast to the toric case, a cluster type pair may have several interesting degenerations for which the degeneration of the cluster type boundary still has lc singularities. In Example~\ref{ex:P^2-toric-model-1}, we show that some toric degenerations of $\pp^2$ can be regarded as cluster type degenerations for different embeddings of algebraic tori 
$\mathbb{G}_m^2 \hookrightarrow \pp^2\setminus C$ where $C$ is a nodal cubic.

\subsection{Degenerations of singular surfaces}

Now, we restrict ourselves to the study of degenerations of singular toric surfaces. 
One of our aims is to understand how the singularities affect the possible degenerations. 
Our third theorem states that almost every weighted projective plane has no interesting degenerations.

\begin{theorem}\label{no-deg-almost}
For almost all well-formed triples $(a,b,c)\in \zz_{\geq 1}^3$
the weighted projective plane $\mathbb{P}(a,b,c)$ has no non-trivial
$\mathbb{Q}$-Gorenstein klt degenerations. 
\label{weighteddeg}
\end{theorem}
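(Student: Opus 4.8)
The plan is to promote an arbitrary $\qq$-Gorenstein klt degeneration of $\pp(a,b,c)$ to a degeneration of the toric \emph{pair} $(\pp(a,b,c),B)$, apply Theorem~\ref{thm:toric-deg} to force the central fiber to be toric, and then extract a Markov-type Diophantine obstruction that rules out non-trivial central fibers for a set of triples of full density. Concretely, I would fix a projective $\qq$-Gorenstein family $\pi\colon\mathcal{X}\to\mathbb{D}$ with $\mathcal{X}_t\simeq\pp(a,b,c)$ for $t\in\mathbb{D}^*$ and $\mathcal{X}_0$ klt, and equip the general fiber with its toric boundary: writing $B_t\subset\pp(a,b,c)$ for the reduced sum of the three torus-invariant divisors, the pair $(\pp(a,b,c),B_t)$ is a toric log Calabi--Yau pair of index one, since $K+B_t\sim 0$. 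Taking $\mathcal{B}$ to be the closure of the relative toric boundary gives a pair $(\mathcal{X},\mathcal{X}_0+\mathcal{B})$ with $K_{\mathcal{X}}+\mathcal{X}_0+\mathcal{B}\sim_{\mathbb{D}}0$ restricting to $(\pp(a,b,c),B_t)$ on general fibers. The decisive point is to arrange that $(\mathcal{X},\mathcal{X}_0)$ is plt and that adjunction yields a log canonical central-fiber pair $(\mathcal{X}_0,\mathcal{B}_0)$; this is exactly the problem of exhibiting a $1$-complement on $\mathcal{X}_0$, where I would invoke the criteria for the existence of $1$-complements on degenerations of toric surfaces established earlier in the paper.

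Granting the complement, Theorem~\ref{thm:toric-deg} applies and shows that $(\mathcal{X}_0,\mathcal{B}_0)$ is a finite quotient of a toric pair; in particular $\mathcal{X}_0$ is a toric klt del Pezzo surface. As the flat limit of the Picard-rank-one surface $\pp(a,b,c)$ in a $\qq$-Gorenstein family, its defining fan is again a Fano triangle, so $\mathcal{X}_0\simeq\pp(x,y,z)$ (possibly up to a finite quotient) for some weights $(x,y,z)$. Since the anticanonical degree is locally constant in the family, the two fibers share the same degree, giving
\[
\frac{(a+b+c)^2}{abc}=\frac{(x+y+z)^2}{xyz}.
\]
Thus any klt degeneration of $\pp(a,b,c)$ is, up to a finite quotient, a \emph{toric} degeneration to another weighted projective plane of the same degree, and the problem becomes purely combinatorial.

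It then remains to classify such toric degenerations and to count the exceptional triples. A $\qq$-Gorenstein toric degeneration between two weighted projective planes of Picard rank one is governed by a combinatorial mutation of the associated Fano triangle, which preserves the degree and links $(a,b,c)$ to $(x,y,z)$ through a generalized Markov equation---exactly as $a^2+b^2+c^2=3abc$ governs the toric degenerations $\pp(a^2,b^2,c^2)$ of $\pp^2$. Hence $\pp(a,b,c)$ admits a non-trivial degeneration only if it admits a non-trivial mutation, i.e. a companion triple $(x,y,z)\neq(a,b,c)$ solving this Diophantine relation subject to well-formedness and the klt constraint. I would then bound the exceptional locus: solutions of a Markov-type equation grow at least exponentially along their mutation trees and therefore form a set of natural density zero inside $\zz_{\geq 1}^3$, which yields the ``almost all'' conclusion and recovers the rigidity of $\pp(a^2,b^2,c^2)$ for Markov triples as a distinguished special case.

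The hard part will be the reduction to the toric setting: making the closure $\mathcal{B}$ of the toric boundary into a genuine $1$-complement---so that $(\mathcal{X},\mathcal{X}_0)$ stays plt and $(\mathcal{X}_0,\mathcal{B}_0)$ is log canonical---is precisely where the complement criteria are indispensable, and it is the step most likely to require delicate case analysis, for instance controlling how the three invariant divisors collide over the central fiber and ruling out higher-Picard-rank toric limits. By contrast, once the degeneration is known to be toric, the Markov-type density estimate is comparatively routine, reducing to the sparsity of solutions of the relevant Diophantine equation.
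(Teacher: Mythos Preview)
Your reduction to a toric picture via a $1$-complement is the right opening move, and you correctly identify that the existence of the $1$-complement is where the technical work lies. But after that point your argument and the paper's diverge, and your endgame has a genuine gap.

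Once the $1$-complement $(\mathcal{X},\mathcal{B}+\mathcal{X}_0)$ is in hand and $\mathcal{B}_t$ is identified with the toric boundary of $\pp(a,b,c)$, the paper does \emph{not} classify possible toric central fibers and then rule them out by a Diophantine density count. Instead it argues directly that the \emph{family itself} is trivial. Using Corollary~\ref{cor:const-sing} and a complexity count, it shows $(\mathcal{X}_0,\mathcal{B}_0)$ is a toric pair with exactly the same three singular points as $\pp(a,b,c)$ and no extra Wahl points. After checking that the three components of $\mathcal{B}$ have trivial monodromy over $\mathbb{D}$, the relative complexity $c_0(\mathcal{X}/\mathbb{D},\mathcal{X}_0+\mathcal{B})$ vanishes, so by \cite[Theorem~1]{MS21} the morphism $(\mathcal{X},\mathcal{X}_0+\mathcal{B})\to\mathbb{D}$ is formally toric near $\{0\}$; since $\mathcal{X}_0$ is reduced, the family is a product. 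The ``almost all'' in the statement comes entirely from the hypotheses needed to run Theorem~\ref{thm:1-comp}: the exceptional set $S$ is the explicit one in Corollary~\ref{weightedlist} together with the triples where some torus-invariant point is smooth or Du Val, and Lemma~\ref{densitylem} computes its density to be zero by elementary counting. No Markov-type equation enters.

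Your proposed endgame, by contrast, is incomplete. You assert that non-trivial toric degenerations of $\pp(a,b,c)$ are governed by combinatorial mutations and that the set of triples admitting a non-trivial mutation has density zero because ``solutions grow exponentially along mutation trees.'' But exponential growth along a single mutation tree says nothing about the density, in $\zz_{\geq 1}^3$, of the set of \emph{initial} triples $(a,b,c)$ whose Fano triangle admits some non-trivial mutation; that is a separate statement you have not established. You would also need to upgrade the conclusion of Theorem~\ref{thm:toric-deg} from ``finite quotient of a toric pair'' to an honest weighted projective plane, and to justify why equality of $K^2$ together with the existence of a degeneration forces a mutation relation. The paper's formally-toric-family argument sidesteps all of this: it never asks which toric surfaces could occur as $\mathcal{X}_0$, because it shows none but $\pp(a,b,c)$ itself can.
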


In the previous theorem, when we write {\em almost all}, we mean that the statement holds up to a subset $S\subsetneq \zz_{\geq 1}^3$ that has density zero with the natural density endowed from $\zz^3$.
We say that a degeneration is {\em trivial} if the central fiber is isomorphic to the general fiber. 
However, in the setting of Theorem~\ref{no-deg-almost}, we will prove something stronger; the family is a product near the origin of the disk.
In order to prove Theorem~\ref{weighteddeg}, we will prove Theorem~\ref{thm:1-comp}, which is a general statement about
the existence of complements for degenerations of singular toric surfaces.
This means that, under some mild conditions, we prove that given a degeneration 
$\mathcal{X}\rightarrow \mathbb{D}$ of a singular toric surface, 
there exists some boundary $\mathcal{B}\in |-K_{\mathcal{X}}|$ for which $(\mathcal{X}_t,\mathcal{B}_t)$ has log canonical singularities for $t$ near $\{0\}\in \mathbb{D}$.
In the setting of Theorem~\ref{weighteddeg}, in most cases, we can argue that $(\mathcal{X}_t,\mathcal{B}_t)$ is toric for every $t$ and so 
the statement is similar to that of Theorem~\ref{thm:toric-deg}, which states that there are no interesting such toric degenerations.
Theorem~\ref{thm:1-comp} is rather technical and depends on some meticulous analysis of basket of singularities.
The idea of using the theory of complements to understand degenerations of del Pezzo surfaces goes back to Hacking and Prokhorov. 
The situation becomes a bit more delicate when we allow the general fiber of the degeneration to have singularities. 
We will argue that for a Markov triple $(a,b,c)\in \zz_{\geq 2}^3$ the triple 
$(a^2,b^2,c^2)$ belongs to the complement of the subset $S\subsetneq \zz_{\geq 0}^3$ of density zero mentioned above.
Thus, we conclude the following corollary.

\begin{corollary}\label{cor:markov-no-deg}
Let $(a,b,c)\in \zz_{\geq 2}^3$ be a Markov triple.
Then, the weighted projective plane $\pp(a^2,b^2,c^2)$ has no non-trivial $\qq$-Gorenstein klt degenerations.
\end{corollary}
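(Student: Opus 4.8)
The plan is to deduce Corollary~\ref{cor:markov-no-deg} directly from Theorem~\ref{weighteddeg} together with an elementary density computation on Markov triples. By Theorem~\ref{weighteddeg}, there is a subset $S\subsetneq \zz_{\geq 1}^3$ of natural density zero such that every well-formed triple outside $S$ gives a weighted projective plane with no non-trivial $\qq$-Gorenstein klt degenerations. So it suffices to show that if $(a,b,c)\in \zz_{\geq 2}^3$ is a Markov triple, then the triple $(a^2,b^2,c^2)$ is well-formed and lies in the complement of $S$. Well-formedness of $(a^2,b^2,c^2)$ follows from the standard fact that in a Markov triple any two entries are coprime: from $a^2+b^2+c^2 = 3abc$, a prime $p$ dividing two of $a,b,c$ would divide the third, and then $p^2 \mid 3abc$ forces a contradiction with the usual descent on Markov triples (or one invokes that all Markov triples are obtained from $(1,1,1)$ by Vieta jumping, which manifestly preserves pairwise coprimality). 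Pairwise coprimality of $a,b,c$ immediately gives pairwise coprimality of $a^2,b^2,c^2$, which is precisely well-formedness for $\pp(a^2,b^2,c^2)$.

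The remaining point is that the set $T := \{(a^2,b^2,c^2) : (a,b,c)\text{ a Markov triple},\ a,b,c\geq 2\}$ is not forced to intersect $S$ — i.e., we need that the particular density-zero set $S$ produced in the proof of Theorem~\ref{weighteddeg} can be chosen to avoid $T$. Here I would appeal to the fact that $T$ is itself extremely sparse: the number of Markov triples with $\max(a,b,c)\leq N$ grows only like $(\log N)^2$, so the number of triples in $T$ with all coordinates $\leq N$ is $O((\log N)^2)$, which has density zero in $\zz^3$. Thus if the proof of Theorem~\ref{weighteddeg} actually establishes the stronger statement that the conclusion holds for every well-formed triple satisfying the explicit arithmetic conditions isolated in the analysis of baskets of singularities (rather than merely ``off some unspecified density-zero set''), then the job is to verify that Markov squares $(a^2,b^2,c^2)$ satisfy those explicit conditions. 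This is the substantive step: one checks that the numerical constraints on the weights $(a,b,c)$ appearing in the proof of Theorem~\ref{thm:1-comp} and Theorem~\ref{weighteddeg} — concerning the orders of the cyclic quotient singularities of $\pp(a^2,b^2,c^2)$ and the existence of a toric $1$-complement with the required log canonicity — are automatically met because the singularities of $\pp(a^2,b^2,c^2)$ at the three torus-fixed points are of type $\frac{1}{a^2}(b^2,c^2)$ etc., whose invariants are governed by the Markov equation.

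Concretely, I would proceed as follows. First, record the well-formedness and pairwise-coprimality facts for Markov squares. Second, recall from the proof of Theorem~\ref{weighteddeg} the precise exceptional set $S$: it is the set of well-formed triples failing a finite list of numerical inequalities controlling the baskets of singularities of the degeneration and the existence of the complement. Third, show that for a Markov triple $(a,b,c)$ with $a,b,c\geq 2$, the triple $(a^2,b^2,c^2)$ satisfies every inequality on that list; here the key input is that the Markov equation $a^2+b^2+c^2=3abc$ pins down the relationship between the three weights tightly enough that the delicate basket estimates go through, and moreover that $\pp(a^2,b^2,c^2)$ carries a toric $1$-complement (its reduced torus-invariant divisor) so that any klt degeneration is governed by Theorem~\ref{thm:toric-deg}, forcing it to be a finite quotient of a toric pair and ultimately a product near the origin. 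Fourth, conclude that $\pp(a^2,b^2,c^2)$ admits no non-trivial $\qq$-Gorenstein klt degeneration. The main obstacle is the third step: one must genuinely track which arithmetic hypotheses the proof of Theorem~\ref{weighteddeg} uses and confirm the Markov equation supplies them, rather than treating Theorem~\ref{weighteddeg} as a pure black box — since ``density zero'' alone does not exclude any individual sparse family without checking the explicit conditions.
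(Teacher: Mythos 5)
Your strategy is exactly the paper's: apply Theorem~\ref{weighteddeg} after checking that the Markov squares $(a^2,b^2,c^2)$ lie outside the explicit exceptional set $S$, and you correctly diagnose the one subtlety — that ``density zero'' alone excludes nothing about a sparse family, so one must verify the concrete arithmetic conditions defining $S$. However, that verification is precisely the entire content of the corollary's proof, and your proposal announces it (``the key input is that the Markov equation \dots pins down the relationship \dots tightly enough that the delicate basket estimates go through'') without carrying it out. What the paper actually checks is short but substantive: (i) since $a,b,c\geq 2$, the three torus-fixed points of $\pp(a^2,b^2,c^2)$ are singular of orbifold indices $a^2,b^2,c^2$ with minimal log discrepancies $\tfrac1a,\tfrac1b,\tfrac1c$ (they are Wahl points, not Du Val points and not smooth points, so the triple avoids the family $\mathcal{A}$ of Lemma~\ref{densitylem}); (ii) the classification of Markov triples forces $\max(a,b,c)\geq 29$ once all entries are $\geq 2$ (the smallest such triple being $(2,5,29)$), whence ${\rm mld}(\pp(a^2,b^2,c^2))<\tfrac16$, so the ${\rm mld}\geq\tfrac16$ exception of Corollary~\ref{weightedlist} does not apply; and (iii) none of the families $\mathcal{B}_1,\mathcal{B}_2,\mathcal{B}_3$ of Lemma~\ref{densitylem} contains a triple of the form $(a^2,b^2,c^2)$ with $(a,b,c)$ Markov. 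Your proposal contains none of these three checks, so as written it does not yet prove the statement.

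Two smaller remarks. The paragraph estimating that the set $T$ of Markov squares has density zero ($O((\log N)^2)$ triples up to $N$) is a red herring: as you yourself observe, the density of $T$ is irrelevant, since a density-zero $S$ could still contain all of $T$; only the explicit description of $S$ matters. And the final appeal to Theorem~\ref{thm:toric-deg} is not needed and not quite the right citation: the rigidity conclusion (the family is a product near $0\in\mathbb{D}$) is already built into the proof of Theorem~\ref{weighteddeg} via the relative complexity bound and \cite[Theorem 1.1]{MS21}, once the $1$-complement is known to restrict to the toric boundary on the general fiber; Theorem~\ref{thm:toric-deg} assumes a log Calabi--Yau structure on the total family from the outset, which is what Theorem~\ref{thm:1-comp} is used to produce.
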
 

We note that Corollary~\ref{cor:markov-no-deg} can also be concluded from the work of Hacking and Prokhorov~\cite{HP10}. Indeed, every iterated degeneration of $\mathbb{P}(a^2,b^2,c^2)$ is indeed a degeneration of $\mathbb{P}^2$.
In upcoming work~\cite{Zun25}, the second author will prove 
some structural theorems about $\qq$-Gorenstein klt degenerations
of Hirzebruch surfaces. 
This will aim to finish the classification of $\qq$-Gorenstein
klt degenerations of minimal smooth rational surfaces. 
This motivates us to pay particular attention to the weighted projective plane 
$\mathbb{P}(1,1,n)$. 
In this direction, using the tools introduced above, we can give
a complete classification of klt degenrations of $\pp(1,1,n)$ with $n\geq 3$.

\begin{theorem}\label{thm:1-1-n}
Let $\mathcal{X}\rightarrow \mathbb{D}$ be a klt Fano degeneration of $\pp(1,1,n)$ with $n \geq 3$, 
then  for $\mathcal{X}_0$ one of the following holds:
\begin{enumerate}
\item $\mathcal{X}_0$ is a weighted projective plane, or 
\item $\mathcal{X}_0$ is a $\mathbb{G}_m$-surface which is not toric. 
\end{enumerate}
Furthermore, in the second case $\mathcal{X}_0$ is a $\qq$-Gorenstein deformation of a weighted projective plane.
\end{theorem}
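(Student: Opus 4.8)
The plan is to combine the general criterion for $1$-complements on degenerations of toric surfaces (Theorem~\ref{thm:1-comp}) with the structural results on cluster type degenerations (Theorems~\ref{thm:toric-deg} and~\ref{thm:ct-deg}). First I would observe that $\pp(1,1,n)$ is a toric surface, hence cluster type, and that its reduced torus-invariant boundary $B$ is a $1$-complement; in fact $(\pp(1,1,n),B)$ is a toric log Calabi--Yau pair. Since $\mathcal{X}\to\mathbb{D}$ is a klt Fano degeneration, after a base change and running a suitable MMP I would produce a relative $1$-complement $\mathcal{B}$ on $\mathcal{X}$ restricting to a torus-invariant boundary on the general fiber, using Theorem~\ref{thm:1-comp} to control the singularities of $(\mathcal{X}_t,\mathcal{B}_t)$ near $0$. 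The point is that the general fiber's complement $B_t$ is the toric boundary, a nodal cubic-like configuration (two lines meeting the section, or a cuspidal structure depending on $n$), and one must track whether this configuration stays log canonical in the limit.

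The next step is a dichotomy according to whether $(\mathcal{X},\mathcal{X}_0)$ is plt along the whole central fiber. If it is plt, then $(\mathcal{X},\mathcal{X}_0+\mathcal{B})$ is a log Calabi--Yau pair over $\mathbb{D}$ with $(\mathcal{X}_t,\mathcal{B}_t)$ toric for $t\ne 0$, so Theorem~\ref{thm:toric-deg} applies and $(\mathcal{X}_0,\mathcal{B}_0)$ is a finite quotient of a toric pair. Since $\mathcal{X}_0$ is a klt Fano surface of Picard rank one that is a finite quotient of a toric surface, I would argue it is itself a toric surface, i.e.\ a weighted projective plane — this is case (1). If instead plt fails somewhere on $\mathcal{X}_0$, then the limit of the toric boundary acquires worse-than-lc singularities; here I would instead invoke the cluster type picture: $(\pp(1,1,n),C)$ is of cluster type for a suitable non-torus-invariant anticanonical $C$ (a nodal or cuspidal rational curve, as in Example~\ref{ex:P^2-toric-model-1} adapted to $\pp(1,1,n)$), and Theorem~\ref{thm:ct-deg} — whose hypotheses (index one, plt, purely terminal off $\mathcal{B}$) I would verify after possibly replacing $\mathcal{B}$ — gives that $(\mathcal{X}_0,\mathcal{B}_0)$ is a finite quotient of a cluster type pair. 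A $\mathbb{G}_m$-action then survives on $\mathcal{X}_0$: the cluster torus $\mathbb{G}_m^2$ degenerates, but a one-dimensional subtorus acts on the limit, giving case (2), a non-toric $\mathbb{G}_m$-surface. The classification of such $\mathbb{G}_m$-surfaces of Picard rank one (e.g.\ via the combinatorial data of Hausen et al.) shows each is a $\qq$-Gorenstein deformation of a weighted projective plane, which gives the final sentence.

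Concretely, the steps in order are: (i) exhibit the toric $1$-complement on $\pp(1,1,n)$ and extend it relatively over $\mathbb{D}$ using Theorem~\ref{thm:1-comp}; (ii) split into the plt and non-plt cases on the central fiber; (iii) in the plt case apply Theorem~\ref{thm:toric-deg} and upgrade ``finite quotient of toric, Picard rank one, Fano'' to ``weighted projective plane''; (iv) in the non-plt case switch to a cluster type anticanonical complement, verify the hypotheses of Theorem~\ref{thm:ct-deg}, and conclude $\mathcal{X}_0$ is a finite quotient of a cluster type surface carrying a $\mathbb{G}_m$-action; (v) classify the resulting rank-one $\mathbb{G}_m$-surfaces and identify each as a $\qq$-Gorenstein deformation of a weighted projective plane.

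I expect the main obstacle to be step (i) together with the verification of the purely terminal hypothesis in step (iv): producing a relative $1$-complement whose restriction to $\mathcal{X}_0$ remains log canonical requires the delicate basket-of-singularities analysis underlying Theorem~\ref{thm:1-comp}, and—when plt fails—one must carefully choose the cluster type boundary $C$ on $\pp(1,1,n)$ (and its extension over $\mathbb{D}$) so that the degeneration is purely terminal away from $\mathcal{B}$, which is exactly the hypothesis that makes Theorem~\ref{thm:ct-deg} applicable; controlling this while keeping $\mathcal{X}_0$ Fano of Picard rank one is the technical heart of the argument.
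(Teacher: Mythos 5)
There are genuine gaps in this proposal, concentrated in steps (i) and (iv). For step (i), you cannot invoke Theorem~\ref{thm:1-comp} directly for $\pp(1,1,n)$: its hypotheses require ${\rm mld}(\mathcal{X}_t)<\tfrac{1}{6}$ and that $\mathcal{X}_t$ avoid the baskets $\mathcal{F}_1,\dots,\mathcal{F}_4,\mathcal{D}$, whereas the unique singular point $\tfrac{1}{n}(1,1)$ of $\pp(1,1,n)$ has minimal resolution $[n]$, which lies in $\mathcal{F}_{n-1}$ for $n=3,\dots,6$, and has ${\rm mld}=\tfrac{2}{n}\geq\tfrac16$ for all $n\leq 12$. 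The paper therefore does \emph{not} apply Theorem~\ref{thm:1-comp} as a black box; it reruns the steps of that proof in Proposition~\ref{prop:1-comp 1-1-n}, using the specific structure of degenerations of $\mathbb{F}_n$ (Lemma~\ref{lem:Hz-minres}) to show that the contracted extremal curve $C$ meets $E_0$ in one point and that $(\mathcal{Y}_0',E_0')$ is plt with at most two singular points on $E_0'$, which is what makes Step 5 of that argument go through. Your dichotomy in step (ii) is also misframed: by the paper's definition a klt degeneration already has $(\mathcal{X},\mathcal{X}_0)$ plt, so the case split you describe is vacuous; the operative dichotomy is whether the central fiber is toric, which the paper detects via the number of singular points on the $1$-complement and the complexity.

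The most serious gap is in step (iv): being of cluster type does \emph{not} imply the existence of a regular $\mathbb{G}_m$-action. Cluster type is a birational condition (an embedding of $\mathbb{G}_m^2$ in codimension one), and a finite quotient of a cluster type pair carries no torus action in general --- e.g.\ del Pezzo surfaces of degree $2$ are cluster type but generically admit no nontrivial $\mathbb{G}_m$-action. So the sentence ``a one-dimensional subtorus acts on the limit'' has no justification and is where your argument for case (2) breaks. The paper instead obtains the $\mathbb{G}_m$-action from Prokhorov's result \cite[Proposition 6.1]{P15}, applied using the existence of the $1$-complement together with the fact (Proposition~\ref{prop:sing-limits_1-1-n}) that a non-toric $\mathcal{X}_0$ has exactly two non-Gorenstein points. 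Two further steps you leave as assertions also require real work: upgrading ``finite quotient of a toric surface of Picard rank one'' to ``weighted projective plane'' is done in the paper via the lattice-index computation and the Markov-type equation $x^2+y^2+n=(n+2)xy$ (Lemma~\ref{lem:mutations}, Proposition~\ref{prop:tor-deg}); and the final deformation statement is proved constructively by a ``right slide'' of a $(-1)$-curve (Theorem~\ref{thm:partialsmoothing}), not by citing a classification of rank-one $\mathbb{G}_m$-surfaces, which is not available off the shelf for degenerations of $\pp(1,1,n)$.
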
 

In Proposition~\ref{prop:tor-deg}, 
we give a explicit classification
of the weighted projective planes
which are $\qq$-Gorenstein klt degenerations of $\pp(1,1,n)$ with $n\geq 3$.

The paper is organized as follows.
In Section~\ref{sec:prelims}, we write some preliminary results regarding cluster type pairs, theory of complements, dual complexes, T-singularities, and Wahl singularities. 
In Section~\ref{sec:degen-cluster-type}, we prove Theorem~\ref{thm:toric-deg} and Theorem~\ref{thm:ct-deg} regarding degenerations of toric pairs as well as cluster type pairs. 
In Section~\ref{sec:complements-degen-klt-surfaces}, we prove some general statements regarding the existence of complements for degenerations of singular toric surfaces of Picard rank one. In this section, we also prove Theorem~\ref{weighteddeg} regarding the degenerations of weighted projective planes $\mathbb{P}(a,b,c)$. 
In Section~\ref{sec:degen-wps}, we classify the degenerations of weighted projective planes $\pp(1,1,n)$ with $n\geq 3$. 
Finally, in Section~\ref{sec:ex-and-quest}, we give some examples and propose some questions for further research.

\subsection*{Acknowledgements}

The authors would like to thank Audric Lebovitz, Tomoki Oda, Giancarlo Urzúa, and Jos\'e Ignacio Y\'a\~nez
for many discussions related to this article.

\section{Preliminaries}
\label{sec:prelims}

We work over the field of complex numbers. 
All the degenerations considered in this paper are over a disk $\mathbb{D}$ and are assumed to be isotrivial over $\mathbb{D}^*:=\mathbb{D}-\{0\}$ unless otherwise stated.
All our degenerations have reduced central fiber unless otherwise stated.
We say that a degeneration $\mathcal{X}\rightarrow \mathbb{D}$ is {\em trivial} if $\mathcal{X}_t\simeq \mathcal{X}_0$.
In this section, we collect several preliminary results
as well as definitions related to
singularities of pairs, 
cluster type pairs, complements, 
dual complexes, T-singularities, and Wahl singularities. 

\subsection{Singularities of pairs} 
In this subsection, we recall some basic definitions of singularities of pairs. 
For the standard definitions of singularities of pairs, for instance, klt and log canonical, we refer the reader to~\cite{KM98,Kol13}. Here, we review some concepts that may not be standard for the reader. We consider pairs $(X,B)$ with rational coefficients, i.e., $B$ is a $\qq$-divisor. Given a pair $(X,B)$ and a divisor $E$ over $X$, we write $a_E(X,B)$ for the log discrepancy of $(X,B)$ with respect to $E$. 

\begin{definition}
{\em 
A pair $(X,B)$ is said to be {\em purely log terminal} or {\em plt} for short if the pair $(X,B)$ is log canonical
and $a_E(X,B)>0$ for every divisor $E$ over $X$ which is exceptional over $X$. 
}
\end{definition}

Note that klt pairs are plt. However, there are some plt pairs which are not klt as $(\mathbb{A}^2, H_0)$ where $H_0$ is a hyperplane in $\mathbb{A}^2$. 

\begin{definition}\label{def:pt}
{\em 
A pair $(X,B)$ is said to be {\em purely terminal} if the following conditions are satisfied:
\begin{enumerate}
\item the pair $(X,B)$ is plt; and 
\item for every prime component $S\subset \lfloor B\rfloor$ the pair $(S,B_S)$, obtained from adjunction of $(X,B)$ to $S$, is terminal. 
\end{enumerate} 
}
\end{definition}

The previous definition is not standard in the literature. However, it is sensible as it plays a similar role to that of plt pairs.
Indeed, a pair $(X,B)$ is purely log terminal if and only if for every prime component $S\subset \lfloor B\rfloor$ the pair $(S,B_S)$, obtained from adjunction of $(X,B)$ to $S$, is klt\footnote{klt singularities are also known as log terminal singularities.}. 

\begin{definition}
{\em 
A {\em klt degeneration} is a projective morphism 
$\pi\colon \mathcal{X}\rightarrow \mathbb{D}$ such that the pair $(\mathcal{X},\mathcal{X}_0)$ is plt
and $\pi^*\{0\}=\mathcal{X}_0$, i.e, the central fiber is reduced.
}
\end{definition} 

If $\mathcal{X}\rightarrow \mathbb{D}$ is a klt degeneration, then the central fiber $\mathcal{X}_0$ is klt and the equality 
\[
(K_{\mathcal{X}}+\mathcal{X}_0)|_{\mathcal{X}_0}\sim_\qq K_{\mathcal{X}_0} 
\]
holds. Indeed, as $\pi^*\{0\}=\mathcal{X}_0$ the variety $\mathcal{X}$ does not have singularities of codimension two along $\mathcal{X}_0$.

\begin{definition}
{\em 
Let $X\rightarrow Z$ be a fibration. 
A {\em log Calabi--Yau pair} over $Z$ is a pair $(X,B)$ which has log canonical singularities
for which $K_X+B\sim_{\qq,Z} 0$.
We say that $(X,B)$ is a log Calabi--Yau over
}
\end{definition}

\subsection{Cluster type pairs}
In this subsection, we recall the definition of cluster type pairs and state some basic results about cluster type pairs. We refer the reader to~\cite{EFM24,AdSFM24,JM24,MY24,ELY25} for further results about cluster type pairs.

\begin{definition}\label{def:ct}
{\em 
Let $(X,B)$ be log Calabi--Yau pair of dimension $n$. 
We say that $(X,B)$ is of {\em cluster type}
or that $(X,B)$ is a {\em cluster type pair} if there exists an embedding 
in codimension one 
\[
\mathbb{G}_m^n \dashrightarrow X\setminus B, 
\]
i.e., there exists a closed subset of codimension at least two $Z\subset \mathbb{G}_m^n$ 
and an embedding $\mathbb{G}_m^n\setminus Z \hookrightarrow X\setminus B$.
}
\end{definition}

If $(X,B)$ is a cluster type pair, then we have $K_X+B\sim 0$.
Indeed, if $(X,B)$ is of cluster type, then it is crepant birational equivalent to a toric pair so $K_X+B\sim 0$ holds by~\cite{FMM25}
Furthermore, if $(X,B)$ is of cluster type, then $X\setminus B$ is covered, 
up to a subset of codimension at least two, by images of codimension one embeddings
$\mathbb{G}_m^n \dashrightarrow X\setminus B$ (see~\cite[Theorem 1.3.(3)]{EFM24}).
The following two lemmas are proved in previous works. 
See for instance~\cite[Lemma 2.25 and Lemma 2.30]{JM24}.                             

\begin{lemma}\label{lem:cluster-type-ascends}
Let $(X,B)$ be a cluster type pair. 
Let $\phi \colon Y \dashrightarrow X$ be a projective birational map
that only extracts log canonical places of $(X,B)$.
Let $(Y,B_Y)$ be the log pull-back of $(X,B)$ to $Y$.
Then, the pair $(Y,B_Y)$ is of cluster type. 
\end{lemma}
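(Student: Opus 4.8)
\textbf{Proof proposal for Lemma~\ref{lem:cluster-type-ascends}.}

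The plan is to transport the codimension-one torus embedding along $\phi$ and check that it still realizes $(Y,B_Y)$ as a cluster type pair. First I would record the set-up: since $(X,B)$ is of cluster type, there is a closed subset $Z\subset \G_m^n$ of codimension at least two and an open immersion $j\colon \G_m^n\setminus Z \hookrightarrow X\setminus B$. Because $\phi\colon Y\dashrightarrow X$ is birational, it restricts to an isomorphism over the complements of closed subsets of codimension at least one on both sides; the key point is to improve ``codimension at least one'' to ``codimension at least two'' using the hypothesis that $\phi$ only extracts log canonical places of $(X,B)$.

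The heart of the argument is the following observation. Let $Y \dashrightarrow X$ have exceptional divisors $E_1,\dots,E_r$ on (some resolution, or directly on) $Y$. By assumption each $E_i$ is a log canonical place of $(X,B)$, i.e. $a_{E_i}(X,B)=0$. Writing $\phi_* $ for the birational transform, the log pull-back formula gives $K_Y+B_Y = \phi^*(K_X+B)$ with $B_Y = \phi_*^{-1}B + \sum_i E_i$ (each extracted log canonical place appears with coefficient one since $a_{E_i}(X,B)=0$). In particular every $\phi$-exceptional divisor on $Y$ is a component of $\lfloor B_Y\rfloor$, hence lies inside $\Supp B_Y$. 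Therefore $Y\setminus B_Y$ maps isomorphically onto its image in $X$, and the complement of that image in $X\setminus B$ has codimension at least two (it is contained in the image of the indeterminacy locus of $\phi^{-1}$, which is a proper closed subset not containing any divisor of $X\setminus B$, since all such divisors would have to be $\phi^{-1}$-exceptional, i.e. $\phi$-extracted, i.e. inside $B_Y$). Composing, we get an open immersion
\[
\G_m^n \setminus Z' \;\hookrightarrow\; Y\setminus B_Y,
\]
where $Z' = Z \cup j^{-1}(W)$ and $W\subset X\setminus B$ is the (codimension $\geq 2$) locus over which $\phi^{-1}$ is not defined or not an isomorphism; since $j$ is an open immersion, $j^{-1}(W)$ still has codimension at least two in $\G_m^n$, and hence so does $Z'$. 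This is exactly a codimension-one torus embedding witnessing that $(Y,B_Y)$ is of cluster type.

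The main obstacle I anticipate is the bookkeeping around ``codimension at least two'': one must be careful that no divisorial component of $X\setminus B$ is contracted by $\phi^{-1}$ and that no divisorial component of $Y\setminus B_Y$ is contracted by $\phi$, and both of these follow precisely from the fact that the $\phi$-exceptional divisors are log canonical places, hence boundary components of $B_Y$ with coefficient one. A secondary technical point is to justify that $K_Y + B_Y \sim 0$ and that $(Y,B_Y)$ is log canonical, so that it is genuinely a log Calabi--Yau pair; this is immediate from $K_Y+B_Y=\phi^*(K_X+B)$, crepant birational invariance of the linear equivalence class (as recalled after Definition~\ref{def:ct}), and the fact that crepant pullbacks preserve the log canonical property. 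Once these are in place the lemma follows. This mirrors the standard argument that the cluster type property is a crepant-birational invariant, specialized to the one-directional case of extracting log canonical places; an alternative, essentially equivalent, route is to invoke~\cite[Theorem 1.3]{EFM24} directly, but the hands-on argument above is short enough to give in full.
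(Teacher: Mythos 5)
The paper does not actually prove this lemma: it is quoted from earlier work, with the reference \cite[Lemma 2.25 and Lemma 2.30]{JM24}, so there is no in-paper argument to compare yours against. Your direct proof is essentially the standard one and is correct in substance: the decisive observation that every extracted divisor is an lc place, hence appears in $\lfloor B_Y\rfloor$ with coefficient one, is exactly what makes the torus chart transport across $\phi$ with only codimension-two losses, and your verification that $(Y,B_Y)$ is lc with $K_Y+B_Y\sim 0$ via crepancy is the right way to close the loop.

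Two points deserve tightening. First, your claim that ``no divisorial component of $X\setminus B$ is contracted by $\phi^{-1}$'' does \emph{not} follow from the stated hypothesis: ``only extracts log canonical places'' constrains the divisors on $Y$ that are exceptional over $X$, and says nothing about prime divisors of $X$ that fail to survive on $Y$. If such a divisor $D\subset X\setminus\Supp B$ existed, $j^{-1}(D)$ would be a divisor of $\G_m^n$ on which the composite $\G_m^n\dashrightarrow Y$ drops dimension, and your $Z'$ would acquire a codimension-one component. This is harmless in practice because the lemma is only ever applied to projective birational \emph{morphisms} $Y\to X$ (dlt modifications and divisorial extractions), for which every divisor of $X$ has a divisorial strict transform on $Y$; but you should state explicitly that you are reading the hypothesis as also forbidding $\phi^{-1}$ from contracting divisors, rather than deriving it from the lc-place condition. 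Second, the assertion that ``$Y\setminus B_Y$ maps isomorphically onto its image'' is slightly too strong, since $\operatorname{Exc}(\phi)$ may have components of codimension at least two not contained in $\Supp B_Y$; what you actually have, and all you need, is an isomorphism after deleting a further closed subset of codimension at least two on each side, which your bookkeeping with $W$ already accommodates.
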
 

\begin{lemma}\label{lem:cluster-type-via-dlt}
A pair $(X,B)$ is of cluster type if and only if there exists 
a dlt modification $(Y,B_Y)\rightarrow (X,B)$
and a crepant birational contraction $(Y,B_Y)\dashrightarrow (T,B_T)$
to a toric log Calabi--Yau pair.
\end{lemma}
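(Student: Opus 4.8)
The plan is to prove the two implications separately, in each case translating between the dense torus inside a toric model and the open locus $X\setminus B$.

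For the ``if'' direction, suppose we are given a dlt modification $f\colon(Y,B_Y)\to(X,B)$ and a crepant birational contraction $h\colon(Y,B_Y)\dashrightarrow(T,B_T)$ with $(T,B_T)$ a toric log Calabi--Yau pair; write $\TT:=T\setminus B_T\cong\G_m^n$ for the open torus. Since $h$ is a birational contraction, $h^{-1}$ extracts no divisors, so it is defined on $T$ outside a closed subset of codimension $\geq 2$; restricting to $\TT$ and then to the open subset where $h$ is a local isomorphism onto its image, we get an open immersion $\TT\setminus Z_0\hookrightarrow Y$ with $\codim_{\TT}Z_0\geq 2$. I would then check this lands in $Y\setminus\Supp(B_Y)$: every component of $B_Y$ is an lc place of $(Y,B_Y)$, hence of $(T,B_T)$ by crepancy, so its center on $T$ lies in $\Supp(B_T)$ and is disjoint from $\TT$; as $(T,B_T)$ is smooth with empty boundary near any point of $\TT$, no such divisor meets the locus where $h^{-1}$ is a local isomorphism onto $\TT$. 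Finally, because $f$ is a dlt modification, every $f$-exceptional divisor appears in $B_Y$, so the divisorial part of the exceptional locus of $f$ is contained in $\Supp(B_Y)$; hence $f$ restricts to an open immersion of $Y\setminus\Supp(B_Y)$, minus a codimension $\geq 2$ set, into $X\setminus\Supp(B)$. Composing yields an open embedding of $\G_m^n$ minus a codimension $\geq 2$ subset into $X\setminus B$, so $(X,B)$ is of cluster type.

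For the ``only if'' direction, assume $(X,B)$ is of cluster type. Recall that a cluster type pair is crepant birational to a toric log Calabi--Yau pair, so fix such a crepant birational map $(X,B)\dashrightarrow(T,B_T)$, which restricts to a birational map $\G_m^n\dashrightarrow T\setminus B_T$ on the open tori. The crucial point is to produce a dlt modification of $(X,B)$ through which the map to $T$ becomes a birational contraction. I would choose a log resolution $W\to X$ dominating $T$ on which the finitely many torus-invariant prime divisors of $T$ all appear as divisors, and then run the usual dlt-modification minimal model program over $X$, contracting exactly the $f$-exceptional divisors of positive log discrepancy (which are precisely the divisors over $X$ that are not lc places). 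This produces a $\qq$-factorial dlt modification $f\colon(Y,B_Y)\to(X,B)$ that still carries all those invariant divisors of $T$: each invariant component of $B_T$ has log discrepancy $0$ for $(T,B_T)$, hence for $(X,B)$ by crepancy, so it is either a component of $\lfloor B\rfloor$ (hence has divisorial center on $X$) or exceptional over $X$ and not contracted by the program. It then remains to verify that the induced crepant birational map $h\colon(Y,B_Y)\dashrightarrow(T,B_T)$ is a contraction, i.e.\ that every prime divisor of $T$ is dominated by a prime divisor of $Y$: the torus-invariant ones are, by construction, and a non-invariant prime divisor of $T$ meets $T\setminus B_T$, on which the map to $X$ is birational, so it maps onto a prime divisor of $X$ whose strict transform lies on $Y$. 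Invoking Lemma~\ref{lem:cluster-type-ascends} (or just crepancy) keeps $(Y,B_Y)$ of cluster type, and we have the required dlt modification together with a crepant birational contraction to the toric log Calabi--Yau pair $(T,B_T)$.

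The main obstacle is the ``only if'' direction: it is routine to make $(X,B)$ crepant birational to a toric pair and to take \emph{some} dlt modification, but one must arrange that the resulting map to the toric model extracts no divisors. This forces building the dlt modification so that it already extracts precisely the lc places corresponding to the torus-invariant divisors of the toric model, which is where the freedom in choosing the log resolution---together with the fact that crepant equivalence preserves lc places---is essential. A secondary technical point, used in both directions, is the standard fact that a birational contraction, and the inverse of a birational morphism, are isomorphisms in codimension one over suitable open sets, which is what allows one to transport the torus through $Y$ without losing control of codimension.
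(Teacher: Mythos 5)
The paper does not actually prove this lemma: it is quoted from \cite[Lemmas 2.25 and 2.30]{JM24}, so there is no internal proof to compare your argument against. On its own merits, your proof is essentially correct and is the expected argument. The ``if'' direction works: a birational contraction $h$ is an isomorphism over the complement of a codimension-two subset of $T$, crepancy forces every component of $B_Y$ (all of which have coefficient $0$ or $1$, since log discrepancies over a toric log Calabi--Yau pair are non-negative integers --- a point worth stating, as you assert that every component of $B_Y$ is an lc place) to have center inside $\Supp(B_T)$, and the divisorial exceptional locus of the dlt modification lies in $\lfloor B_Y\rfloor$, so the big open torus transports into $X\setminus B$. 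The ``only if'' direction is also correct granted the fact you ``recall'', namely that a cluster type pair is crepant birational to a toric log Calabi--Yau pair; but be aware that this is the genuinely nontrivial input of the whole equivalence (it requires the negativity-lemma argument showing that every invariant divisor of a toric compactification of the embedded torus is an lc place of $(X,B)$), and your proof is not self-contained without it --- though the paper itself treats it as known, citing \cite{EFM24} and \cite{FMM25}, so importing it is defensible. Given that input, your construction --- take a log resolution on which the finitely many invariant divisors of $T$ appear, run the dlt-modification MMP over $X$, note that this MMP only contracts exceptional divisors of positive log discrepancy so the lc places survive, and observe that every non-invariant prime divisor of $T$ meets the torus and hence is already the strict transform of a divisor of $X$ --- correctly yields the required crepant birational contraction.
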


\subsection{Theory of complements} 
In this subsection, we recall the definition of a complement.

\begin{definition}
{\em 
Let $X\rightarrow Z$ be a projective fibration. 
We say that an effective divisor $B$ on $X$ is an {\em $m$-complement over $Z$}
for $X$ over $Z$ if the following conditions are satisfied:
\begin{enumerate}
\item the pair $(X,B)$ is log canonical; and 
\item we have $m(K_X+B)\sim_Z 0$. 
\end{enumerate} 
In other words, a $m$-complement is the boundary $B$ which gives a log Calabi--Yau pair $(X,B)$ of index $m$ over $Z$, i.e., $m(K_X+B)\sim_Z 0$. 
Whenever the base $Z$ is clear from the context, we may just say that $B$ is an $m$-complement of $X$. 
If we fix a closed point $z\in Z$, we may say that $B$ is an {\em $m$-complement over $z\in Z$} if conditions (1) and (2) above hold over a neighborhood of $z\in Z$.
}
\end{definition} 

\subsection{Dual complexes} 
In this subsection, we recall the definition of dual complex and state a lemma regarding dual complexes of log Calabi--Yau pairs. 

\begin{definition}
{\em  
Let $(Y,B_Y)$ be a dlt pair with
$\lfloor B_Y\rfloor =\sum_{i\in I} B_i$ its prime decomposition.
The {\em dual complex} of $(Y,B_Y)$, denoted by $\mathcal{D}(B_Y)$, 
is defined to be $\triangle$-complex whose vertices $v_i, i\in I$ correspond
to $B_i, i\in I$ and whose $k$-cells correspond to strata of $\lfloor B_Y\rfloor$, i.e., 
connected components of $B_{i_0}\cap \dots \cap B_{i_k}$. 
}
\end{definition} 

\begin{definition}
{\em 
Let $(X,B)$ be a log Calabi--Yau pair.
The {\em dual complex} of $(X,B)$, denoted by $\mathcal{D}(X,B)$, 
is defined to be $\mathcal{D}(B_Y)$ where $(Y,B_Y)\rightarrow (X,B)$ is a dlt modification.
}
\end{definition} 

A priori, the dual complex $\mathcal{D}(X,B)$ depends on the choice of a dlt modification.
However, the simple-homotopy equivalence class of $\mathcal{D}(X,B)$ is independent
of the chosen dlt modification (see, e.g.,~\cite[Theorem 3]{dFKX}).
In particular, the PL-homotopy class of the dual complex is independent of the 
crepant birational model of the log Calabi--Yau pair.
Thus, for a cluster type pair $(X,B)$ of dimension $n$, we have 
$\mathcal{D}(X,B)\simeq_{\rm PL} \mathbb{S}^{n-1}$.
The following lemma follows from~\cite[Lemma 29 and Lemma 30]{KX16}.

\begin{lemma}\label{lem:finite-cover}
Let $(\mathcal{X},\mathcal{B})\rightarrow Z$ be a log Calabi--Yau fibration. Then, there exists a commutative diagram 
\[
\xymatrix{
(\mathcal{X},\mathcal{B})\ar[d]_-{\pi} & (\mathcal{X}',\mathcal{B}')\ar[d]^-{\pi'}\ar[l]_-{f} \\ 
Z & Z' \ar[l]_-{f_Z}
}
\]
where the following conditions are satisfied: 
\begin{enumerate} 
\item $f_Z$ is a finite Galois morphism; 
\item $f$ is a crepant finite Galois morphism; 
\item $\pi$ and $\pi'$ are log Calabi--Yau fibrations; and 
\item $\mathcal{D}(\mathcal{X}'_\eta,\mathcal{B}'_\eta)=
\mathcal{D}(\mathcal{X}'_t,\mathcal{B}'_t)$ holds for $\eta$ the generic point of $Z'$ and $t\in Z'$ a general closed point. 
\end{enumerate} 
\end{lemma}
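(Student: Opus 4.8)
The plan is to deduce the statement from~\cite[Lemma 29 and Lemma 30]{KX16}, whose combined content is essentially the construction of such a diagram; I outline how I would assemble it. The first step is to \emph{rigidify a dlt model of the generic fibre by a base change}. Fix a dlt modification $g_\eta\colon(\mathcal{Y}_\eta,\Gamma_\eta)\to(\mathcal{X}_\eta,\mathcal{B}_\eta)$ of the fibre over the generic point $\eta\in Z$. The morphism $g_\eta$ and the pair $(\mathcal{Y}_\eta,\Gamma_\eta)$ are defined over some finite extension of the function field of $Z$; replacing this extension by its Galois closure produces a finite Galois morphism $f_Z\colon Z'\to Z$, which will be the base map of condition (1), together with a dlt model defined over the function field of $Z'$.

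Next I would \emph{build $\mathcal{X}'$, $\mathcal{B}'$ and the cover $f$}. Let $\mathcal{X}'$ be the normalization of $\mathcal{X}\times_Z Z'$, with the induced finite morphism $f\colon\mathcal{X}'\to\mathcal{X}$ and fibration $\pi'\colon\mathcal{X}'\to Z'$, and let $\mathcal{B}'$ be determined by the crepant pull-back formula $K_{\mathcal{X}'}+\mathcal{B}'=f^{*}(K_{\mathcal{X}}+\mathcal{B})$ (near the special point one lets the pair also contain the reduced central fibre, which makes this pull-back effective; this bookkeeping does not affect anything else). Since $f$ is finite and $(\mathcal{X},\mathcal{B})$ is log canonical, $(\mathcal{X}',\mathcal{B}')$ is again log canonical, $f$ is crepant, and $K_{\mathcal{X}'}+\mathcal{B}'\sim_{\qq,Z'}0$, so both $\pi$ and $\pi'$ are log Calabi--Yau fibrations; this gives conditions (2) and (3) except for the Galois property. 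The Galois group $G$ of $Z'/Z$ acts on $\mathcal{X}\times_Z Z'$ over $\mathcal{X}$, hence on its normalization $\mathcal{X}'$, compatibly with $f$ and $\pi'$, so $f$ is Galois and (1)--(3) hold.

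Then I would \emph{spread out the dlt model and read off the dual complexes}, which is where~\cite[Lemma 29]{KX16} and~\cite[Lemma 30]{KX16} do the real work. The dlt model of the generic fibre of $\pi'$ spreads out to a relative birational model $g'\colon(\mathcal{Y}',\Gamma')\to(\mathcal{X}',\mathcal{B}')$ over $Z'$; after running a relative minimal model program and shrinking $Z'$ around $\eta$ to a dense open subset $U$, one arranges that $(\mathcal{Y}',\Gamma')$ is dlt over $U$ and that $g'$ restricts, over every closed point of $U$, to a dlt modification of the corresponding fibre of $\pi'$, and $U$ may be taken so that each of its closed points is general. For such a general $t$, both $(\mathcal{Y}'_\eta,\Gamma'_\eta)\to(\mathcal{X}'_\eta,\mathcal{B}'_\eta)$ and $(\mathcal{Y}'_t,\Gamma'_t)\to(\mathcal{X}'_t,\mathcal{B}'_t)$ are dlt modifications cut out by the single family $(\mathcal{Y}',\Gamma')\to U$; the prime components of $\lfloor\Gamma'\rfloor$ and their intersection strata restrict bijectively and compatibly to those of the two fibres, so the two $\Delta$-complexes literally agree, giving condition (4).

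The step I expect to be the main obstacle is the spreading-out in the previous paragraph: a dlt modification of the generic fibre need not extend to a birational model of $\mathcal{X}'$ whose restriction to nearby closed fibres is still dlt, because log discrepancies are only lower semicontinuous in families and spurious log canonical centres may appear on special fibres. The Galois base change $f_Z$ is precisely what dissolves this obstruction — it is chosen so that the relevant model is defined over $Z'$ rather than over a mere field extension — after which generic flatness together with adjunction along the fibres yields the fibrewise dlt property, exactly as carried out in~\cite[Lemmas 29--30]{KX16}. Everything else reduces to the standard behaviour of log Calabi--Yau pairs under crepant finite base change.
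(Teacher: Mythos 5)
The paper gives no written proof of this lemma; it simply states that it follows from~\cite[Lemma 29 and Lemma 30]{KX16}, which is exactly the route you take. Your proposal is a correct and reasonably detailed unpacking of that same citation --- Galois base change killing the monodromy, normalization of the fibre product with crepant pull-back, and spreading out the dlt model of the generic fibre --- so it matches the paper's intended argument.
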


\subsection{T-singularities and Wahl singularities} 

In this subsection, we collect some basic results about T-singularities. Additionally, we review some panoramic facts about degenerations of rational surfaces. For a more detailed exposition about T-singularities, see, for example, \cite{Urz25}. For previous work on $\mathbb{Q}$-Gorenstein degenerations of rational surfaces with klt singularities, we refer the reader to \cite{HP10},\cite{P15}, and \cite{UZ25}.

\begin{definition}
{\em Let $0<q<m$ be coprime integers. A {\em cyclic quotient singularity} $\frac{1}{m}(1,q)$ is the surface germ at $(0,0)$ of the quotient of $\mathbb{C}^2$ by $(x,y) \mapsto (\zeta x, \zeta^q y)$, where $\zeta$ is a $m$-th primitive root of $1$. 
\label{cqs}
\em} 
\end{definition}

The 2-dimensional klt singularity germs $(X,P)$ admitting a $\mathbb{Q}$-Gorenstein smoothing were classified in \cite[Proposition 3.10]{KSB88}. Singularities attaining this property are referred as T-singularities.

\begin{definition}
{\em 
A {\em T-singularity} is a Du Val singularity or a cyclic quotient singularity $\frac{1}{dn^2}(1,dna-1)$, where $d\geq 1$ and $0<a<n$ are coprime integers. Moreover, we refer to singularities of the form $\frac{1}{n^2}(1,na-1)$ as Wahl singularities.
}
\end{definition}

A normal projective surface $X$ with at most T-singularities does not necessarily admit a $\mathbb{Q}$-Gorenstein smoothing, i.e, a family $\mathcal{X}\to \mathbb{D}$ such that $\mathcal{X}_0=X$ and $\mathcal{X}_t$ is a smooth projective surface for $t\neq 0$. However, if the anticanonical divisor $-K_X$ is big, it follows that $X$ has no local to global obstructions to deform (see for example \cite[Proposition 3.1]{HP10}) and therefore is $\mathbb{Q}$-Gorenstein smoothable. Additionally, for a normal projective surface $X$ containing at most T-singularities the correction terms of the Noether formula are positive integers. Specifically, by \cite[Proposition 3.6]{HP10} we have that 
\begin{align}
K_X^2+\chi_{top}(X)+\sum_{P\in \operatorname{Sing}(X)}\mu_P=12\chi(\mathcal{O}_X).
\label{Noether}
\end{align}
Here each $\mu_P$ is given by $b_2(M_P)$, the second Betti number of the Milnor fiber $M_p$ of the smoothing of $P$. When $P$ has the form $\frac{1}{dn^2}(1,dna-1)$, it follows that $\mu_P=d-1$. Hence, when $P$ is a Wahl singularity, then $\mu_P$ vanishes. These singularities belong to the class of $\mathbb{Q}$HD (rational homology disk) singularities, whose study appeared in \cite{W76} and \cite{W81}.

We now review some topological features about $\mathbb{Q}$-Gorenstein smoothings $\mathcal{X}\to \mathbb{D}$ where the central fiber contains at most Wahl singularities. As in \cite[Section 2.2.2]{H16}, we obtain the following exact sequence in integral homology:
\begin{align}
0 \longrightarrow H_2(\mathcal{X}_t, \zz) \longrightarrow H_2(X, \zz) \longrightarrow \bigoplus_{P_i\in \operatorname{Sing(X)}}H_1(M_{P_i}, \zz) \longrightarrow H_1(\mathcal{X}_t, \zz) \longrightarrow H_1(X, \zz) \longrightarrow 0
\label{longexact}
\end{align}
where the map $H_2(\mathcal{X}_t, \zz) \longrightarrow H_2(X, \zz)$ denotes specialization of 1-cycles. In particular, when $\mathcal{X}_t$ is a rational surface, then $H_1(\mathcal{X}_t, \zz)= H_1(X, \zz)=0$. By \cite[Proposition 4.11]{K05}, it follows that $\Pic(\mathcal{X}_t)=H_2(\mathcal{X}_t,\zz)$ and $\Cl(X)=H_2(X,\zz)$. This leads to the short exact sequence 
\begin{align}
0 \longrightarrow \Pic(\mathcal{X}_t) \longrightarrow \Cl(X) \longrightarrow \bigoplus_{P_i\in \operatorname{Sing(X)}}\Cl(X,P_i)\longrightarrow 0
\label{shortexact}
\end{align}
where $\Cl(P_i\in X)\cong \zz/n_i\zz$ and $n_i$ denotes the Gorenstein index of $P_i$.

\section{Degenerations of cluster type pairs}
\label{sec:degen-cluster-type}

In this section, we prove statements about degenerations of toric pairs as well as cluster type pairs. 

\begin{proof}[Proof of Theorem~\ref{thm:toric-deg}]
Let $f\colon \mathcal{X'}\rightarrow \mathcal{X}$ be the finite cover given by Lemma~\ref{lem:finite-cover}.
By Lemma~\ref{lem:finite-cover}.(2), we have an induced 
finite cover of log Calabi--Yau pairs $(\mathcal{X}',\mathcal{B}')\rightarrow (\mathcal{X},\mathcal{B})$.
Then, up to shrinking $\mathbb{D}$ near $\{0\}$ we have a commutative diagram 
\[
\xymatrix{
(\mathcal{X},\mathcal{B})\ar[d]_-{\pi} & (\mathcal{X}',\mathcal{B}') \ar[l]_-{f}\ar[d]^-{\pi'} \\ 
\mathbb{D} & \mathbb{D}\ar[l]_-{f_\mathbb{D}}
}
\]
where $f_\mathbb{D}$ is simply given by $t\mapsto t^k$ for some suitable positive integer $k$.
The morphism $f'\colon \mathcal{X}'\rightarrow \mathbb{D}$ is a fibration by Lemma~\ref{lem:finite-cover}.(3). 
By assumption $(\mathcal{X},\mathcal{X}_0)$ is plt so by Riemann-Hurwitz we conclude that $(\mathcal{X}',\mathcal{X}_0')$ is plt as well. In particular, the variety $\mathcal{X}_0'$ is irreducible.
By Lemma~\ref{lem:finite-cover}.(4), every component of $\mathcal{B}'_t$ 
is the restriction to $\mathcal{X}'_t$ of a component of $\mathcal{B}'$. 
Note that $\pi$ and $\pi'$ have the same general log fibers; indeed $f$ is induced by a finite cover of $\mathbb{D}$ ramified over $\{0\}$. 
Therefore, the general fiber $(\mathcal{X'},\mathcal{B'})$ is a projective toric variety of dimension $n$ 
and Picard rank $\rho$. 
Shrinking $\mathbb{D}$ around $\{0\}$, we may assume that all the fibers over $\pi'$ are irreducible.
Therefore, we conclude that $\rho(\mathcal{X}'/\mathbb{D})\leq \rho$.
On the other hand, as every component of $\mathcal{B}'_t$ is the restriction to $\mathcal{X}'_t$ of a component of $\mathcal{B}$, 
we conclude that $\mathcal{B}$ has at least $n+\rho$ components. 
Thus, we can compute the relative complexity of
the log Calabi--Yau pair $(\mathcal{X}',\mathcal{B}'+\mathcal{X}_0')$ over $\{0\} \in \mathbb{D}$. 
We obtain
\[
c_{\{0\}}(\mathcal{X}'/\mathbb{D},\mathcal{B}'+\mathcal{X}_0') =
\dim \mathcal{X}' + \rho(\mathcal{X}'/\mathbb{D}) - |\mathcal{B}'+\mathcal{X}_0'| \leq 
n+1+\rho - (n+\rho+1).
\]
Therefore, by~\cite[Theorem 1]{MS21}, we conclude that 
$(\mathcal{X}',\mathcal{B}'+\mathcal{X}'_0)$ is a formally toric morphism near $\{0\}$. 
Thus, the pair $(\mathcal{X}'_0,\mathcal{B}'_0)$ obtained from adjunction
of $(\mathcal{X}',\mathcal{B}'+\mathcal{X}'_0)$ must be a projective
toric log Calabi--Yau pair. 
Henceforth, we have a finite crepant morpshim 
of log Calabi--Yau pairs 
\[
f_0\colon (\mathcal{X}'_0,\mathcal{B}'_0) 
\rightarrow (\mathcal{X}_0,\mathcal{B}). 
\]
We conclude that the pair $(\mathcal{X}_0,\mathcal{B}_0)$
is a finite quotient of a toric pair. 
\end{proof} 

\begin{proof}[Proof of Theorem~\ref{thm:ct-deg}]
Let $f\colon \mathcal{X}'\rightarrow \mathcal{X}$ be the finite cover given by Lemma~\ref{lem:finite-cover}. 
By Lemma~\ref{lem:finite-cover}.(2), we have an induced finite cover of log Calabi--Yau pairs 
$(\mathcal{X}',\mathcal{B}'+\mathcal{X}'_0)\rightarrow 
(\mathcal{X},\mathcal{B}+\mathcal{X}_0)$. 
Up to shrinking $\mathbb{D}$ near $\{0\}$, we have a commutative diagram 
\[
\xymatrix{
(\mathcal{X},\mathcal{B}+\mathcal{X}_0) \ar[d]_-{\pi} & (\mathcal{X}',\mathcal{B}'+\mathcal{X}'_0) \ar[d]^-{\pi'}\ar[l]_-{f}  \\ 
\mathbb{D} & \mathbb{D}\ar[l]_-{f_\mathbb{D}}
}
\]
satisfying the following conditions:
\begin{enumerate}
\item[(i)]  $\pi'\colon \mathcal{X}' \rightarrow \mathbb{D}$ is a Fano type morphism; 
\item[(ii)] $(\mathcal{X}',\mathcal{B}'+\mathcal{X}'_0)$ is log Calabi--Yau of index one over $\mathbb{D}$; 
\item[(iii)] $(\mathcal{X}',\mathcal{X}_0')$ is plt and purely terminal outside $\mathbb{B}'$;
\item[(iv)] $(\mathcal{X}'_t,\mathcal{B}'_t)$ is of cluster type for $t$ general in $\mathbb{D}$; and
\item[(v)] for $t \in \mathbb{D}$ general, every log canonical center of $(\mathcal{X}'_t,\mathcal{B}'_t)$ is the restriction to 
$\mathcal{X}'_t$ of a log canonical center of $(\mathcal{X'},\mathcal{B}')$.
\end{enumerate}
We may further assume that the following condition is satisfied: 
\begin{enumerate}
\item[(vi)] we have an isomorphism 
\[
{\rm Cl}(\mathcal{X}'/\mathbb{D}) \simeq {\rm Cl}(\mathcal{X}_t) 
\]
induced by restriction
for $t\in \mathbb{D}$ general.
\end{enumerate} 
Note that every log canonical center of $(\mathcal{X},\mathcal{B})$ is horizontal over $\mathbb{D}$.
As $-(K_{\mathcal{X}}+\mathcal{X}_0)$ is ample over $\mathbb{D}$ we may find a boundary $\Delta$ which gives a klt Calabi--Yau pair $(\mathcal{X},\mathcal{X}_0+\Delta)$ over
$\mathbb{D}$. If $\Delta$ is chosen to be general in $|-mK_{\mathcal{X}}|$, then we can assume that the klt Calabi--Yau pair
$(\mathcal{X},\mathcal{X}_0+\Delta)$ is purely terminal on the complement of $\mathcal{B}$. 
We let $(\mathcal{X}',\mathcal{X}_0'+\Delta')$ be the log pull-back of 
$(\mathcal{X},\mathcal{X}_0+\Delta)$ to $\mathcal{X}'$.
Then, the pair $(\mathcal{X}',\mathcal{X}_0'+\Delta')$
is purely terminal on the complement of $\mathcal{B}'$.
In particular, every non-terminal place of 
$(\mathcal{X}',\mathcal{X}_0'+\Delta')$
is horizontal over $\mathbb{D}$
and is a log canonical place of
$(\mathcal{X}',\mathcal{B}'+\mathcal{X}'_0)$. 
Let $\phi\colon \mathcal{Y}\rightarrow \mathcal{X}$ be a $\qq$-factorial 
dlt modification of $(\mathcal{X},\mathcal{B})$
which extracts all non-terminal places of 
$(\mathcal{X}',\Delta'+\mathcal{X}_0')$. 
We write 
\[
\phi^*(K_{\mathcal{X}'}+\mathcal{B}'+\mathcal{X}_0')= 
K_{\mathcal{Y}}+\mathcal{B}_{\mathcal{Y}}+\mathcal{Y}_0
\]
Note that the induced crepant projective birational morphism 
\[
\phi_0 \colon (\mathcal{Y}_0,\mathcal{B}_{\mathcal{Y}_0}) 
\rightarrow (\mathcal{X}_0',\mathcal{B}_0')
\]
only extracts log canonical places of the pair
$(\mathcal{X}_0',\mathcal{B}_0')$.
Analogously, for $t\in \mathbb{D}$ general, the 
crepant projective birational morphism 
\[
\phi_t \colon (\mathcal{Y}_t,\mathcal{B}_{\mathcal{Y}_t}) 
\rightarrow (\mathcal{X}_t',\mathcal{B}_t')
\]
only extracts log canonical places of $(\mathcal{X}_t',\mathcal{B}_t')$.
Therefore, the pair $(\mathcal{Y}_t,\mathcal{B}_{\mathcal{Y}_t})$ is of cluster type by Lemma~\ref{lem:cluster-type-ascends}.
By Lemma~\ref{lem:cluster-type-via-dlt}, we know that there is a dlt modification 
\[
\psi_t\colon (\mathcal{Z}_t,\mathcal{B}_{\mathcal{Z}_t})
\rightarrow 
(\mathcal{Y}_t,\mathcal{B}_{\mathcal{Y}_t})
\]
and a birational contraction 
\[
\chi_t\colon (\mathcal{Z}_t,\mathcal{B}_{\mathcal{Z}_t})
\dashrightarrow 
(T_t,B_{T_t}) 
\]
to a toric log Calabi--Yau pair. 
By condition (v) above, the dlt modification 
$\psi_t$ is indeed the restriction to $\mathcal{Y}_t$
of a dlt modification 
$\psi\colon (\mathcal{Z},\mathcal{B}_{\mathcal{Z}})\rightarrow 
(\mathcal{Y},\mathcal{B}_{\mathcal{Y}})$. 
Let $E_t$ be the exceptional locus of $\chi_t$.
Note that we still have an isomorphism ${\rm Cl}(\mathcal{Z}/\mathbb{D})\simeq {\rm Cl}(\mathcal{Z}_t)$. 
Thus, by~\cite[Lemma 4.2]{JM24}, there exists an effective divisor $E$
on $\mathcal{Z}$ that restricts to $E_t$. 
Further, we may assume that $E$ has as many irreducible components as $E_t$. 
Since $E_t$ has Kodaira dimension zero, we conclude that $E$ has Kodaira dimension zero as well.
On the other hand, we know that 
\[
c(\mathcal{Z}_t,\mathcal{B}_{\mathcal{Z}_t})=|E_t|.
\]
Thus, we conclude that 
\begin{equation}\label{ineq:bound} 
|\mathcal{B}_{\mathcal{Z}}+\mathcal{Z}_0| \geq \dim \mathcal{Z} + \rho(\mathcal{Z}) - |E|.
\end{equation} 
For $\epsilon>0$ small enough, we can define:
\[
\psi^*\phi^*(K_{\mathcal{X}'}+(1-\epsilon)\mathcal{B}'+\epsilon \Delta'+\mathcal{X}'_0) =
K_{\mathcal{Z}}+\Delta_{\mathcal{Z}}+\mathcal{Z}_0, 
\]
so the pair $(\mathcal{Z},\Delta_{\mathcal{Z}}+\mathcal{Z}_0)$ is purely terminal and log Calabi--Yau over $\mathbb{D}$. 
We pick $\delta>0$ small enough and run a 
$(K_{\mathcal{Z}}+\Delta_{\mathcal{Z}}+\mathcal{Z}_0+\delta E)$-MMP over $\mathbb{D}$ which terminates on a model $T/\mathbb{D}$ after contracting all the components of $E$. 
Let $\chi \colon \mathcal{Z}\rightarrow T$ be a birational contraction induced by this MMP. 
Let $(T,B_T+T_0)$ be the pair obtained from pushing forward  $(\mathcal{Z},\mathcal{B}_{\mathcal{Z}}+\mathcal{Z}_0)$ to $T$. 
From inequality~\eqref{ineq:bound}, we conclude that 
\[
|B_T+T_0| \geq \dim T + \rho(T). 
\]
Thus, we have that the pair $(T,B_T+T_0)$ is toric over $\mathbb{D}$.
In particular, the pair $(T_0,B_{T_0})$ obtained from adjunction
to the central fiber is toric. 
We argue that $(\mathcal{Z}_0,\mathcal{B}_{\mathcal{Z}_0})\dashrightarrow 
(T_0,B_{T_0})$ is a birational contraction. 
Indeed, for $\delta>0$ small enough the pair 
$(\mathcal{Z},\Delta_{\mathcal{Z}}+\mathcal{Z}_0+\delta E)$ is purely terminal, so the pair $(\mathcal{Z}_0,\Delta_{\mathcal{Z}_0}+\delta E_0)$ obtained by adjunction to $\mathcal{Z}_0$ is terminal. 
Therefore, the birational map $\chi_0$ cannot extract divisors.
Thus, we conclude that $(\mathcal{X}'_0,\mathcal{B}'_0)$ 
admits a dlt moldification which has a crepant birational contraction to a toric log Calabi--Yau pair $(T_0,B_{T_0})$. 
By Lemma~\ref{lem:cluster-type-via-dlt}, we conclude that 
$(\mathcal{X}_0',\mathcal{B}'_0)$ is a cluster type pair.
Note that we have a crepant finite morphism 
\[
f_0\colon (\mathcal{X}'_0,\mathcal{B}'_0)\rightarrow 
(\mathcal{X}_0,\mathcal{B}_0).
\]
Thus, we conclude that $(\mathcal{X}_0,\mathcal{B}_0)$
is a finite quotient of a cluster type pair.
\end{proof} 

\section{Complements on degenerations of klt surfaces}
\label{sec:complements-degen-klt-surfaces}

In this section, we study degenerations of klt surfaces.
We focus on degenerations of weighted projective planes.
We prove a general statement, Theorem~\ref{thm:1-comp},
that allows us to show that a degeneration of klt surfaces
$\mathcal{X}\rightarrow \mathbb{D}$ admits a $1$-complement.
In order to do so, we need to avoid some special singularities
on the general fiber $\mathcal{X}_t$. 
Our proof for the existence of a $1$-complement 
does not apply in these cases for technical reasons. 
These singularities are introduced in the next definition.

\begin{definition}
\normalfont
We  define the following basket of singularities (up to permutation)\footnote{The notation $[2^k]$ denotes the chain $[2,\cdots,2]$ of $k$ consecutive 2's.}:
\[
\mathcal{F}_1:= \{ [3,2^k] \mid k\in \zz_{\geq 0}\}, 
\]
\[
\mathcal{F}_2:=\{ [4,2^k], [2,3,2^k] \mid k\in \zz_{\geq 0}\}, 
\]
\[ 
\mathcal{F}_3:=\{ [5,2^k], [2,2,3,2^k] \mid k\in \zz_{\geq 0}\}, 
\text{ and }
\]
\[
\mathcal{F}_4:= \{ [6,2^k],[2,2,2,3,2^k],[2,4,2^k],[3,3,2^k] \mid 
k\in \zz_{\geq 0}\}.
\]
The basket of singularities $\mathcal{D}$ 
will denote {\em $D$-type cyclic} singularities, i.e., cyclic quotient singularities with minimal resolution $[2,n,2]$ with $n\geq 2$. These singularities are cyclic quotients; however, they also admit reduced 2-complements as in the case of $D$ singularities.
\label{basket}
\end{definition}

\begin{remark}\label{Wahldeg}
{\em 
The basket of singularities $\mathcal{F}_i$ with $i\in \{1,\dots,4\}$ corresponds precisely to such singularities  that contract torically to a point of orbifold index $i+1$ in such a way
that the contracted curve passes through a unique singular point.
Note that the Wahl singularities appearing in the baskets
$\mathcal{F}_1,\dots,\mathcal{F}_4,\mathcal{D}$ are only finitely many singularities, namely; $[4]$, $[5,2]$, and $[6,2,2]$.
}
\end{remark}

Before establishing the results on the existence of 1-complements on degenerations of surfaces with cyclic quotient singularities, we first state the following preliminary lemmas.

\begin{lemma}
Let $\mathcal{X}$ be $\qq$-Gorenstein variety with at most log canonical singularities and $\mathcal{X}\to\mathbb{D}$ be a proper morphism. Let $0<\varepsilon<1$. If for each $t\neq 0$ we have $mld(\mathcal{X}_t)<\varepsilon$, there exists a divisor $E$ over $\mathcal{X}$ dominating $\mathbb{D}$ for which $a_E(\mathcal{X})< \varepsilon$. 
Furthermore, there exists a projective birational morphism $\mathcal{Y}\rightarrow \mathcal{X}$
from a $\qq$-Gorenstein variey $\mathcal{Y}$ that only extracts $E$.
\label{extraction}
\end{lemma}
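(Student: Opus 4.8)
The plan is to separate the statement into two parts: first produce a divisor $E$ over $\mathcal{X}$, dominating $\mathbb{D}$, with $a_E(\mathcal{X})<\varepsilon$; then show that such an $E$ can be extracted by a single projective birational morphism from a $\qq$-Gorenstein variety. For the first part, I would argue by contradiction: suppose every divisor $E$ over $\mathcal{X}$ dominating $\mathbb{D}$ has $a_E(\mathcal{X})\geq \varepsilon$. Since $\mathcal{X}$ is log canonical and $\mathcal{X}\to\mathbb{D}$ is proper, the generic fiber $\mathcal{X}_\eta$ over the generic point $\eta$ of $\mathbb{D}$ is log canonical, and log discrepancies of divisors over $\mathcal{X}$ that dominate $\mathbb{D}$ coincide with log discrepancies of the corresponding divisors over $\mathcal{X}_\eta$ (a divisor over $\mathcal{X}_\eta$ spreads out to a divisor over $\mathcal{X}$ dominating $\mathbb{D}$, and the relative canonical discrepancies are unchanged because $\mathcal{X}\to\mathbb{D}$ is smooth at the generic point of $\mathbb{D}$). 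Hence the assumption gives $\mld(\mathcal{X}_\eta)\geq \varepsilon$. Now I would invoke lower-semicontinuity of the minimal log discrepancy in families — more precisely, the fact that for a $\qq$-Gorenstein family over a curve, the function $t\mapsto \mld(\mathcal{X}_t)$ is lower semicontinuous (see, e.g., the usual inversion-of-adjunction/semicontinuity statements, as in the references on mld's) — which forces $\mld(\mathcal{X}_t)\geq \mld(\mathcal{X}_\eta)\geq \varepsilon$ for very general $t$, contradicting the hypothesis $\mld(\mathcal{X}_t)<\varepsilon$ for all $t\neq 0$.

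For the second part, once $E$ is fixed with $a_E(\mathcal{X})<\varepsilon<1$, the divisor $E$ is in particular a \emph{non-canonical} (indeed, strictly log canonical is excluded since $a_E<1<\dim+\ldots$ is irrelevant here; the point is $a_E(\mathcal{X})<1$) place of $\mathcal{X}$, so it computes a positive coefficient in the discrepancy. I would then apply the standard existence theorem for extractions of a single divisor: by \cite[Corollary 1.4.3]{BCHM} (or the usual consequence of the MMP with scaling), for a $\qq$-Gorenstein klt (here log canonical, but we may perturb: choose a general ample $\qq$-divisor $A$ and a small $\delta>0$ so that $(\mathcal{X},\delta A)$ is klt and $a_E(\mathcal{X},\delta A)<1$ still) pair and the single divisorial valuation $E$ with log discrepancy less than $1$, there is a projective birational morphism $\mathcal{Y}\to\mathcal{X}$ with $\mathcal{Y}$ $\qq$-factorial and $\Exc$ equal to the strict transform of $E$ alone. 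Running a suitable MMP to contract all other extracted divisors yields the morphism that only extracts $E$, and $\mathcal{Y}$ is $\qq$-Gorenstein since it is $\qq$-factorial.

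The main obstacle I anticipate is making the semicontinuity step fully rigorous in the degenerate (non-smooth) setting: the family $\mathcal{X}\to\mathbb{D}$ has $\qq$-Gorenstein but possibly quite singular total space, and the reduced special fiber $\mathcal{X}_0$ may fail to be $\qq$-Gorenstein a priori, so one must be careful that "$\mld$ of fibers" is even well-defined and that the comparison with $a_E(\mathcal{X})$ over the total space is the correct one. I would handle this by working with the relative log discrepancy $a_E(\mathcal{X}/\mathbb{D})$ for $E$ horizontal over $\mathbb{D}$, noting that for such $E$ the total-space discrepancy and the relative one agree (again because $\mathbb{D}$ is smooth and $\mathcal{X}\to\mathbb{D}$ is a fibration, so $K_{\mathcal{X}}=K_{\mathcal{X}/\mathbb{D}}+\pi^*K_{\mathbb{D}}$ and the pullback of $K_{\mathbb{D}}$ contributes nothing to a horizontal divisor's discrepancy), and then restricting to very general $t$ where inversion of adjunction / generic-fiber comparison holds. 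The rest is a routine application of \cite{BCHM}.
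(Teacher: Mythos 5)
Your argument for the existence of $E$ is, at bottom, the same as the paper's: both reduce to the fact that a log resolution of the total space (equivalently, of the generic fiber, spread out over the base) restricts to a log resolution of a general fiber, and that the log discrepancies of the horizontal exceptional divisors are unchanged under this restriction, so that a divisor computing $\mld(\mathcal{X}_t)<\varepsilon$ for general $t$ is the restriction of a fixed horizontal divisor $E$ with $a_E(\mathcal{X})<\varepsilon$. The paper runs this directly and constructively; you run it by contradiction through the generic fiber. One caution: you phrase the key step as ``lower semicontinuity of $t\mapsto\mld(\mathcal{X}_t)$,'' but full lower semicontinuity in families (in particular the comparison with the special fiber $t=0$) is a hard open problem in general; what you actually use is only the comparison between the generic fiber and very general closed fibers, which is elementary spreading-out and is exactly the computation the paper writes down. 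I would state it that way rather than appeal to a semicontinuity theorem.

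For the extraction step there is a genuine (if minor) gap in your write-up: the lemma only assumes $\mathcal{X}$ is log canonical, and your perturbation ``choose a general ample $A$ and small $\delta$ so that $(\mathcal{X},\delta A)$ is klt'' cannot work when $\mathcal{X}$ is strictly lc, since adding an effective divisor never improves singularities. So BCHM's Corollary~1.4.3 does not apply verbatim; one needs the lc version of the extraction theorem (this is what the paper's citation of \cite{Mor19} supplies), or one must restrict to the case where $\mathcal{X}$ is klt --- which does happen to hold in every application in the paper, since $(\mathcal{X},\mathcal{X}_0)$ plt forces $\mathcal{X}$ klt. With that reference corrected, your proof is complete.
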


\begin{proof}
Let $p\colon \mathcal{Y}\rightarrow \mathcal{X}$ be a log resolution.
For $t$ general, the induced morphism $p_t\colon \mathcal{Y}_t\rightarrow \mathcal{X}_t$ is a log resolution. 
Let $E_1,\dots,E_r$ be the exceptional divisors of $p$ which are horizontal over $\mathbb{D}$
and let $F_1,\dots,F_s$ be the exceptional divisors of $p$ which are vertical over $\mathbb{D}$. 
Write 
\begin{equation}\label{eq:log-discrep}
p^*(K_{\mathcal{X}})=K_{\mathcal{Y}} +\sum_{i=1}^r (1-\alpha_i)E_i + 
\sum_{i=1}^s (1-\beta_i)F_i, 
\end{equation} 
where the $\alpha_i$'s and the $\beta_i$'s are the corresponding log discrepancies.
If we restrict the equality~\eqref{eq:log-discrep} to $\mathcal{X}_t$ general, we obtain 
\[
p_t^*(K_{\mathcal{X}_t})=K_{\mathcal{Y}_t} + \sum_{i=1}^r (1-\alpha_i)E_{i,t}. 
\]
By assumption, we conclude that for each $t$ general, there exists a divisor 
$E_{i,t}$ with $\alpha_i <\epsilon$. 
Thus, we may assume that $\alpha_1<\epsilon$. 
By~\eqref{eq:log-discrep}, we conclude that $a_{\mathcal{X}}(E_1)<\epsilon$ so
such divisor $E$ over $\mathcal{X}$ dominating $\mathbb{D}$ exists. 
The last statement follows from~\cite[Theorem 1]{Mor19}.
\end{proof}

\begin{lemma}
Let $X$ be a normal projective surface with log terminal singularities and $C$ an irreducible rational curve in $X$. If $(X,lC)$ is a klt pair for $l>\frac{5}{6}$, then $(X,C)$ is a log canonical pair. 
\label{lcpairlemma}
\end{lemma}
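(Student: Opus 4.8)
The plan is to argue by contradiction and then do a curve-geometry computation on the minimal resolution, exploiting that $C$ is rational. Suppose $(X,C)$ is not log canonical. Since $(X,lC)$ is klt for $l>5/6$, the pair $(X,C)$ is log canonical \emph{in codimension one} away from at most finitely many points; more precisely, the non-lc locus of $(X,C)$ is a finite set of points $P_1,\dots,P_k\in C$, each of them a log terminal singularity of $X$ (because $X$ itself is log terminal). Near such a point the failure of log canonicity is governed by the local intersection of $C$ with the exceptional divisors of the minimal resolution of $(X,P_i)$. So the first step is to reduce the whole statement to a purely local statement at a cyclic quotient singularity $\frac{1}{m}(1,q)$ through which $C$ passes.

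Next I would pass to the minimal resolution $\mu\colon Y\to X$ and write $\mu^*(K_X+C)=K_Y+\widetilde C+\sum_i a_i E_i$, where $\widetilde C$ is the strict transform of $C$ and the $E_i$ are the $\mu$-exceptional curves; the $a_i$ are determined by the linear system $(K_Y+\widetilde C+\sum a_iE_i)\cdot E_j=0$ for all $j$, together with $K_Y\cdot E_j + E_j^2=-2$ (all $E_j$ are smooth rational). The hypothesis translates to: some $a_i>1$. The klt hypothesis for $l>5/6$ forces, after the analogous computation for $(X,lC)$, that all the corresponding discrepancy coefficients are $<1$, which pins the $a_i$ into the half-open interval controlled by $5/6$; concretely one gets $a_i<\tfrac{6}{5}$ for every $i$, or more usefully a bound that, combined with $a_i>1$, is sharp. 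The strategy is to feed these linear constraints into the adjunction/arithmetic-genus estimate for the rational curve $C$: compute $K_X\cdot C$ and $C^2$ via pullback, use $p_a(C)=0$, i.e. $(K_X+C)\cdot C = -2 + (\text{number of local analytic branches/singular contributions of }C)$, and derive a numerical contradiction with the existence of a coefficient exceeding $1$ while all others stay below $6/5$.

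The main obstacle, and the step that needs the most care, is the \emph{global} bookkeeping: a single rational curve can pass through several singular points of $X$ and can itself be singular, so one must simultaneously control the contributions of all the $P_i$ to the adjunction formula on $C$ and rule out cancellations. The cleanest route is probably to localize: show that at a \emph{single} bad point the discrepancy $a_i>1$ already forces the local intersection multiplicity of $\widetilde C$ with the exceptional chain to be so small that, when summed over all points, $(K_X+C)\cdot C < -2$, contradicting $p_a(C)\ge 0$ — here the constant $5/6$ is exactly what makes the inequality close up. I would also want to invoke Lemma~\ref{extraction} or a standard extraction result only if I need the offending divisor realized by an actual birational morphism, but for this statement the discrepancy computation on the minimal resolution should suffice. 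The threshold $\tfrac56$ strongly suggests the extremal case is the $D$-type or $A_n$ chain $[2,\dots,2]$ with $C$ meeting an end component, where the discrepancies interpolate between $0$ and $1$ linearly in $1/(n+1)$; checking that this extremal configuration is consistent with $l=5/6$ but fails for any $l>5/6$ is the heart of the argument.
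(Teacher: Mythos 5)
There is a genuine gap, and the strategy is pointed in the wrong direction. The lemma is purely local: the paper's proof fixes a point $x\in C$, passes to the local quasi-\'etale (index-one) cover $Y\to X$ of the quotient singularity --- which is crepant, so log canonicity and the klt condition for $(X,tC)$ and $(Y,tC_Y)$ coincide --- and then invokes the classification of log canonical thresholds of reduced plane curve germs: a germ of multiplicity $\geq 3$ has $\lct\leq 2/3$, and an $A_n$ germ has $\lct=\tfrac12+\tfrac1{n+1}$, so the only reduced germs with $\lct>\tfrac56$ are smooth points and nodes, both of which have $\lct=1$. That classification is the entire content of the lemma, and your proposal never identifies or proves it; you hope instead that a global numerical contradiction will ``close up,'' but the extremal configuration (a cusp of $\widetilde C$ at a \emph{smooth} point of the minimal resolution, with $\lct$ exactly $\tfrac56$) is a local analytic phenomenon that global intersection numbers cannot see.

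Two concrete failures. First, your adjunction bookkeeping rests on ``$p_a(C)=0$,'' which is false for singular rational curves: a cuspidal plane cubic is irreducible and rational with $p_a(C)=1$, and it is exactly the boundary case of the lemma ($\lct=5/6$, so the hypothesis fails for it, but not by any global inequality --- $(K_{\pp^2}+C)\cdot C=0$ and nothing contradicts $p_a\geq 0$). Rationality of $C$ plays no role in the statement or in any correct proof. Second, the bound $a_i<\tfrac65$ you extract only constrains the exceptional divisors of the minimal resolution of $X$; log canonicity of $(X,C)$ is \emph{not} detected on that model. One must further blow up the singular points of $\widetilde C$ and its tangencies with the exceptional chain, and it is precisely those later divisors that acquire coefficient $>1$ when $C$ has a cusp or worse. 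So the linear constraints you propose do not control the divisors that actually witness the failure of log canonicity. To repair the argument you would have to localize at each point, pass to the smooth cover (or equivalently compute on a genuine log resolution of the pair), and carry out the germ-by-germ threshold computation --- at which point you have reproduced the paper's proof and the global rationality argument is superfluous.
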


\begin{proof}
Assume that $(X,lC)$ is klt near $x$ a closed point. 
We argue that $(X,C)$ is lc near $x$. 
Let $t\in [0,1]$ be a parameter.
Let $\phi\colon Y\rightarrow X$ be a finite quasi-\'etale cover near $x$ 
such that $Y$ is smooth over the  $y$ preimage of $x$. 
Let $\phi^*(K_X+tC)=K_Y+tC_Y$. 
The log canonical condition for pairs behaves well under quasi-\'etale finite covers
so $(X,tC)$ is lc if and only if $(Y,tC_Y)$ is lc.
Thus, it suffices to show the statement for smooth germs.  
In this case, it is known that the reduced curves with the
largest log canonical thresholds are smooth and nodal curves, with threshold one,
and the cuspidal singular curve with threshold $\frac{5}{6}$.
Therefore, if $(X,lC)$ is lc for $l>5/6$, then 
$(Y,lC_Y)$ is lc for $l>5/6$ which implies that $C_Y$ is either smooth or nodal at $y$.
Hence, $(Y,C_Y)$ and $(X,C)$ are both lc.
\end{proof}

\begin{lemma}\label{lem:min-resol-horizontal}
Let $\mathcal{X}\rightarrow \mathbb{D}$ be a $\qq$-Gorenstein klt degeneration of a rational normal projective surface, with $\mathcal{X}_0$ reduced and $\rho(\mathcal{X}_t)=\rho(\mathcal{X}_0)$.
Let $\pi\colon \mathcal{Y}\rightarrow \mathcal{X}$ be a projective birational morphism such that all whose exceptional divisors $E$ are $\qq$-Cartier, horizontal over $\mathcal{X}$ and satisfy $a_E(\mathcal{X})\leq 1$. 
Assume that $\mathcal{Y}_t$ is smooth for $t\neq 0$.
Then, for each exceptional prime divisor $E$ of $\pi$, we have that $E_0:=E\cap \mathcal{Y}_0$ is an irreducible curve. In addition, if $E_t^2<-1$ for $t\neq 0$, then $E_0$ contains most at most two Wahl singular points $P_0$ and $P_1$, and the pair $(\mathcal{Y}_0,E_0)$ is plt near $P_i$ $(i=0,1)$. 
\end{lemma}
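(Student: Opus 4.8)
The plan is to realize $\pi$ itself as a $\qq$-Gorenstein smoothing, extract rigidity from a Noether-formula count constrained by the Picard-rank hypothesis, and then run adjunction along $E_0$. First I would note that $\mathcal{Y}\to\mathbb{D}$ is a $\qq$-Gorenstein klt degeneration: since $K_{\mathcal{Y}}=\pi^*K_{\mathcal{X}}+\sum_E(a_E(\mathcal{X})-1)E$ with each $E$ $\qq$-Cartier, $K_{\mathcal{Y}}$ is $\qq$-Cartier; and as each $E$ is horizontal over $\mathbb{D}$ we have $\mult_E\pi^*\mathcal{X}_0=0$, so $\mathcal{Y}_0:=\pi^*\mathcal{X}_0$ is reduced and $\pi^*(K_{\mathcal{X}}+\mathcal{X}_0)=K_{\mathcal{Y}}+\mathcal{Y}_0+\sum_E(1-a_E(\mathcal{X}))E$ with every $1-a_E(\mathcal{X})\in[0,1)$; decreasing this boundary shows $(\mathcal{Y},\mathcal{Y}_0)$ is plt. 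Hence $\mathcal{Y}_0$ is normal and $K_{\mathcal{Y}}|_{\mathcal{Y}_0}\sim_{\qq}K_{\mathcal{Y}_0}$; since $\mathcal{Y}_t$ is smooth for $t\neq0$, by \cite[Proposition~3.10]{KSB88} the surface $\mathcal{Y}_0$ has only T-singularities. Invariance of $h^i(\mathcal{O})$ in flat families with Du Bois central fibre gives $\chi_{top}(S)=2+\rho(S)$ for $S\in\{\mathcal{X}_0,\mathcal{X}_t,\mathcal{Y}_0,\mathcal{Y}_t\}$.

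Now I would run the Noether count. Applying \eqref{Noether} to $\mathcal{Y}_t$ and $\mathcal{Y}_0$ and using constancy of $K^2$ and $\chi(\mathcal{O})$ in the $\qq$-Gorenstein family, $\rho(\mathcal{Y}_t)-\rho(\mathcal{Y}_0)=\sum_{P\in\operatorname{Sing}(\mathcal{Y}_0)}\mu_P\ge0$. On the other hand, $\mathcal{Y}_t\to\mathcal{X}_t$ and $\mathcal{Y}_0\to\mathcal{X}_0$ are birational morphisms of $\qq$-factorial klt surfaces, so $\rho(\mathcal{Y}_s)-\rho(\mathcal{X}_s)$ equals the number of exceptional prime divisors; feeding in $\rho(\mathcal{X}_t)=\rho(\mathcal{X}_0)$ turns the inequality into the statement that on the central fibre no exceptional divisor of $\pi$ breaks into more components than on $\mathcal{Y}_t$. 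Since $E$ dominates $\mathbb{D}$, its Stein factorisation over $\mathbb{D}$ is totally ramified over $0$, so $E_0:=E|_{\mathcal{Y}_0}$ is connected; and after the standard base change totally ramified over $0$ (which changes neither $\mathcal{Y}_0$, $\mathcal{X}_0$, nor the assertion) we may assume $\pi(E)$ is a section of $\mathcal{X}\to\mathbb{D}$, so $E_t:=E|_{\mathcal{Y}_t}$ is irreducible. Together these force $\sum_P\mu_P=0$ — so every singularity of $\mathcal{Y}_0$ is a Wahl singularity — and $E_0$ is irreducible; being a flat limit of $E_t\cong\pp^1$ with $\chi(\mathcal{O})$ constant, $E_0\cong\pp^1$.

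For the case $E_t^2<-1$: conservation of intersection numbers on $\mathcal{Y}$ (the fibres are linearly equivalent and $E,K_{\mathcal{Y}}$ are $\qq$-Cartier) gives $E_0^2=E_t^2$ and $K_{\mathcal{Y}_0}\cdot E_0=K_{\mathcal{Y}_t}\cdot E_t=-E_t^2-2$, whence adjunction for $(\mathcal{Y}_0,E_0)$ along $E_0\cong\pp^1$ forces $\deg\operatorname{Diff}_{E_0}(0)=K_{\mathcal{Y}_0}\cdot E_0+E_0^2+2$ to be a very small non-negative number (vanishing when $E_0$ is smooth). Passing to the minimal resolution $g\colon\widetilde{\mathcal{Y}}_0\to\mathcal{Y}_0$ and writing $g^*(K_{\mathcal{Y}_0}+E_0)=K_{\widetilde{\mathcal{Y}}_0}+\widetilde{E}_0+\sum_i c_iF_i$, each singular point of $\mathcal{Y}_0$ on $E_0$ — necessarily Wahl by the previous step — would make $\operatorname{Diff}_{E_0}(0)$ pick up a coefficient bounded below by a definite positive constant coming from a Wahl chain met transversally at an end, so the bound on $\deg\operatorname{Diff}_{E_0}(0)$ caps the number of such points at two; the constraint $E_t^2\le-2$, together with the fact that the $F_i$ form Wahl chains, then forces $\widetilde{E}_0$ to meet each of them transversally in one point of an end curve, so all $c_i<1$ and $(\mathcal{Y}_0,E_0)$ is plt near each. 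I expect the main obstacle to be this last local analysis — excluding interior intersections of $\widetilde{E}_0$ with a Wahl chain, tangencies with the exceptional locus, and configurations with three or more chains attached to $\widetilde{E}_0$ — which is exactly where the smoothness of $\mathcal{Y}_t$ and the structure of $\qq$-Gorenstein smoothings of Wahl singularities (cf.\ \cite{HP10}, \cite{H16}) come in; a secondary point is the base change used above to make $E_t$ irreducible in the Noether count.
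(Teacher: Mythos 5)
Your first half --- realizing $\mathcal{Y}\to\mathbb{D}$ as a $\qq$-Gorenstein smoothing whose central fibre has only T-singularities, then playing the Noether formula \eqref{Noether} against the count of exceptional prime divisors to force $\sum_P\mu_P=0$ (so all singularities are Wahl) and the irreducibility of each $E_0$ --- is correct and is essentially the paper's argument, which packages the same Picard-rank constraint as linear independence of the components of a hypothetical decomposition $E_0=D_1+D_2$ inside $\Pic(\mathcal{Y}_0)\otimes_{\zz}\qq$.

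The second half has a genuine error. The restriction of the prime divisor $E$ to the central fibre is $E|_{\mathcal{Y}_0}=kE_0$ with $k\geq 1$ \emph{not} necessarily equal to $1$: the paper is explicit about this multiplicity, and it must be large enough to make $kE_0$ Cartier at the Wahl points of $\mathcal{Y}_0$ lying on $E_0$ (compare Lemma~\ref{lem:sing-central-fiber}, where these multiplicities are computed to equal $n$). Consequently the conserved quantities are $k^2E_0^2=E_t^2$ and $k\,(K_{\mathcal{Y}_0}\cdot E_0)=K_{\mathcal{Y}_t}\cdot E_t$, not the equalities $E_0^2=E_t^2$ and $K_{\mathcal{Y}_0}\cdot E_0=K_{\mathcal{Y}_t}\cdot E_t$ that you assert. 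Your conclusion $\deg\operatorname{Diff}_{E_0}(0)=K_{\mathcal{Y}_0}\cdot E_0+E_0^2+2=0$ is therefore false; worse, if it were true it would force $E_0$ to avoid every singular point of $\mathcal{Y}_0$ (a coordinate axis through a Wahl point $\tfrac{1}{n^2}(1,na-1)$ already contributes $1-\tfrac{1}{n^2}$ to the different), contradicting the very conclusion being proved and the later applications of the lemma in which $E_0$ does pass through one or two Wahl points. So the Diff-based cap on the number of singular points never gets off the ground, and the local analysis you flag as ``the main obstacle'' is precisely the part that is missing. The paper avoids the different entirely: for $E_t^2<-2$ it observes that $K_{\mathcal{Y}_0}$ is relatively ample on a neighborhood of $E_0$ over $\mathcal{X}_0$ and quotes \cite[Lemma 3.14]{KSB88}, which directly yields at most two singular points with $E_0$ an analytic coordinate axis (hence plt) at each; the borderline case $E_t^2=-2$, where $K_{\mathcal{Y}_0}\cdot E_0=0$, is handled separately. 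To repair your route you would have to determine $k$ and redo the adjunction with the corrected intersection numbers, or simply substitute the citation to \cite{KSB88}.
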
 

\begin{proof}
Note that is a $\qq$-Gorenstein smoothing $\mathcal{Y}\to\mathbb{D}$. Since $\mathcal{Y}_0$ contains at most T-singularities and $\rho(\mathcal{Y}_t)=\rho(\mathcal{Y}_0)$, Noether's formula \ref{Noether} implies that $\mathcal{Y}_0$ has at most Wahl singularities. Moreover, as $H^2(\mathcal{Y}_0,\oo_{\mathcal{Y}_0})=H^1(\mathcal{Y}_0,\oo_{\mathcal{Y}_0})=0$, the short exact sequence \ref{shortexact} yields an isomorphism 
$\Pic(\mathcal{Y}_t)\cong \Pic(\mathcal{Y}_0)$
for $t\neq 0$, induced by the specialization map into $\Cl(\mathcal{Y}_0)$. Let $E$ be a prime exceptional divisor, it follows that the generic point $t$ of $\mathbb{D}$, the fiber $E_t=E\cap \mathcal{Y}_t$ is irreducible. Moreover, the condition $a_E(\mathcal{X})\leq 1$ implies that the pair $(\mathcal{Y}_t,E_t)$ is log canonical. Hence, by the classification of two-dimensional log canonical pairs with reduced boundary \cite[Proposition 3.2.7]{Ale92}, we deduce that $E_t$ is a smooth rational curve. If $E_0$ is reducible, say $E_0=D_1+D_2$, then $D_1\cdot D_2>0$ and thus $D_1,D_2$ are linearly independent in $\Pic(\mathcal{Y}_0)\otimes_{\zz}\qq$. This contradicts the equality $\rho(\mathcal{Y}_t/\mathcal{X}_t)=\rho(\mathcal{Y}_0/\mathcal{X}_0)=\rho(\mathcal{Y}/\mathcal{X})$ and consequently $E_0$ must be irreducible. \\
Furthermore, by \cite[Lemma 2]{Man91} the restriction map $\Pic(\mathcal{Y})\to \Pic(\mathcal{Y}_0)$ is an isomorphism. Hence, the intersection product is preserved under the specialization map $\Pic(\mathcal{Y}_t)\to\Cl(\mathcal{Y}_0)$. Indeed, given Cartier divisor $L$ in $\mathcal{Y}_0$, there exists $\mathcal{L}\in \Pic(\mathcal{Y})$ such that $\mathcal{L}|_{\mathcal{Y}_0}=L$. Then for $m\gg 0$, $m(\mathcal{L}|_{\mathcal{Y}_t}\cdot E_t)=m(\mathcal{L}\cdot E\cdot \mathcal{Y}_t)=m(\mathcal{L}\cdot E\cdot \mathcal{Y}_0)=m(L\cdot kE_0)$, where $E|_{\mathcal{Y}_0}=kE_0$ for some $k\geq 1$. If $E_t^2<-2$, then for $\pi$ restricted to a neighborhood $Z_0$ of $E_0$, it follows that $K_{Z_0}$ is $\pi|_{Z_0}$-ample. By \cite[Lemma 3.14]{KSB88}, $E_0$ contains at most two singular points and, near each $P_i$, the curve $E_0$ forms an analytic coordinate axis. In the case $E_t^2=-2$, then $E_0\cdot K_{\mathcal{Y}_0}=0$. In this situation, $E_0$ is either smooth or contains exactly two Wahl singularities of the same type $P$. Again, $E_0$ is forced to form a toric coordinate axis near $P_i$.
\end{proof}

\begin{lemma}\label{lem:sing-central-fiber}
Let $\mathcal{X}\rightarrow \mathbb{D}$ be a $\qq$-Gorenstein degeneration of a rational normal projective surface. Assume that $\mathcal{X}_0$ has log terminal singularities and $\rho(\mathcal{X}_t)=\rho(\mathcal{X}_0)$.
Then, the limit in $\mathcal{X}_0$ of any quotient singularity of $\mathcal{X}_t$ is a quotient singularity whose dual graph has the same Dynkin type ($A$, $D$ or $E$). Furthermore, the singularity basket of $\mathcal{X}_0$ consists of the singularity limits of $\mathcal{X}_t$ and possibly some Wahl singularities.
\end{lemma}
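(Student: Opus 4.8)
The plan is to pull the whole degeneration back to a simultaneous minimal resolution and feed it into Lemma~\ref{lem:min-resol-horizontal}. After shrinking $\mathbb D$, inversion of adjunction for the reduced $\qq$-Cartier divisor $\mathcal X_0\subset\mathcal X$ shows $(\mathcal X,\mathcal X_0)$ is plt, so $\mathcal X\to\mathbb D$ is a klt degeneration. First I would take a log resolution $\mathcal W\to\mathcal X$ and run a $K_{\mathcal W}$-MMP over $\mathcal X$, obtaining a $\qq$-factorial $\pi\colon\mathcal Y\to\mathcal X$ with $K_{\mathcal Y}$ nef over $\mathcal X$, hence with every surviving $\pi$-exceptional divisor $E$ satisfying $a_E(\mathcal X)\le 1$. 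Any vertical $\pi$-exceptional divisor $F$ has $a_F(\mathcal X)=a_F(\mathcal X,\mathcal X_0)+\mult_F(\pi^{*}\mathcal X_0)>0+1=1$, so it is contracted; thus all $\pi$-exceptional divisors are $\qq$-Cartier and horizontal over $\mathbb D$. For general $t$, $\mathcal Y_t\to\mathcal X_t$ is the minimal resolution of the quotient surface $\mathcal X_t$, so $\mathcal Y_t$ is smooth and $\mathcal Y\to\mathbb D$ is a $\qq$-Gorenstein smoothing; all hypotheses of Lemma~\ref{lem:min-resol-horizontal} are met.

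Next I would fix a quotient singularity $P_t\subset\mathcal X_t$, let $\Sigma\subset\mathcal X$ be the closure of the horizontal curve of such singularities, and set $P_0=\Sigma\cap\mathcal X_0$ (a point, and a quotient singularity since $\mathcal X_0$ is klt). The exceptional surface $\pi^{-1}(\Sigma)$ dominates $\Sigma$ with general fiber the exceptional configuration $\Gamma_t$ of the minimal resolution of $P_t$, and fiber over $P_0$ a configuration $\Gamma_0=\{E_{i,0}\}$ contracted to $P_0$ by $\mathcal Y_0\to\mathcal X_0$. By Lemma~\ref{lem:min-resol-horizontal} each $E_{i,0}$ is irreducible; the intersection-preservation established in its proof (the specialization $\Pic(\mathcal Y_t)\to\Cl(\mathcal Y_0)$ is an isomorphism onto a sublattice preserving the intersection form, using reducedness of $\mathcal Y_0$ so that $E_i|_{\mathcal Y_0}=E_{i,0}$) gives $E_{i,0}^{2}=E_{i,t}^{2}\in\zz$ and $E_{i,0}\cdot E_{j,0}=E_{i,t}\cdot E_{j,t}$; so $\Gamma_0$ is negative definite with the same weighted dual graph as the minimal resolution of $P_t$. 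The same lemma also tells me $\mathcal Y_0$ has at worst Wahl singularities, at most two on each $E_{i,0}$, each of which is a toric coordinate axis there.

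The crux is to rule out Wahl singularities of $\mathcal Y_0$ lying on $\Gamma_0$. Suppose $Q=\tfrac1{n^{2}}(1,na-1)$ were one, sitting on $E_{i,0}$. Resolving it drops the self-intersection of the proper transform of $E_{i,0}$ by the coefficient $\delta_Q\in(0,1)$ of the relevant exceptional curve in the pullback of the axis $E_{i,0}$ (the two Wahl axis coefficients are $\tfrac{na-1}{n^{2}}$ and $\tfrac{n(n-a)-1}{n^{2}}$, since the reverse Hirzebruch--Jung continued fraction of $\tfrac1{n^2}(1,na-1)$ is that of $\tfrac1{n^2}(1,n(n-a)-1)$). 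Resolving all Wahl points of $\mathcal Y_0$ on $\Gamma_0$ yields a smooth surface on which the proper transform of $E_{i,0}$ has integer self-intersection $E_{i,t}^{2}-\sum_Q\delta_Q$; since $E_{i,t}^{2}\in\zz$ and there are at most two such $Q$ with each $\delta_Q\in(0,1)$, we get $\sum_Q\delta_Q\in\{0,1\}$, and the value $1$ is excluded because two Wahl axis coefficients never sum to $1$: writing $(n_1a_1-1)n_2^{2}+(n_2a_2-1)n_1^{2}=n_1^{2}n_2^{2}$ and reducing mod $n_1^{2}$ and $n_2^{2}$ forces $n_1=n_2=:n$, hence $n\mid 2$, hence $a_1+a_2=3$ with $0<a_j<n=2$, a contradiction. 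Therefore $\mathcal Y_0$ is smooth along $\Gamma_0$, so near $P_0$ the morphism $\mathcal Y_0\to\mathcal X_0$ is a resolution whose exceptional set is negative definite with all self-intersections $\le-2$, i.e.\ the minimal resolution of $P_0$; thus $P_0$ and $P_t$ have the same weighted resolution graph, in particular the same Dynkin type.

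Finally, for the basket statement: let $x_0\in\mathcal X_0$ be singular. If $x_0$ lies in the image of $\pi^{-1}$ of a horizontal curve of singularities of $\mathcal X$, it is a limit point $P_0$ as above. Otherwise no $\pi$-exceptional divisor lies over $x_0$, so $\pi$ is an isomorphism near $x_0$; hence $\mathcal Y_0$ — and therefore $\mathcal X_0$ — has a Wahl singularity there, and near $x_0$ the family $\mathcal Y\cong\mathcal X$ is a $\qq$-Gorenstein smoothing of a Wahl singularity, whose total space is a terminal threefold, so $\mathcal X_t$ is smooth near $x_0$ and $x_0$ is not the limit of any singularity of $\mathcal X_t$. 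This yields the second sentence. The hard part throughout is the arithmetic step of the third paragraph: one must be careful that $E_{i,0}^{2}=E_{i,t}^{2}$ holds as an equality of integers (which needs $E_i|_{\mathcal Y_0}$ to be reduced) and then establish that two Wahl axis coefficients cannot sum to a positive integer; if one wants only the weaker stated conclusion, the specialization isometry already pins down the Dynkin type of $P_0$ via the orbifold fundamental group of the contracted configuration.
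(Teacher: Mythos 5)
Your overall strategy (build a simultaneous minimal resolution $\mathcal Y\to\mathcal X$, feed it into Lemma~\ref{lem:min-resol-horizontal}, and use the specialization of intersection numbers) is the same as the paper's, but the crux of your argument --- the third paragraph --- rests on a false premise. You assert that reducedness of $\mathcal Y_0$ forces $E_i|_{\mathcal Y_0}=E_{i,0}$ to be reduced, hence $E_{i,0}^2=E_{i,t}^2\in\zz$. This fails exactly in the case you are trying to rule out: if $E_{i,0}$ passes through a Wahl singularity $\tfrac1{n^2}(1,na-1)$ of $\mathcal Y_0$ as a coordinate axis, then $E_i$ is only $\qq$-Cartier there and its restriction to the (reduced!) central fiber is $nE_{i,0}$. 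A local model is $\mathcal Y=\{xy=z^n+t\}/\mu_n$ with $E=\{x=0\}\cap\mathcal Y$: here $E$ is prime, horizontal and $\qq$-Cartier, $\mathcal Y_0$ is reduced, yet $E|_{\mathcal Y_0}=n\cdot\{x=z=0\}/\mu_n$. Consequently $E_{i,0}^2=E_{i,t}^2/n^2$ is not an integer, your ``drop by $\sum_Q\delta_Q\in\zz$'' computation has no content, and the Wahl points on $\Gamma_0$ cannot be excluded. Indeed the conclusion you are aiming for --- that $\mathcal Y_0$ is smooth along $\Gamma_0$, so $P_0$ and $P_t$ have the \emph{same weighted} resolution graph --- is stronger than the lemma and false in general (this is why Corollary~\ref{cor:const-sing} needs extra hypotheses, and why it excludes $A_n$ points); the paper's proof explicitly allows the nodes of $\Gamma_0$ to be Wahl singularities of $\mathcal Y_0$, with both adjacent components acquiring multiplicity $n$ in the restriction.

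The paper instead argues as follows: the specialization isometry gives $E_t\cdot E'_t=(E|_{\mathcal Y_0})\cdot(E'|_{\mathcal Y_0})$, so whenever $E_{i,t}\cdot E_{j,t}=1$ the curves $E_{i,0},E_{j,0}$ still meet, forming a toric boundary germ at a smooth or Wahl point of $\mathcal Y_0$; thus only the \emph{incidence graph} (chain versus fork, i.e.\ the Dynkin type) of the contracted configuration is preserved, which is all the lemma claims. Your first, second and last paragraphs are essentially compatible with this (modulo the reducedness slip propagating into ``same weighted dual graph'' in paragraph two, and modulo checking that your MMP output really restricts to the minimal resolution on general fibers rather than some other model with $a_E\le 1$), and your closing remark about falling back on the specialization isometry gestures at the correct argument, but as written the proof of the first assertion of the lemma is not established.
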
 
\begin{proof}
By Lemma \ref{extraction}, there exists a projective birational morphism $\mathcal{Y}\to\mathcal{X}$ such that, for every $t\neq 0$ $\mathcal{Y}_t\to \mathcal{X}_t$ is the minimal resolution of $\mathcal{X}_t$. By construction, the exceptional divisors $E$ of this morphism are horizontal over $\mathcal{X}$ and satisfy $a_E(\mathcal{X})<1$. Hence, by Lemma \ref{lem:min-resol-horizontal}, each curve $E_0=E_0|_{\mathcal{Y}_0}$ contains at most two singular points $P_0,P_1$. Since $\Pic(\mathcal{Y}_t)\to \Cl(\mathcal{Y}_0)$ preserve products, it follows that $E_t\cdot E_t^\prime=E_0\cdot E_0^\prime$ for any pair $E,E^\prime$ of exceptional divisors. This implies that  over any singular point $P$ of $\mathcal{X}_t$, if $E_t\cdot E^\prime_t$=1, then $(E_0+E_0^\prime)_{red}$ forms a toric boundary near some Wahl singularity $P_0=\frac{1}{n^2}(1,na-1)$. In particular, it follows that the multiplicities of $E_0$ and $E_0^\prime$ are equal to $n$. This implies that for any singular point $P\in\mathcal{X}_t$, the exceptional divisors in $\mathcal{Y}_0$ preserve the Dynkin type of its dual graph, implying that $P$ degenerates to a singular point $P_0$ with the same Dynkin type. By the same argument of Lemma \ref{lem:min-resol-horizontal}, $\mathcal{X}_0$ may also acquire some additional Wahl singularities. 
\end{proof}

\begin{lemma}\label{lem:rigid-sing}
The singularities in the baskets $\mathcal{F}_i$ with $i\in \{1,\dots,4\}$ are $\qq$-Gorenstein rigid, except for the cases $[4],[5,2],[3,3],[6,2,2]$ and $[2,4,2]$. In the setting of Lemma \ref{lem:sing-central-fiber}, if $\mathcal{X}_0$ contains a singularity of type $[2,4,2]$, then $\mathcal{X}_t$ must contain a point of the same isomorphism type for $t\neq 0$.
\end{lemma}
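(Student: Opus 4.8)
The plan is to translate $\mathbb{Q}$-Gorenstein rigidity into the combinatorics of $M$-resolutions. By the Koll\'ar--Shepherd-Barron description of $\mathbb{Q}$-Gorenstein deformations of quotient surface singularities~\cite{KSB88} (and the subsequent refinements in terms of $P$- and $M$-resolutions), the $\mathbb{Q}$-Gorenstein deformation space of a cyclic quotient singularity $\tfrac1m(1,q)$ is reduced and its irreducible components correspond to the $M$-resolutions; the minimal resolution is always one of them, and since the exceptional chain of the minimal resolution of a cyclic quotient singularity is a rigid configuration of $\mathbb{P}^1$'s it contributes only the trivial deformation. Hence such a singularity is $\mathbb{Q}$-Gorenstein rigid if and only if it admits no $M$-resolution carrying a Wahl singularity. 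So the statement amounts to: exhibit such an $M$-resolution for each of the five listed chains, and rule it out for every other member of $\mathcal{F}_1,\dots,\mathcal{F}_4$.

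The exceptions are easy to place. The chains $[4]=\tfrac14(1,1)$, $[5,2]=\tfrac19(1,2)$ and $[6,2,2]=\tfrac1{16}(1,3)$ are themselves Wahl singularities by Remark~\ref{Wahldeg}, hence $\mathbb{Q}$-Gorenstein smoothable; $[3,3]=\tfrac18(1,3)$ is a $T$-singularity of type $d=2$, and it carries the $M$-resolution having two $\tfrac14(1,1)$ Wahl points joined by a single $(-1)$-curve $C$ with $K\cdot C=0$, which is not the minimal resolution; and $[2,4,2]=\tfrac1{12}(1,7)$, although not a $T$-singularity, carries the $M$-resolution $\mathcal{Y}\to X$ obtained by contracting the central $(-4)$-curve of the minimal resolution $[2,4,2]$ to a $\tfrac14(1,1)$ point, the two $(-2)$-curves becoming its toric boundary; one checks $K_{\mathcal{Y}}$ is ample over $X$, and the corresponding $\mathbb{Q}$-Gorenstein deformation has general fibre $\tfrac13(1,1)$.

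For every other chain I would run the case analysis along each of the four one-parameter families $[3,2^k]$, $\{[4,2^k],[2,3,2^k]\}$, $\{[5,2^k],[2,2,3,2^k]\}$ and $\{[6,2^k],[2,2,2,3,2^k],[2,4,2^k],[3,3,2^k]\}$. An $M$-resolution other than the minimal one produces, after contracting a $(-1)$-curve, a shorter defining chain together with a Wahl point; Remark~\ref{Wahldeg} and the toric description of the $\mathcal{F}_i$ keep this bounded, since the relevant Wahl points have orbifold index at most $6$ (they are $[4]$, $[5,2]$ or $[6,2,2]$), and translating the resulting relations into continued-fraction identities between the entries of the chain and the invariants of the Wahl point, a finite computation leaves only the five exceptional chains. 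I expect this uniform bookkeeping of $M$-resolutions across the infinite families to be the main point of the argument; everything before it is formal.

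Finally, for the last assertion I would work in the situation of Lemma~\ref{lem:sing-central-fiber}. Since $[2,4,2]$ is not a Wahl singularity, that lemma forces it to be the specialization of a genuine quotient singularity $P$ on $\mathcal{X}_t$, of the same Dynkin type (here $A$, as $[2,4,2]$ is a chain); moreover, by the proof of Lemma~\ref{lem:sing-central-fiber}, which uses $\rho(\mathcal{X}_t)=\rho(\mathcal{X}_0)$ to keep the exceptional curves irreducible (Lemma~\ref{lem:min-resol-horizontal}), the minimal resolution of the $[2,4,2]$-point is obtained from the chain $[c_1,\dots,c_s]$ of $P$ by inserting, between consecutive exceptional curves, the resolution chains of the Wahl singularities of $\mathcal{Y}_0$ lying on them. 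As $[2,4,2]$ has length $3$ and the available Wahl chains have lengths $1$, $2$ and $3$ respectively, the only possibilities are $s=3$ with nothing inserted, which gives $P\cong[2,4,2]$, or $s=2$ with a single $[4]$ inserted; in the latter case a direct self-intersection computation shows that the two exceptional curves of $\mathcal{X}_t$ adjacent to the inserted $(-4)$-curve acquire self-intersection $-1$, so $P$ would be a smooth point, which is impossible since $[2,4,2]$ is not a $T$-singularity and hence not $\mathbb{Q}$-Gorenstein smoothable. Therefore $\mathcal{X}_t$ carries a singularity isomorphic to $[2,4,2]$.
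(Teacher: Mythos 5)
Your framework is the right one in principle (a non-T cyclic quotient singularity is $\qq$-Gorenstein rigid iff its only $M$-resolution is the minimal resolution), and your treatment of the five exceptional chains is correct. But the main assertion of the lemma --- rigidity of \emph{all remaining} members of the four infinite families --- is not actually established. Your reduction to ``a finite computation'' rests on the claim that the Wahl points occurring on an $M$-resolution of a chain in $\mathcal{F}_1,\dots,\mathcal{F}_4$ have orbifold index at most $6$, citing Remark~\ref{Wahldeg}; that remark classifies which members of the baskets \emph{are themselves} Wahl singularities, which is a different statement, and gives no a priori control on the Wahl points of an $M$-resolution of, say, $[6,2^k]$ for large $k$ (recall also that $M$-resolutions need not be dominated by the minimal resolution --- e.g.\ the one you exhibit for $[3,3]$ requires a blow-up first --- so they are not enumerated by contracting sub-chains). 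Without a concrete enumeration (e.g.\ via the Christophersen--Stevens/Behnke--Christophersen zero continued fractions) the case analysis is not even set up as a finite problem. The paper sidesteps all of this by quoting the closed-form criterion of \cite{ACC16}: writing $k=\gcd(m,q+1)$ and $m=kr$, the singularity $\tfrac1m(1,q)$ is $\qq$-Gorenstein rigid iff $k<r$. This turns each family into a one-line gcd computation (carried out there for $[2,4,2^l]$, where only $l=1$ fails). You should either invoke that criterion or supply the $M$-resolution enumeration for the infinite families.

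For the final assertion your route differs from the paper's: the paper enumerates the $M$-resolutions of $[2,4,2]$ directly via the zero continued fractions of \cite{BC94} (there are exactly two, the minimal resolution and one whose exceptional locus is a reducible curve through a $\tfrac14(1,1)$ point) and excludes the latter because Lemma~\ref{lem:min-resol-horizontal} forces exceptional curves to specialize irreducibly. Your alternative --- enumerating limit chains via Lemma~\ref{lem:sing-central-fiber} --- is a reasonable shape of argument, but the decisive step is asserted rather than proved: the claim that in the ``$s=2$ with a $[4]$ inserted'' case the adjacent curves of $\mathcal{X}_t$ acquire self-intersection $-1$ does not follow from any computation you give, and it ignores the multiplicity bookkeeping of Lemma~\ref{lem:min-resol-horizontal} (a curve forming a coordinate axis at a $\tfrac1{n^2}(1,na-1)$ point restricts from $\mathcal{Y}$ with multiplicity $n$, so $E_{i,t}^2=n^2E_{i,0}^2$, which here yields $E_{i,t}^2=4\cdot(-\tfrac74)=-7$ rather than $-1$; one then gets a contradiction from the resulting index $48$ exceeding $12$, not from a $(-1)$-curve appearing). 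So the case can be excluded, but not by the computation you state; as written this step is a gap.
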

\begin{proof}
For integers $0<q<m$ with $\operatorname{gcd}(m,q)=1$, set $k=\operatorname{gcd}(m,q+1)$, so that we can write $m=kr$ and $q+1=kc$. Then, the cyclic quotient singularity $\frac{1}{m}(1,q)$ can be expressed as $\frac{1}{kr}(1,kc-1)$. By \cite{ACC16}, this singularity is $\qq$-Gorenstein rigid if and only if $k<r$. We apply this criterion to the singularities of the form $[2,4,2^l]$ with $l\geq 0$, leaving the remaining cases to the reader. The continued fraction $[2,4,2^l]$ corresponds to $\frac{7+5l}{4+3l}$. Hence, we compute,
\[
k=\operatorname{gcd}(7+5l,5+3l) = 
\begin{cases} 
4 & \text{if } l \equiv 1 \pmod{4},\\
2 & \text{if } l \equiv 3 \pmod{4},\\
1 & \text{if } l \equiv 0,2 \pmod{4}.
\end{cases}
\]
We observe that the only case with $k\geq r$ is precisely when $l=1$. \newline
For the singularity $P$ of type $[2,4,2]$ we compute all possible partial resolutions $f:(C\subset Z)\to (\mathcal{X}_0,P)$ such that $Z$ contains at most Wahl singularities and $K_Z$ is $f$-nef. According to \cite[Section 1.3.2]{BC94}, there are just two such resolutions, corresponding to the zero continued fractions $[1,2,1]$ and $[2,1,2]$ (see for example \cite[Section 2]{UZ24}). Therefore, $f$ is either the minimal resolution of $P$ or $f$ contracts a reducible curve $C$ consisting of two components intersecting at a $\frac{1}{4}(1,1)$ singularity. The latter possibility is excluded in the setting of \ref{lem:min-resol-horizontal}, since we consider deformations such that $\rho(\mathcal{X}_t)$ is constant. Hence, the claim follows.
\end{proof}

In the following theorem, we make use of the baskets in Definition~\ref{basket}.

\begin{theorem}\label{thm:1-comp}
Let $\pi\colon \mathcal{X}\rightarrow \mathbb{D}$ be a Fano projective morphism where $\mathcal{X}_0$ is reduced in $\mathcal{X}$ and
$(\mathcal{X},\mathcal{X}_0)$ plt.
Assume that the following conditions hold for the general fiber $\mathcal{X}_t$ with $t\neq 0$:
\begin{enumerate}
\item $\mathcal{X}_t$ is a toric surface of Picard rank one;
\item ${\rm mld}(\mathcal{X}_t)<\frac{1}{6}$; 
\item $\mathcal{X}_t$ does not have singularities in the baskets 
$\mathcal{F}_1,\dots,\mathcal{F}_4,\mathcal{D}$.
\end{enumerate}
Then, the pair $(\mathcal{X},\mathcal{X}_0)$ admits a $1$-complement over $\mathbb{D}$.
\end{theorem}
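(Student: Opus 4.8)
The plan is to produce the $1$-complement explicitly, as the flat limit over $\mathbb{D}$ of the toric boundaries of the general fibers, transported to a simultaneous minimal resolution; the whole difficulty is checking that the limit is log canonical. Since $\pi$ is Fano, $-K_{\mathcal{X}_0}$ is ample, so $\mathcal{X}_0$ is a klt del Pezzo surface and $-K_{\mathcal{X}_0}$ is big; and a $1$-complement of $(\mathcal{X},\mathcal{X}_0)$ over $\mathbb{D}$ is an effective divisor $\mathcal{B}$ with $(\mathcal{X},\mathcal{X}_0+\mathcal{B})$ log canonical and $K_{\mathcal{X}}+\mathcal{X}_0+\mathcal{B}\sim_{\mathbb{D}}0$. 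After a standard reduction (passing to a $\mathbb{Q}$-factorialization, which leaves the already $\mathbb{Q}$-factorial fibers $\mathcal{X}_t$ unchanged) one may assume $\rho(\mathcal{X}_t)=\rho(\mathcal{X}_0)$, so that Lemmas~\ref{lem:min-resol-horizontal}, \ref{lem:sing-central-fiber} and \ref{lem:rigid-sing} apply.

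First I would fix the ambient model. Using Lemma~\ref{extraction} --- applicable by condition~$(2)$ --- extract the minimal-resolution divisors of the general fibers to obtain a projective birational morphism $p\colon\mathcal{Y}\to\mathcal{X}$ with $\mathcal{Y}$ $\mathbb{Q}$-factorial, with $\mathcal{Y}_t\to\mathcal{X}_t$ the minimal resolution of the toric surface $\mathcal{X}_t$ (so $\mathcal{Y}_t$ is a smooth projective toric surface), with all $p$-exceptional divisors $E_1,\dots,E_r$ horizontal over $\mathbb{D}$ and $a_{E_j}(\mathcal{X})<1$, and with $\mathcal{Y}_0$ irreducible equal to $\pi_{\mathcal{Y}}^{*}\{0\}$. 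Then $\mathcal{Y}\to\mathbb{D}$ is a $\mathbb{Q}$-Gorenstein smoothing, so by Noether's formula~\eqref{Noether} the surface $\mathcal{Y}_0$ has only Wahl singularities; by Lemma~\ref{lem:min-resol-horizontal} each $E_{j,0}:=E_j\cap\mathcal{Y}_0$ is an irreducible rational curve passing through at most two Wahl points of $\mathcal{Y}_0$ and forming a toric coordinate axis near each, with $(\mathcal{Y}_0,E_{j,0})$ plt there; and by Lemma~\ref{lem:sing-central-fiber} intersection numbers among the exceptional curves are preserved under specialization, while every singularity of $\mathcal{Y}_0$ is either a same-Dynkin-type limit of a singularity of $\mathcal{X}_t$ or a newly created Wahl singularity.

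Next I would write the boundary. On $\mathcal{Y}_t$ the reduced toric boundary decomposes as $\partial\mathcal{Y}_t=\sum_{j}E_{j,t}+\widetilde{B}_{1,t}+\widetilde{B}_{2,t}+\widetilde{B}_{3,t}$, where the $\widetilde{B}_{i,t}$ are the strict transforms of the three torus-invariant curves of $\mathcal{X}_t$; it is a log-smooth $1$-complement of $\mathcal{Y}_t$, and $p_t\colon(\mathcal{Y}_t,\partial\mathcal{Y}_t)\to(\mathcal{X}_t,\mathcal{B}_t)$ is crepant, so each $E_{j,t}$ is a log canonical place of $(\mathcal{X}_t,\mathcal{B}_t)$. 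Let $\widetilde{B}_i\subset\mathcal{Y}$ be the closure of $\bigcup_{t\neq0}\widetilde{B}_{i,t}$ and put $\mathcal{B}_{\mathcal{Y}}:=\sum_{j}E_j+\widetilde{B}_1+\widetilde{B}_2+\widetilde{B}_3$, so $\mathcal{B}_{\mathcal{Y}}$ restricts to $\partial\mathcal{Y}_t$ over $\mathbb{D}^{*}$; since $K_{\mathcal{Y}}+\mathcal{Y}_0+\mathcal{B}_{\mathcal{Y}}$ restricts to $0$ on every fiber and $\mathcal{Y}_0$ is irreducible, one gets $K_{\mathcal{Y}}+\mathcal{Y}_0+\mathcal{B}_{\mathcal{Y}}\sim_{\mathbb{D}}0$, an index-one log Calabi--Yau structure. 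Pushing forward, $\mathcal{B}:=p_{*}\mathcal{B}_{\mathcal{Y}}$ satisfies $K_{\mathcal{X}}+\mathcal{X}_0+\mathcal{B}\sim_{\mathbb{D}}0$ with $\mathcal{B}|_{\mathcal{X}_0}=\mathcal{B}_0:=p_{0*}\mathcal{B}_{\mathcal{Y},0}$; since the $E_j$ are log canonical places of $(\mathcal{X},\mathcal{X}_0+\mathcal{B})$ (checked on the general fiber), the morphism $p$, hence $p_0\colon(\mathcal{Y}_0,\mathcal{B}_{\mathcal{Y},0})\to(\mathcal{X}_0,\mathcal{B}_0)$, is crepant. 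Thus it suffices to prove that $(\mathcal{Y}_0,\mathcal{B}_{\mathcal{Y},0})$ is log canonical: this forces $(\mathcal{X}_0,\mathcal{B}_0)$ log canonical, whence $(\mathcal{X},\mathcal{X}_0+\mathcal{B})$ is log canonical as well --- over $\mathbb{D}^{*}$ it is the toric pair, and near $\mathcal{X}_0$ by inversion of adjunction from $(\mathcal{X},\mathcal{X}_0)$ plt and $(\mathcal{X}_0,\mathcal{B}_0)$ log canonical --- so $\mathcal{B}$ is the required $1$-complement.

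The remaining point --- that $(\mathcal{Y}_0,\mathcal{B}_{\mathcal{Y},0})$, with $\mathcal{B}_{\mathcal{Y},0}=\sum_{j}E_{j,0}+\widetilde{B}_{1,0}+\widetilde{B}_{2,0}+\widetilde{B}_{3,0}$ the flat limit of the cycle of rational curves $\partial\mathcal{Y}_t$, is log canonical --- is the main obstacle. Away from $\operatorname{Sing}(\mathcal{Y}_0)$, log canonicity could only fail if a component acquired a cuspidal or worse-than-nodal point; this is ruled out by condition~$(2)$ via the threshold $5/6$ of Lemma~\ref{lcpairlemma} together with the divisor of log discrepancy $<\tfrac16$ extracted in Lemma~\ref{extraction}. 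At a Wahl point of $\mathcal{Y}_0$ that is a limit of a singularity of $\mathcal{X}_t$, Lemma~\ref{lem:sing-central-fiber} shows $\mathcal{B}_{\mathcal{Y},0}$ is locally a toric boundary, which is fine. The delicate case is a newly created Wahl singularity of $\mathcal{Y}_0$ lying on $\mathcal{B}_{\mathcal{Y},0}$ in a non-toric fashion, and this is exactly what condition~$(3)$ forbids: by Remark~\ref{Wahldeg}, such a bad Wahl point, after a suitable toric contraction, would force the singularity of $\mathcal{X}_t$ beneath it to lie in one of the baskets $\mathcal{F}_1,\dots,\mathcal{F}_4$, or to be a $D$-type cyclic singularity in $\mathcal{D}$, while the $\mathbb{Q}$-Gorenstein rigidity of Lemma~\ref{lem:rigid-sing} disposes of the finitely many borderline isomorphism types; since hypothesis~$(3)$ excludes all of these, $(\mathcal{Y}_0,\mathcal{B}_{\mathcal{Y},0})$ is log canonical --- indeed dlt with dual complex $\mathbb{S}^1$ --- and the theorem follows. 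The technical heart is thus the combinatorial case analysis, parallel in spirit to Theorem~\ref{thm:toric-deg}, of which degenerating baskets of cyclic quotient singularities can deposit a new Wahl point on the limit of the toric boundary.
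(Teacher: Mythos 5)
Your strategy is genuinely different from the paper's: you propose to take the flat limit of the full toric boundary of $\mathcal{X}_t$ on a simultaneous minimal resolution and check that this limit is log canonical, whereas the paper extracts a \emph{single} divisor (the one computing ${\rm mld}(\mathcal{X}_t)$), analyzes the anti-nef threshold of $(\mathcal{Y}_0,dE_0)$ and the resulting extremal contraction $\varphi\colon\mathcal{Y}_0\to\mathcal{Y}_0'$, builds a $1$-complement on the central fiber by lifting a complement from the curve $E_0'$ via adjunction and Kawamata--Viehweg vanishing, and only then extends it over $\mathbb{D}$ using inversion of adjunction and \cite[Proposition 3.7]{PS09}. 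In particular, the paper's complement is \emph{not} the limit of the toric boundaries; it is a divisor $D\in|-(K_{\mathcal{Y}_0'}+E_0')|$ chosen on the central fiber.

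The gap in your proposal is that the central claim --- that $(\mathcal{Y}_0,\mathcal{B}_{\mathcal{Y},0})$ is log canonical --- is asserted rather than proved, and the sketch you give does not actually engage the hypotheses in the way they must be used. Concretely: (a) Lemmas~\ref{lem:min-resol-horizontal} and~\ref{lem:sing-central-fiber} control only the $p$-exceptional divisors $E_j$ (horizontal, with $a_{E_j}(\mathcal{X})\le 1$); they say nothing about the limits $\widetilde{B}_{i,0}$ of the three non-exceptional torus-invariant curves, which a priori could become reducible, collide (making $\mathcal{B}_{\mathcal{Y},0}$ non-reduced), be permuted by monodromy so that the closures $\widetilde{B}_i$ are not well defined as you state them, or fail to be toric axes at newly created Wahl points. (b) Your appeal to condition~(2) is only through the threshold $5/6$ of Lemma~\ref{lcpairlemma}, but in the paper ${\rm mld}(\mathcal{X}_t)<\tfrac16$ enters through an explicit computation of anti-nef thresholds ($d=\tfrac12,\tfrac12,\tfrac23,\tfrac34$ for $[4],[3,3],[5,2],[6,2,2]$) compared against $1-\lambda>\tfrac56$, forcing the contracted point to have orbifold index at least $6$; nothing in your sketch reproduces or replaces this, so it is not clear why ${\rm mld}<\tfrac16$ rather than, say, ${\rm mld}<1$ suffices for your claim. (c) The basket $\mathcal{D}$ is used in the paper to exclude the case where $E_0'$ carries two $A_1$ points together with $P$ (Step 5), a configuration your argument never encounters or excludes. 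Without a genuine case analysis replacing Steps 3--5 of the paper, the assertion that ``condition (3) forbids exactly the bad Wahl points'' is a restatement of the difficulty, not a resolution of it.
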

\begin{proof}
We break the proof into five steps.\\

\textit{Step 1}: In this step, we construct a partial resolution $\mathcal{Y}\to\mathcal{X}$ of the family $\mathcal{X}\rightarrow \mathbb{D}$ by extracting the curve computing the minimal log discrepancy along the general fibers.\\

Let $Q$ be a singular point of the general fiber $\mathcal{X}_t$
which computes ${\rm mld}(\mathcal{X}_t)$. Consider the extraction of a curve $E_t$ chosen from the minimal resolution of $Q$ attaining the lowest mld. We denote by $\mathcal{Y}_t\to\mathcal{X}_t$ the corresponding projective birational morphism. By Lemma~\ref{extraction}, there exists a plt $\mathbb{Q}$-Gorenstein family $\mathcal{Y}\to \mathbb{D}$ whose general fiber is $\mathcal{Y}_t$ together with a divisorial contraction $(\mathcal{E}\subset \mathcal{Y})\to \mathcal{X}$ over $\mathbb{D}$ such that $\mathcal{E}|_{\mathcal{Y}_t}=E_t$ for $t\neq 0$. 
By Lemma~\ref{lem:min-resol-horizontal}, we know that $E_0=\mathcal{Y}_0\cap \mathcal{E}$ is a smooth rational curve. Indeed, $E_0$ is the normal image
of a smooth rational curve on a horizontal resolution
of $\mathcal{X}\rightarrow \mathbb{D}$.
By Lemma~\ref{lem:sing-central-fiber}, the central fiber $\mathcal{X}_0$ has cyclic quotient singularities and so $E_0$ contracts to a cyclic quotient point, it follows that $E_0$ contains at most two singular points $Q_0,Q_1$ of $\mathcal{Y}_0$ for which the apir $(\mathcal{Y}_0,E_0)$ is plt near $Q_i$ for $(i=0,1)$.\\

\textit{Step 2}: In this step, we study the anti-nef threshold of the pair $(\mathcal{Y}_0,E_0)$ and the induced morphism on the central fiber.\\

We now consider the klt pair $(\mathcal{Y}_0,dE_0)$, where $d$ denotes an anti-nef threshold, i.e., the supremum of $d>0$ for which 
$(\mathcal{Y}_0,dE_0)$ has klt singularities and 
$-(K_{\mathcal{Y}_0}+dE_0)$ is nef. If $d=1$ the theorem claim follows directly from \textit{Step 5} below applied to $Y_0$, since $-(K_{\mathcal{Y}_0}+E_0)$ is nef. Thus, we may assume that $d<1$. By construction, there exists an extremal curve $C$ such that $(K_{\mathcal{Y}_0}+dE_0)\cdot C=0$. We aim to show that the contraction induced by $C$ is a birational morphism. Suppose instead that $C$ is contracted by a $\pp^1$-fibration $\mathcal{Y}_0\to \mathcal{Y}_0^\prime$ where $\mathcal{Y}_0^\prime$ is a curve.
Since $-(K_{\mathcal{Y}_t}+dE_t)$ is nef and $\mathcal{Y}_t$ is toric for $t\neq 0$, it follows that $-(K_{\mathcal{Y}_t}+dE_t)$ is semiample on the general fiber. For $m\gg 0$ and $t\neq 0$, the morphism induced by $|-m(K_{\mathcal{Y}_t}+dE_t)|$ must be birational. Otherwise, it would induce a toric fibration $\mathcal{Y}_t\to \mathcal{Y}^\prime_t$ for which $E_0\cdot F=1$ for a fiber $F$. This would give $F\cdot (K_{\mathcal{Y}_t}+dE_t)=2-d=0$ which contradicts $d<1$. The linear system $|-m(K_{\mathcal{Y}}+d\mathcal{E})|$ is basepoint free, hence it induces a morpshim $\mathcal{Y}\to \mathcal{Y}^\prime=\pp_{\mathbb{D}}(\oo(-m(K_{\mathcal{Y}}+d\mathcal{E})))$ which further induces a flat morphism $\mathcal{Y}^\prime\to \mathbb{D}$. This is a contradiction of upper semicontinuity of fiber dimensions, since ${\rm dim } \mathcal{Y}_0^\prime < {\rm dim  } \mathcal{Y}_t$ for $t\neq 0$. Hence, the contraction $\varphi:\mathcal{Y}_0\to \mathcal{Y}_0 ^\prime$ of $C$ is a birational morphism, for which we define $P=\varphi(C)$.\\

\textit{Step 3}: In this step, we study the geometry of the contraction of the curve $C$ and the singularity of $E_0'$ at the image of $C$.\\ 

By Lemma \ref{lcpairlemma}, since the pair $(\mathcal{Y}_0^\prime, lE_0^\prime)$ is klt for $l>\frac{1}{6}$, it follows that $(\mathcal{Y}_0^\prime, E_0^\prime)$ is log canonical. First, we study the existence of complements for the pair $(\mathcal{Y}_0^\prime, E_0^\prime)$ around $P$ when the pair is lc but not klt at $P$. In this situation, $P$ is a log canonical center of $(\mathcal{Y}_0^\prime, E_0^\prime)$. Then by the classification two-dimensional of log canonical pairs with reduced boundary, we fall into the Cases $9$ and $10$ of Fig 3. in \cite[Proposition 3.2.7]{Ale92} (see also \cite[Theorem 4.15]{KM92}). We study both of them separately:\\ 

\textit{Case 3.1}: We assume that $\mathcal{Y}_0^\prime$
has a $D$ singularity at $P$, i.e., its resolution graph is a D Dynkin diagram.\\

The curve $E_0^\prime$ has exactly one local component through $P$. In this case we have that the singular point $P$ is a D singularity. By \cite[Theorem 4.15]{KM92} in its minimal resolution, the strict transform of $E_0$ intersects transversely the initial curve $\Gamma_0$ in a chain $\Gamma_0-\dots-\Gamma_r$ of rational curves. We denote the $(-2)$ curves forming the fork as $F_0,F_1$, both intersecting the curve $\Gamma_r$. The point $P$ admits a local 2-complement, indeed $2(K_{\mathcal{Y}_0^\prime}+E_0^\prime)\sim 0$. Moreover, since $\varphi^*(K_{\mathcal{Y}_0^\prime}+E_0^\prime)\sim K_{\mathcal{Y}_0}+E_0+aC$, it follows that either $a=\frac{1}{2}$ or $a=1$. The case $a=1$ cannot occur. Indeed, in the minimal resolution of $\mathcal{Y}_0$, the strict transform $\hat{C}$ of $C$ would have to lie in a chain of rational curves obtained by successively blowing up infinitely close points over a single point $x$ belonging to the set
$$\mathcal{B}=\{E_0\cap \Gamma_0,\Gamma_0\cap\Gamma_1,\dots,\Gamma_{r-1}\cap\Gamma_r\}.$$
In this situation, the curve is forced to contain a $D$ singularity, which contradicts condition (3). Also, since $C$ is extremal and $K_{Y_0}\cdot C<0$, the curve $\hat{C}$ satisfies that $\hat{C}^2=-1$. This implies that $C$ cannot appear in the graph of the minimal resolution of the point $P$.\newline
The case $a=\frac{1}{2}$ cannot occur. Indeed, in the minimal resolution of $\mathcal{Y}_0$, the curve $C$ would be contained in a chain of rational curves lying over $\Gamma_r\cap F_0$ or $\Gamma_r\cap F_1$. In this situation, $C$ must contain either a singularity of the form $[3,2,\dots,2]$, belonging to $\mathcal{F}_3$ in the basket of singularities of (3), or a $D$ singularity. In the first case, singularities of type $[3,2,\dots,2]$ are not Wahl, by Lemma \ref{lem:rigid-sing} these singularities cannot appear in $\mathcal{Y}_0$. In the second case a $D$ singularity is also excluded. Therefore, Case 3.1 cannot occur.\\

\textit{Case 3.2:} We assume that $\mathcal{Y}_0^\prime$
has a cyclic quotient singularity at $P$ and $E_0^\prime$
has two local components around $P$.\\
 
The curve $E_0^\prime$ has two local components through $P$. This implies that $E_0$ intersects $C$ in two points and hence $E_0^\prime$ is a rational nodal curve. By the projection formula, we have that $(K_{\mathcal{Y}_0^\prime}+E_0^\prime)\cdot E_0^\prime=(K_{\mathcal{Y}_0}+E_0)\cdot E_0\leq 0$. On the other hand, the adjunction formula gives $(K_{\mathcal{Y}_0^\prime}+E_0^\prime)\cdot E_0^\prime=\operatorname{Diff(0)}_{E_0^\prime}\geq 0$, implying that $(K_{\mathcal{Y}_0^\prime}+E_0^\prime)\cdot E_0^\prime=0$. Since $\rho(\mathcal{Y}_0^\prime)=1$, it follows that $(\mathcal{Y}_0^\prime,E_0^\prime)$ is an index 1 log Calabi-Yau pair.\\

\textit{Case 3.3:} We assume that $(\mathcal{Y}_0^\prime,E_0^\prime)$
is plt near $P$.\\

Now, let $(Y_0^\prime,E_0^\prime)$ be a plt pair near $P$. We proceed to show that $\varphi$ is a formally toric morphism near $P$. By \cite[Proposition 3.2.7]{Ale92} it follows that $(\mathcal{Y}_0^\prime,E_0^\prime)$ is toric near $P$. In particular, there exists a curve $F_0^\prime\subset \mathcal{Y}_0^\prime$ such that $(\mathcal{Y}_0^\prime,E_0^\prime+F_0^\prime)$ is log canonical and $P$ is a log canonical center of this pair. Moreover, in a neighborhood of $P$ the pair $(\mathcal{Y}_0^\prime,E_0^\prime+F_0^\prime)$ is log Calabi-Yau of index 1, so for any exceptional divisor $E$ over $P$ in a log resolution of $(\mathcal{Y}_0^\prime,P)$ the log discrepancy satisfies $a(E,\mathcal{Y}_0^\prime,E_0^\prime+F_0^\prime)\in\mathbb{Z}_{\geq 0}$. By construction $\operatorname{coeff}_{C}(K_{\mathcal{Y}_0}+dE_0-\varphi^*(K_{\mathcal{Y}_0^\prime}+E_0^\prime))=0$, so that $a(C,\mathcal{Y}_0^\prime,dE_0^\prime)=1$. Since $C\cdot((1-d)E_0+F_0)>0$, we deduce that $a(C,\mathcal{Y}_0^\prime,E_0^\prime+F_0^\prime)<1$ and consequently, $a(C,\mathcal{Y}_0^\prime,E_0^\prime+F_0^\prime)=0$. This implies that the pair $(\mathcal{Y}_0,E_0+F_0+C)$ is log Calabi-Yau over $P$. Following~\cite[Definition 3.15]{MS21}, we compute the relative complexity of $(\mathcal{Y}_0,E_0+F_0+C)$ over $P$: 
\begin{equation}
c_P(\mathcal{Y}_0/\mathcal{Y}_0^\prime, E_0+F_0+C)=\operatorname{dim}(\mathcal{Y}_0)+\rho(\mathcal{Y}_0/\mathcal{Y}_0^\prime)-|E_0+F_0+C|.
\end{equation}
In this case, we observe that $c_P(\mathcal{Y}_0/\mathcal{Y}_0^\prime)=0$. Therefore, by \cite[Theorem 1]{MS21}, it follows that $\varphi$ is formally toric at $P$, and consequently the divisor $E_0+F_0+C$ forms a formally toric boundary over $P$. This implies that the only possible singular points of $C$ are $P_0=E_0\cap C$ and $P_1=F_0\cap F$.
We conclude that the pair $(\mathcal{Y}'_0,E'_0)$ is plt.\\

\textit{Step 4:} In this step, we conclude that $\mathcal{Y}^\prime_0$ has at most two singular points along $E^\prime_0$. Further, we argue that either
the pair $(\mathcal{Y}^\prime_0,E^\prime_0)$ is plt or $E^\prime_0$ is a nodal rational curve.\\ 

Recall, we have a projective birational contraction
$\mathcal{Y}_0\rightarrow \mathcal{Y}_0'$
that contracts the curve $C$ to $P$. 
In \textit{Step 3}, we showed that \textit{Case 3.1} above does not occur,
while in \textit{Case 3.2} the pair $(\mathcal{Y}_0',E_0')$ is 
a log Calabi--Yau pair of index one and $E_0'$ is a nodal rational curve.
Thus, it suffices to consider \textit{Case 3.3}, where the pair $(\mathcal{Y}_0^\prime,E_0^\prime)$ is plt. The contraction
$\mathcal{Y}_0\rightarrow \mathcal{Y}_0'$ is formally toric near $P$.
Thus, if $E_0\cap C$ is a singular point, then it follows
that $E'_0$ contains at most two singular points of $\mathcal{Y}'_0$. The step is finished in this case.
From now on, we assume that $E_0\cap C$ is a smooth point. 
We argue that, under our assumptions, the orbifold index of $\mathcal{Y}'_0$ 
at $P$ is at least $6$.

Assume instead that $i_P(\mathcal{Y}'_0)\leq 5$.
Then either $\mathcal{Y}_0'$ is smooth along $C$ or contains at most $A_n$ singularities; 
or else $C$ passes through a singular point $P\cap F_0$
belonging to $\mathcal{F}_i$ with $i\in \{1,\dots,4\}$.
In the case that $\mathcal{Y}_0'$ is smooth along $C$ or passes through an $A_n$ singularity, since $C$ is a $K_{\mathcal{Y}_0}$-negative extremal curve, it follows that $K_{\mathcal{Y}_0}\cdot C=-1$. As $(K_{\mathcal{Y}_0}+dE_0)\cdot C=0$, we obtain that $d=1$ and the theorem follows from \textit{Step 5} below. Now, suppose that $C$ passes through a non $A_n$ singular point of $\mathcal{Y}_0$.
Since $\mathcal{X}_t$ does not have singularities in 
$\mathcal{F}_i$ with $i\in \{1,\dots,4\}$, by Lemma \ref{lem:rigid-sing},
we note that $C$ passes through a Wahl singularity
in the baskets $\mathcal{F}_i$ with $i\in \{1,\dots,4\}$, see Remark \ref{Wahldeg}, or possibly through a T-singularity $[3,3]$. 
Hence, $C$ can only pass through singularities in 
\[
\mathcal{S}:= 
\{ [4],[3,3],[5,2],[6,2,2]\}. 
\]
For each singularity in the basket $\mathcal{S}$, we compute the corresponding nef threshold:
\begin{itemize}
\item If $C$ passes through a singularity $[4]$, then $d=\frac{1}{2}$.
\item If $C$ passes through a singularity $[3,3]$, then $d=\frac{1}{2}$.
\item If $C$ passes through a singularity $[5,2]$, then $d=\frac{2}{3}$.
\item If $C$ passes through a singularity $[6,2,2]$, then $d=\frac{3}{4}$.
\end{itemize}
Let $\lambda={\rm mld}(\mathcal{X}_t)$. By Lemma \ref{lem:sing-central-fiber}, the divisor $-(K_{\mathcal{Y}_0}+(1-\lambda)E_0)$ is nef, which implies that $1-\lambda\leq d$. Since $\lambda<\frac{1}{6}$, this leads to a contradiction. Therefore, $C$ must contract to a point of orbifold index greater than or equal $6$.\\

\textit{Step 5}: We construct a 1-complement of the pair $(\mathcal{Y}_0^\prime,E_0^\prime)$, we argue the existence of a 1-complement of $(\mathcal{X},\mathcal{X}_0)$.\\

Suppose the pair $(\mathcal{Y}_0^\prime,E_0^\prime)$ is plt. First, we analyze the existence of the 1-complement for the case where $Q_0=E_0\cap C$ is a singular point. If the points $P$ and $Q_1$ have orbifold indices $m,m_1$ respectively, we note that the different of $E_0^\prime$ is given by:
\begin{equation}
\operatorname{Diff(0)}_{E_0^\prime}=\left(1-\frac{1}{m}\right)P+\left(1-\frac{1}{m_1}\right)Q_1.
\end{equation}
For the pair $(E_0^\prime,\operatorname{Diff(0)}_{E_0^\prime})$, the 1-complement is given by the divisor $\{P\}+\{Q_1\}$. Since $\rho(Y_0^\prime)=1$, then $-(K_{\mathcal{Y}_0^\prime}+E_0^\prime)$ is
either ample or numerically trivial. As $E_0^\prime\cdot (K_{\mathcal{Y}_0^\prime}+E_0^\prime)<0$, it must be ample. By the Kawamata-Viehweg vanishing theorem, the restriction map $H^0(\mathcal{Y}_0^\prime,-(K_{\mathcal{Y}_0^\prime}+E_0^\prime))\to H^0(E_0^\prime,-(K_{E_0^\prime}+E_0^\prime)|_{E_0^\prime})$ is surjective, so the 1-complement lifts to a divisor $D\in |-(K_{\mathcal{Y}_0^\prime}+E_0^\prime)|$. By inversion of adjunction \cite[Theorem 5.50]{KM92}, it follows that the pair $(\mathcal{Y}_0^\prime, D+E_0^\prime)$ is log canonical, hence it is an index 1 log Calabi-Yau pair. Moreover, we have $a(C,\mathcal{Y}_0^\prime,D+E_0^\prime)=1$ and consequently that $\varphi^*(D)+E_0+C\sim -K_{\mathcal{Y}_0}$. By pushing forward $\varphi^*(D)+E_0+C$ under the map $\mathcal{Y}_0\to \mathcal{X}_0$, we obtain an 1-complement for $\mathcal{X}_0$. By construction, each fiber $\mathcal{X}_t$ is a principal divisor on $\mathcal{X}$ for any $t$. By \cite[Lemma 2.1]{EV85} we have that $K_{\mathcal{X}}|_{\mathcal{X}_t}\sim K_{\mathcal{X}_t}$. In particular, this implies that $\operatorname{Diff}_{\mathcal{X}_0}(0)\sim 0$. Following \cite [Proposition 3.7]{PS09}, we obtain the existence of the 1-complement of the pair $(\mathcal{X},\mathcal{X}_0)$ over $\mathbb{D}$.\\ 
Now suppose that $E_0\cap C$ is a smooth point. If some $Q_i$ is smooth, the existence of the 1-complement of $(\mathcal{Y}_0^\prime,E_0^\prime)$ is again obtained by the previous case. If both $Q_i$ are singular, since $\operatorname{deg}(K_{E_0^\prime}+\operatorname{Diff(0)}_{E_0^\prime})<0$, the curve $E_0$ is forced to contain two $A_1$ points, which implies that $\mathcal{X}_t$ contains a singularity in the basket $\mathcal{D}$. This again contradicts condition (3).
\end{proof}

Interestingly, under the same assumptions as in Theorem \ref{thm:1-comp}, we can further show that every singularity appearing in a general fiber $\mathcal{X}_t$ specializes to a singularity of the same isomorphism type in the central fiber $\mathcal{X}_0$.
\begin{corollary}\label{cor:const-sing}
Let $\pi\colon \mathcal{X}\rightarrow \mathbb{D}$ be a Fano projective morphism where $\mathcal{X}_0$ is reduced in $\mathcal{X}$ and
$(\mathcal{X},\mathcal{X}_0)$ plt. Assume that $\mathcal{X}_t$ satisfies conditions (1) to (3) of Theorem \ref{thm:1-comp} and $\mathcal{X}_t$ contains no $A_n$ singularities. Then, the singularity basket of $\mathcal{X}_0$ agrees with that of $\mathcal{X}_t$, up to possibly adding some Wahl singularities.
\end{corollary}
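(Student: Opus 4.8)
The plan is to deduce the corollary by combining the $1$-complement provided by Theorem~\ref{thm:1-comp} with the analysis of limits of singularities carried out in Lemma~\ref{lem:sing-central-fiber}. First I would apply Theorem~\ref{thm:1-comp} to obtain a $1$-complement $\mathcal{B}$ of $(\mathcal{X},\mathcal{X}_0)$ over $\mathbb{D}$, so that $(\mathcal{X},\mathcal{X}_0+\mathcal{B})$ is an index-one log Calabi--Yau pair over $\mathbb{D}$ with $(\mathcal{X},\mathcal{X}_0)$ plt. Restricting by adjunction --- using, as in \textit{Step~5} of the proof of Theorem~\ref{thm:1-comp}, that each $\mathcal{X}_t$ is a principal divisor and $\operatorname{Diff}_{\mathcal{X}_0}(0)\sim 0$ --- yields index-one log Calabi--Yau pairs $(\mathcal{X}_t,\mathcal{B}_t)$ and $(\mathcal{X}_0,\mathcal{B}_0)$. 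By Lemma~\ref{lem:sing-central-fiber}, the singularity basket of $\mathcal{X}_0$ consists of the limits of the singularities of $\mathcal{X}_t$ together with possibly some Wahl singularities, and each limit has the same Dynkin type as its generic counterpart; so the assertion to be proved is the stronger statement that each such limit is \emph{isomorphic} to the corresponding singularity of $\mathcal{X}_t$.

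To obtain this, recall the partial resolution $p\colon\mathcal{Y}\to\mathcal{X}$ of Lemma~\ref{extraction} used in the proof of Lemma~\ref{lem:sing-central-fiber}, which restricts for $t\neq 0$ to the minimal resolution of $\mathcal{X}_t$ and whose central fiber $\mathcal{Y}_0$ has at most Wahl singularities; each exceptional prime $E$ restricts to an irreducible curve $E_0$ occurring in $\mathcal{Y}_0$ with some multiplicity $k_E\geq 1$, and over a point $P\in\operatorname{Sing}(\mathcal{X}_t)$ with minimal resolution chain $[b_1,\dots,b_r]$ the minimal resolution of the limit point $P_0\in\mathcal{X}_0$ is obtained from that chain by inserting the exceptional chains of the Wahl singularities lying on the curves $E_{i,0}$. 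Thus $P_0\cong P$ precisely when all multiplicities $k_{E_i}$ equal one, and it suffices to exclude a Wahl stratum $\tfrac1{n^2}(1,na-1)$ with $n>1$ on a resolution chain. Here I would carry the complement along: pulling $\mathcal{B}$ back to $\mathcal{Y}$ gives an index-one log Calabi--Yau pair $(\mathcal{Y},\mathcal{B}_{\mathcal{Y}}+\mathcal{Y}_0)$ over $\mathbb{D}$, and since $P$ is not an $A_n$ singularity it is not Gorenstein canonical, so by the classification of two-dimensional log canonical pairs \cite{Ale92} the boundary $\mathcal{B}_t$ has two analytic branches at $P$ forming the toric boundary of the singularity; consequently $(\mathcal{X}_t,\mathcal{B}_t)$ is formally toric at $P$, every $E_i$ is a log canonical place, and the dual complex $\mathcal{D}(\mathcal{X}_t,\mathcal{B}_t)$ contains a linear segment of length $r+2$ running along the chain over $P$ and its two flanking boundary components.

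The contradiction then comes from comparing with the central fiber. If a Wahl stratum $\tfrac1{n^2}(1,na-1)$, $n>1$, sat on the chain over $P$, two consecutive limit curves would carry multiplicity $n$, and inserting the corresponding exceptional chain would make the segment of $\mathcal{D}(\mathcal{X}_0,\mathcal{B}_0)$ over $P_0$ strictly longer than $r+2$; this contradicts the invariance (up to PL-homotopy, and here also of the precise local combinatorics) of the dual complex of the index-one log Calabi--Yau pair along the family, which one normalizes by passing to a finite base change via Lemma~\ref{lem:finite-cover}. Equivalently, one may argue numerically: the limit curve would satisfy $E_{i,0}^2=-b_i/n^2$, and this is incompatible --- given that $E_{i,0}$ passes through at most two Wahl points by Lemma~\ref{lem:min-resol-horizontal}, that $\mathcal{X}_t$ carries no singularity in $\mathcal{F}_1,\dots,\mathcal{F}_4,\mathcal{D}$ and no $A_n$ point, and that $\operatorname{mld}(\mathcal{X}_t)<\tfrac16$ --- with the nef-threshold bookkeeping of \textit{Steps~3--4} of the proof of Theorem~\ref{thm:1-comp}. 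Once every singular point of $\mathcal{X}_t$ specializes to an isomorphic point of $\mathcal{X}_0$, Lemma~\ref{lem:sing-central-fiber} gives that the remaining singular points of $\mathcal{X}_0$ are Wahl, which is the claim. I expect the main obstacle to be exactly this last step: certifying that the dual-complex invariance together with the numerical bounds really do leave no room for a multiplicity $k_{E_i}>1$, i.e.\ for a Wahl stratum to appear inside a resolution chain under the standing hypotheses.
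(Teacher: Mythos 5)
Your setup is right---Theorem~\ref{thm:1-comp} supplies the $1$-complement, and the real content is upgrading ``same Dynkin type'' from Lemma~\ref{lem:sing-central-fiber} to ``isomorphic singularity''---but the step you yourself flag as the main obstacle is a genuine gap, and as written it fails. Only the PL-homotopy (simple-homotopy) class of the dual complex of a log Calabi--Yau pair is invariant under the operations available here, not its ``precise local combinatorics'': for an index-one log Calabi--Yau surface pair the dual complex is a circle, and the number of vertices on that circle can and does jump under degeneration (e.g.\ $(\mathbb{P}^2,\text{nodal cubic})\rightsquigarrow(\mathbb{P}(1,1,4),\text{toric boundary})$ takes one vertex to three). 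So ``the segment over $P_0$ would be strictly longer than $r+2$'' contradicts nothing that is actually preserved along the family, and Lemma~\ref{lem:finite-cover} only equalizes the dual complexes of the \emph{general} fibers with the generic fiber, never with the central one. The numerical fallback (that $E_{i,0}^2=-b_i/n^2$ is ``incompatible with the nef-threshold bookkeeping'') is not carried out and is not obviously true; as it stands neither route closes the argument.

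The paper closes it differently, by promoting toricness from the fiber to the total space rather than comparing fibers combinatorially. You already observe that at a singular point $P$ of $\mathcal{X}_t$ which is neither canonical Gorenstein nor of type $D$ the boundary $\mathcal{B}_t$ must have two analytically Cartier branches forming the local toric boundary; the paper applies the same observation on the central fiber as well and concludes that, near the singular curve $C$ of $\mathcal{X}$ dominating $\mathbb{D}$, the threefold pair $(\mathcal{X},\mathcal{B}+\mathcal{X}_0)$ has three analytically $\qq$-Cartier boundary components ($\mathcal{B}_1$, $\mathcal{B}_2$, $\mathcal{X}_0$), hence complexity zero, hence is formally toric at $C_0$ by~\cite[Theorem 1.1]{MS21}. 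Since $\mathcal{X}_0$ is Cartier there, the germ is analytically a product $\mathcal{X}_0\times\mathbb{D}$, which gives the isomorphism of the singularity of $\mathcal{X}_0$ with that of $\mathcal{X}_t$ directly---no multiplicity, minimal-resolution, or dual-complex bookkeeping is needed. The remaining singular points of $\mathcal{X}_0$, where $\mathcal{X}$ is smooth off the central fiber, admit $\qq$-Gorenstein smoothings and are therefore Wahl, which completes the statement.
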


\begin{proof}
By Theorem~\ref{thm:1-comp}, we know that the family admits a $1$-complement 
$(\mathcal{X},\mathcal{B})$ over $\mathbb{D}$.
Let $C$ be a singular curve of $\mathcal{X}$ and $C_0$ be the intersection of $C$ with $\mathcal{X}_0$. 
We argue that, up to finitely many points of the base, the fibers $\mathcal{X}_t$ have the same singularity as $\mathcal{X}_0$. 
As $C_0 \in \mathcal{X}_0$ is a singular point which is not a canonical Gorenstein singularity, we conclude that $\mathcal{B}_0$ must pass through $C_0$. 
Furthermore, $C_0$ is not a $D$ type singularity and so $\mathcal{B}_0$ must have two branches $\mathcal{B}_{0,1}$ and $\mathcal{B}_{0,2}$ near $C_0$. 
Note that both $\mathcal{B}_{0,1}$ and $\mathcal{B}_{0,2}$ are analytically Cartier divisors near $C_0$. Let $\mathcal{B}_t$ be the restriction of $\mathcal{B}$ to $\mathcal{X}_t$ arbitrary. Again, we know that $C_t$ is neither a canonical Gorenstein singularity 
nor a $D$ type singularity, hence $\mathcal{B}_t$ has two analytic Cartier branches $\mathcal{B}_{t,1}$ and $\mathcal{B}_{t,2}$. 
We conclude that $\mathcal{B}$ has two analytically $\qq$-Cartier branches 
$\mathcal{B}_1$ and $\mathcal{B}_2$ near $C_0\in \mathcal{X}_0$ (see, e.g.,~\cite[Lemma 31.18.9]{Stack}).
Thus, locally analytically near $C_0$ the pair $(\mathcal{X},\mathcal{B}+\mathcal{X}_0)$ has three $\qq$-Cartier components, namely $\mathcal{B}_1,\mathcal{B}_2$ and $\mathcal{X}_0$.
Hence, the point $C_0$ is a toric threefold singularity of $\mathcal{X}_0$,
indeed, is a point of complexity zero (see, e.g.,~\cite[Theorem 1.1]{MS21}).
As $\mathcal{X}_0$ is Cartier, we conclude that $\mathcal{X}_0$ is isomorphic
to a product $\mathcal{X}_0\times \mathbb{D}$ locally analytically near $C_0$. 

Now, let $C_0\in \mathcal{X}$ be a closed point at which $\mathcal{X}_0$ is singular. By assumption, no singular curve $C$ of $\mathcal{X}$ is contained in $\mathcal{X}_0$. Thus, either there exists a singular curve $C$ on $\mathcal{X}$ that dominates $\mathbb{D}$ and its intersection with $\mathcal{X}_0$ is $C_0$
or $\mathcal{X}$ is smooth in a punctured disk near $C_0$. 
In the first case, by the previous paragraph, we conclude the singularity at 
$C_0\in \mathcal{X}_0$ is isomorphic to a singularity of $\mathcal{X}_t$.
In the second case, the singularity $C_0\in \mathcal{X}_0$ admits a $\mathbb{Q}$-Gorenstein smoothing, and so it is a Wahl singularity.
This finishes the proof of the corollary.
\end{proof}

\begin{corollary}
Let $\pi\colon \mathcal{X}\rightarrow \mathbb{D}$ be a Fano projective morphism with $(\mathcal{X},\mathcal{X}_0)$ plt and $\mathcal{X}_0$ reduced. Assume that the general fiber $\mathcal{X}_t$ is isomorphic to a weighted projective plane $\pp(a,b,c)$, where $(a,b,c)$ is well formed. Then, the pair $(\mathcal{X},\mathcal{X}_0)$ admits a 1-complement, except possibly in the situation $\operatorname{mld}(\mathcal{X}_t)\geq \frac{1}{6}$ for $t\neq 0$, and in the following cases (up to a permutation of the weights): Let $n\geq 2$,
\begin{enumerate}
\item $\mathcal{X}_t=\mathbb{P}(1+4l(n-1),2n-1+4k(n-1),4n-4)$, where $0\leq l,k<n-1$.
\item $\mathcal{X}_t=\mathbb{P}(1+l(6n-5),3n-1+k(6n-5),6n-5)$, where $0\leq l,k<\lceil\frac{4(6n-5)}{9}\rceil$.
\item $\mathcal{X}_t=\mathbb{P}(1+l(6n-7),3n-2+k(6n-7),6n-7)$, where $0\leq l,k<\lceil\frac{4(6n-7)}{9}\rceil$.
\end{enumerate}
\label{weightedlist}
\end{corollary}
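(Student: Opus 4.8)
This is an application of Theorem~\ref{thm:1-comp}: we check its three hypotheses for $\mathcal{X}_t=\mathbb{P}(a,b,c)$ and pin down exactly when they fail. For well-formed $(a,b,c)$ the weighted projective plane $\mathbb{P}(a,b,c)$ is a $\mathbb{Q}$-factorial projective toric surface of Picard rank one, so hypothesis~(1) of Theorem~\ref{thm:1-comp} always holds. Hypothesis~(2) is $\mld(\mathcal{X}_t)<\tfrac16$, whose failure is the first exclusion in the statement. Hence the entire content is to show that, apart from the three families listed, a well-formed $\mathbb{P}(a,b,c)$ with $\mld(\mathbb{P}(a,b,c))<\tfrac16$ satisfies hypothesis~(3), i.e.\ has no singular point whose minimal resolution chain lies in $\mathcal{F}_1\cup\mathcal{F}_2\cup\mathcal{F}_3\cup\mathcal{F}_4\cup\mathcal{D}$; Theorem~\ref{thm:1-comp} then produces the $1$-complement.

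I would first record the toric data. The singular points of $\mathbb{P}(a,b,c)$ are the three torus-fixed points, where well-formedness makes the germs the cyclic quotients $\tfrac1a(1,q_a),\tfrac1b(1,q_b),\tfrac1c(1,q_c)$ with $q_c\equiv a^{-1}b\pmod c$ and cyclically, and the minimal resolution of each is the Hirzebruch--Jung chain of $\tfrac{c}{q_c}$ (read either way). Thus ``a fixed point of $\mathbb{P}(a,b,c)$ carries a prescribed chain $\mathfrak{c}$'' is equivalent to: one weight equals the determinant of $\mathfrak{c}$, and the other two reduce, modulo that weight, to the residue pair prescribed by $\mathfrak{c}$. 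A short computation with these cyclic-quotient cones — of the same nature as the ones used implicitly in Lemma~\ref{extraction} and Lemma~\ref{lem:sing-central-fiber} — shows that every chain in $\mathcal{F}_1\cup\cdots\cup\mathcal{F}_4$ has $\mld>\tfrac15$, while a chain $[2,m,2]\in\mathcal{D}$ has $\mld=\tfrac1{m-1}$. Under the running assumption $\mld(\mathbb{P}(a,b,c))<\tfrac16$, the fixed point realizing the mld is therefore never of type $\mathcal{F}_i$, and if it is of type $\mathcal{D}$ it must be $[2,m,2]=\tfrac1{4(m-1)}(1,2m-1)$ with $m\ge 8$.

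Now split according to whether the forbidden singularity is the one computing the mld. If it is the $\mathcal{D}$-singularity $[2,m,2]$, then one weight equals $4(m-1)$ and the residues of the other two modulo $4(m-1)$ are determined; after setting $n:=m$ and keeping only the well-formed, non-redundant representatives this is exactly family~(1) with $0\le l,k<n-1$, the bound being precisely the range in which such a triple genuinely has $\mld(\mathbb{P}(a,b,c))<\tfrac16$ (larger $l,k$ push the triple into $\mld\ge\tfrac16$ or onto a permuted copy already counted). If instead the forbidden singularity is milder, it has $\mld\ge\tfrac16$, hence lies in $\mathcal{F}_1\cup\cdots\cup\mathcal{F}_4$ or is $[2,m,2]$ with $m\le7$; then $\mld(\mathbb{P}(a,b,c))<\tfrac16$ is forced by a long Hirzebruch--Jung chain at another fixed point, and a case-by-case run through the baskets — discarding with Lemma~\ref{lem:rigid-sing} the chains that cannot coexist with such a configuration — shows that the only infinite families surviving are~(2) (a fixed point of weight $6n-5$ with residue $3n-1$) and~(3) (weight $6n-7$ with residue $3n-2$), everything else being finite and absorbed into $\mld\ge\tfrac16$ or into~(1)--(3) after a permutation of the weights.

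The main obstacle is this last, combinatorial, step. One has to (i) translate ``a fixed point carries a given Hirzebruch--Jung chain'' into congruences on $(a,b,c)$, being careful that a cyclic quotient determines its chain only up to reversal and that the weights may be permuted; (ii) couple these congruences with the inequality $\mld(\mathbb{P}(a,b,c))<\tfrac16$, which is itself a continued-fraction condition on ratios of the weights; and (iii) verify that the resulting parameter ranges — in particular the bounds $0\le l,k<n-1$ and $0\le l,k<\lceil 4(6n-5)/9\rceil,\ \lceil 4(6n-7)/9\rceil$ — are sharp, so that no spurious family is produced and none is missed. Granting this classification, the corollary follows at once from Theorem~\ref{thm:1-comp}.
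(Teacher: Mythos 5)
Your proposal rests on a misreading of the logical relationship between the corollary and Theorem~\ref{thm:1-comp}. You treat the corollary as a direct application of the theorem, so that the whole content becomes: classify the well-formed triples with $\operatorname{mld}<\tfrac16$ for which hypothesis~(3) fails (i.e.\ some fixed point lies in $\mathcal{F}_1\cup\cdots\cup\mathcal{F}_4\cup\mathcal{D}$), and show this classification collapses to families (1)--(3). That classification claim is false. For example, $\mathbb{P}(3,4,49)$ is well formed, its point of index $3$ is $\tfrac13(1,1)=[3]\in\mathcal{F}_1$, its point of index $49$ is $\tfrac1{49}(1,34)$ with $\operatorname{mld}\le\tfrac{3}{49}+\tfrac{4}{49}=\tfrac17<\tfrac16$, and $(3,4,49)$ belongs to none of the three families up to permutation. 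So the set where hypothesis~(3) fails is far larger than the stated exceptional set, and your strategy can at best prove a corollary with a much bigger list of exceptions. Conversely, the generic members of families (2) and (3) carry a $[2,n,3]$ (resp.\ $[2,n,2,2]$) point, which for large $n$ is \emph{not} in any of the baskets, so these exceptions are invisible to a hypothesis-(3) analysis; your framework cannot even produce them.

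What the paper actually does is re-enter the proof of Theorem~\ref{thm:1-comp} and isolate the genuine obstruction: the construction of the $1$-complement only breaks when the orbifold curve $(E_0',\operatorname{Diff}(0)_{E_0'})$ fails to admit a $1$-complement, which happens exactly when $E_0'$ carries three singular points whose orbifold indices form one of the triples $(m,2,2)$, $(5,2,3)$, $(4,2,3)$, $(3,2,2)$. Since the two points $Q_0,Q_1$ on $E_0$ come from the chains adjacent to the extracted curve in the minimal resolution of the mld-computing singularity $Q$, this forces $Q$ to be of type $[2,n,2]$, $[2,n,3]$, or $[2,n,2,2]$, and these three shapes are precisely what produce families (1), (2), (3). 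The bounds $0\le l,k<n-1$ (resp.\ $\lceil 4(6n-5)/9\rceil$, $\lceil 4(6n-7)/9\rceil$) are then obtained by a fractional-part/pigeonhole estimate on $\operatorname{mld}(P_1)$ showing that for larger $l,k$ the mld is no longer attained at the special point; you only gesture at this step and at the basket combinatorics, which you yourself identify as ``the main obstacle'' without carrying it out. In short: the approach is structurally wrong (the corollary is a refinement of the theorem's proof, not an application of its statement), the central classification claim has explicit counterexamples, and the quantitative bounds are not established.
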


\begin{proof}
The case $\rm{mld}(\mathcal{X}_t)\geq \frac{1}{6}$ correspond to finitely many cases by \cite[Proposition 5]{BB92}. Let us assume $\rm{mld}(\mathcal{X}_t)<\frac{1}{6}$ for $t\neq 0$. In the setting of the previous theorem, the pair $(E_0^\prime,\operatorname{Diff}(0)_{E_0^\prime})$ fails to admit a 1-complement if and only if $E_0^\prime$ contains three singular points with orbifold indices $(m,n_0,n_1)$ belonging to one of the following  triples: $(m,2,2)$ for $m\geq 2$, $(5,2,3)$, $(4,2,3)$, $(3,2,2)$. Here, $m$ denotes the index of the singularity obtained by contracting $C$. Consequently, by Lemma \ref{lem:rigid-sing} the singularity $Q$ that realizes $\operatorname{mld}(\mathcal{X}_t)$, must be either of type $D$, corresponding to a triple $(m,2,2)$, or else of the form $[2,n,3]$, $[2,n,2,2]$ with $n\geq 2$ for the other cases. We prove that cases $(1)$, $(2)$, $(3)$ correspond respectively to the only possible weighted projective planes that satisfy this property. 
We now establish the bound for $k$ in (1); The other cases follow analogously. Following \cite[Section 3]{Am06}, in a weighted projective plane $\pp(a_0,a_1,a_2)$ with $(a_0,a_1,a_2)$ well-formed, the minimal log discrepancy of a singular point $P_i=\frac{1}{a_i}(a_j,a_k)$ is given by the formula $$\operatorname{mld}(P_i)= \min_{1\leq t\leq a_i-1}\left\{\tfrac{t a_j}{a_i}\right\}+ \left\{\tfrac{t a_k}{a_i}\right\},$$ where $\{\frac{ta_s}{a_1}\}$ denotes the fractional part of $\frac{ta_s}{a_1}$. Let $a_0=1+4l(n-1),2n-1+4k(n-1)$, $a_1=2n-1+4k(n-1)$ and $a_2=4(n-1)$. It is straightforward to check that $\mld(P_2)=\frac{1}{n-1}$. To compute $\operatorname{mld}(P_1)$ write the set of indices $I=\{1,\dots,a_1-1\}$ in the form $I=\{ta_2^{-1} \pmod{a_1}:t\in I\}$ where $a_2^{-1}$ denotes the inverse of $a_2$ modulo $a_1$. Hence, $\operatorname{mld}(P_1)=\operatorname{min}_{1\leq t\leq a_1-1}\{\frac{t\alpha}{a_1}\}+\frac{t}{a_1},$ where $\alpha=a_2^{-1}a_0$.
For a positive integer $T<a_1$, we observe that the values $\{\frac{t\alpha}{a_i}\}$ with $0\leq t\leq T$ are all distinct and contained in $[0,1)$. It follows that there exist some $t<t^\prime$ such that $\{\frac{t\alpha}{a_i}\}$ and $\{\frac{t^\prime\alpha}{a_i}\}$ lie in an interval of length $\frac{1}{T}$. Therefore, if $t_0=t^\prime-t$, then $\{\frac{t_0\alpha}{a_1}\}<\frac{1}{T}$. Choosing $T=2(n-1)$ yields that $$\left\{\frac{t_0\alpha}{a_1}\right\}+\frac{t_0}{a_1}<\frac{1}{2(n-1)}+\frac{2(n-1)}{a_1}.$$ If $k\geq n-1$, then $a_1> 4(n-1)^2$ and therefore $\operatorname{mld}(P_1)<\frac{1}{n-1}$. It follows that the singularity attaining $\operatorname{mld}(\mathcal{X}_t)$ is not $P_2$. The bounds of the other singularities follow from the value of the mld of the singularity $P_2$, keeping the the same numbering as in (1). Indeed, $\operatorname{mld}(P_2)=\frac{5}{6n-5}$ in (2) and $\operatorname{mld}(P_2)=\frac{5}{6n-7}$ in (3).
\end{proof}
Now we proceed to show that the set of weighted projective planes containing a Du Val point and those presented in Corollary \ref{weightedlist} have density zero in $\mathbb{Z}_{>0}^3$. Here, we use the natural density, see for example \cite[Chapter III]{Tenenbaum15}.

\begin{lemma}
Let $S \subset \mathbb{Z}_{>0}^3$ be the union of the families (up to permutation):
\begin{enumerate}
\item the set \(\mathcal{A}\) of all well-formed triples \((a,b,c)\) for which the weighted projective plane \(\mathbb{P}(a,b,c)\) contains either Du Val singularities or a torus invariant smooth point.
\item $\mathcal{B}_1 = \{ \bigl(1+4\ell(n-1),2n-1+4k(n-1),4n-4\bigr)\mid n\geq 2,\, 0 \le \ell,k < n-1\}$,
\item  $\mathcal{B}_2 = \{ \bigl(1+\ell(6n-5),\,3n-1+k(6n-5),\,6n-5\bigr) \mid  n \geq 2,\, 0 \leq \ell,k < \lceil\tfrac{4(6n-5)}{9}\rceil \}$,
\item $\mathcal{B}_3 = \{ \bigl(1+\ell(6n-7),\,3n-2+k(6n-7),\,6n-7\bigr) \mid n \geq 2,\, 0 \leq \ell,k < \lceil\tfrac{4(6n-7)}{9}\rceil \}$.
\end{enumerate}
Then \(S\) has density zero in \(\mathbb{Z}_{>0}^3\); equivalently,
$$\lim_{N \to \infty} \frac{\#(S \cap [1,N]^3)}{N^3} = 0.$$
\label{densitylem}
\end{lemma}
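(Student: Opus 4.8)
The plan is to bound $\#(S\cap[1,N]^3)$ from above by $o(N^3)$, treating the four pieces $\mathcal A,\mathcal B_1,\mathcal B_2,\mathcal B_3$ separately. Since a finite union of density-zero sets has density zero, and since replacing a set by the union of all coordinate-permutations of its elements multiplies the counting function $\#(\,\cdot\,\cap[1,N]^3)$ by at most $6$, it suffices to bound each piece while ignoring permutations.

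For $\mathcal A$ I would first reduce the geometric condition to an arithmetic one. A well-formed triple $(a_0,a_1,a_2)$ has pairwise coprime entries, so $\pp(a_0,a_1,a_2)$ is smooth away from the three torus-fixed points $P_0,P_1,P_2$, and near $P_i$ it is analytically the cyclic quotient singularity $\frac1{a_i}(a_j,a_k)$ with $\{i,j,k\}=\{0,1,2\}$. Now $\frac1m(p,q)$ (with $p,q$ prime to $m$) is smooth or Du Val exactly when $m\mid p+q$: one has $\frac1m(p,q)\cong\frac1m(1,q')$ with $q'\equiv p^{-1}q\pmod m$, and $\frac1m(1,q')$ is of type $A_{m-1}$ precisely when $q'\equiv m-1$, i.e. $q\equiv-p\pmod m$ (with $m=1$ the smooth case). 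Therefore
\[
\mathcal A\subseteq\bigcup_{i=0}^{2}\bigl\{(a_0,a_1,a_2)\in\zz_{>0}^3 : a_i\mid a_j+a_k\bigr\},
\]
and by symmetry it is enough to show that $T:=\{(a,b,c)\in\zz_{>0}^3 : c\mid a+b\}$ has density zero. For this I would fix $c$ and $a$ in $[1,N]$: at most $N/c+1$ integers $b\in[1,N]$ satisfy $b\equiv-a\pmod c$, hence
\[
\#\bigl(T\cap[1,N]^3\bigr)\le\sum_{c=1}^{N}N\Bigl(\tfrac Nc+1\Bigr)=N^2\sum_{c\le N}\tfrac1c+N^2=O\bigl(N^2\log N\bigr)=o(N^3).
\]

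For the three sporadic families I would show directly that $\#(\mathcal B_j\cap[1,N]^3)=O(N^{3/2})$. The point is that in $\mathcal B_j$ the last coordinate $c$ equals $4(n-1)$, $6n-5$, $6n-7$ for $j=1,2,3$, which is strictly increasing in $n$ and hence determines $n$; then the first coordinate determines $\ell$ and the second determines $k$. In each case the admissible ranges of $\ell$ and of $k$ have size $O(c)$, whereas the requirement $1+\ell c\le N$ (resp. the analogous bound coming from the second coordinate) forces $\ell,k\le N/c$. So for every admissible $c\le N$ the family contributes at most $\bigl(\min(O(c),N/c)+1\bigr)^2$ triples to $[1,N]^3$; splitting the sum over $c$ at $c\asymp\sqrt N$ gives $\sum_{c\le\sqrt N}O(c^2)+\sum_{c>\sqrt N}O(N^2/c^2)=O(N^{3/2})$, as claimed. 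Combining the estimates yields $\#(S\cap[1,N]^3)=O(N^2\log N)=o(N^3)$.

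I expect the only non-formal step to be the arithmetic reformulation of $\mathcal A$: one has to use well-formedness to pin down the local analytic model of each torus-fixed point of $\pp(a_0,a_1,a_2)$ and invoke the classical characterization of Du Val (and smooth) cyclic quotient surface singularities. Everything after that is elementary, relying only on $\sum_{c\le N}1/c=O(\log N)$ and $\sum_{c>\sqrt N}1/c^2=O(1/\sqrt N)$, so no genuine obstacle is anticipated.
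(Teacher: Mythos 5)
Your proposal is correct and follows essentially the same route as the paper: for $\mathcal{A}$ you reduce to the divisibility condition $a_i \mid a_j + a_k$ and obtain the $O(N^2\log N)$ bound via the harmonic sum, and for each $\mathcal{B}_j$ you get $O(N^{3/2})$ by noting that the last weight determines $n$ and splitting the sum over $c\asymp n$ at $\sqrt{N}$, exactly as the paper does. The only cosmetic difference is that you spell out the classification of Du Val cyclic quotient singularities (that $\tfrac1m(p,q)$ is smooth or Du Val iff $m\mid p+q$), which the paper simply asserts.
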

\begin{proof}
For the family \textit{(1)}, we observe that the singular point $P=\frac{1}{a}(b,c)$ in $\mathbb{P}(a,b,c)$ is an $A_{a-1}$ singularity if and only if $b+c\equiv 0 \pmod{a}$. This also includes the possible non-singular torus invariant points. Thus, For a fixed $a$, this gives $$\#\{(b,c)\in [1,N]^2:b+c\equiv 0\pmod{a}\}\leq \sum_{r=0}^{a-1}\Big\lceil\frac{N}{a}\Big\rceil^2\leq a\Big(\frac{N}{a}+1\Big)^2.$$ Consequently, $$\sum_{a\leq N}\#\{(b,c)\in [1,N]^2:b+c\equiv 0\pmod{a}\}\leq N^2\sum_{a\leq N}\frac{1}{a}+O(N^2)\ll N^2\operatorname{log}(N)+O(N^2).$$ Summing similarly over all choices of $b,c$, we obtain, $\lim_{N \to \infty} \frac{\#(\mathcal{A} \cap [1,N]^3)}{N^3} = \lim_{N \to \infty}\frac{3N^2\operatorname{log}(N)+O(N^2))}{N^3}=0$. 

For the family \textit{(2)}, a triple $(a,b,c) \in \mathcal{B}_1$ satisfies $c = 4n-4 \le N$, so $n \le \lfloor N/4 \rfloor + 1$. For each such $n$, the admissible values of $\ell$ and $k$ satisfy $0 \le \ell < n-1$, $0 \le k < n-1$, and the constraints $a = 1 + 4l(n-1)\leq N$, $b =2n-1+4k(n-1)\leq N$.  

Hence, for a fixed $n$, the number of admissible triples is $$T(n) := \#\{ (\ell,k) : 0 \leq \ell,k < n-1,\, a \leq N,\, b \leq N \}\leq \frac{N^2}{16 (n-1)^2}.$$
We now sum over all $2 \leq n \leq \lfloor N/4 \rfloor + 1$. We make this by splitting the sum at $n_0 =\lfloor \sqrt{N} \rfloor$:  

\begin{enumerate}
\item For $2 \leq n \leq n_0$, it follows that $T(n)\leq n^2 \le N$. Hence $$\sum_{n=2}^{n_0} T(n) \le \sum_{n=1}^{\sqrt{N}} n^2 = \frac{\sqrt{N}(\sqrt{N}+1)(2\sqrt{N}+1)}{6} \le N^{3/2}.$$

\item For $n_0 < n \le \lfloor N/4 \rfloor + 1$, we use the previous bound $T(n) \le \frac{N^2}{16 (n-1)^2} \le \frac{N^2}{16 n^2}$. Then $$\sum_{n=n_0+1}^{\lfloor N/4 \rfloor + 1} T(n) \le \frac{N^2}{16} \sum_{n > \sqrt{N}} \frac{1}{n^2} \le \frac{N^2}{16} \cdot \frac{1}{\sqrt{N}-1} < N^{3/2}.$$
\end{enumerate}
Combining both ranges gives $\#(\mathcal{B}_1 \cap [1,N]^3)< 6 N^{3/2}$. Thus, we deduce that the density of $\mathcal{B}_1$ is given by $\lim_{N \to \infty}\frac{\#(\mathcal{B}_1 \cap [1,N]^3)}{N^3} =\lim_{N \to \infty} \frac{6 N^{3/2}}{N^3}=0$. An analogous statement holds for the families \textit{(2)} and \textit{(3)}. Consequently, the set $S$ has density $0$ as well.
\end{proof}

\begin{proof}[Proof of Theorem~\ref{weighteddeg}]
Let $\mathcal{X}\rightarrow \mathbb{D}$ be a klt degeneration 
of $\mathcal{X}_t \simeq \pp(a,b,c)$ where $(a,b,c)\in \zz_{\geq 1}^3$ is a well-formed triple. Here, we mean that the pair 
$(\mathcal{X},\mathcal{X}_0)$ is a plt pair. 
By Corollary~\ref{weightedlist} and Lemma~\ref{densitylem} 
there exists a subset $S\subsetneq \zz_{\geq 1}^3$ 
satisfying the following conditions:
\begin{enumerate}
\item The subset $S$ has density zero in the natural density endowed from $\zz^3$; and 
\item for every $(a,b,c)\in \zz_{\geq 1}^3 \setminus S$ the klt degeneration $\mathcal{X}\rightarrow \mathbb{D}$ admits a $1$-complement $(\mathcal{X},\mathcal{B}+\mathcal{X}_0)$. 
\end{enumerate} 
By Lemma~\ref{densitylem}, (1) every torus invariant point 
of $\mathcal{X}_t$ has minimal log discrepancy $<1$. 
Therefore, for $t\in \mathbb{D}$ general, 
the boundary divisor $\mathcal{B}_t$ must pass through every torus invariant point of $\mathcal{X}_t$ and be nodal at such points. 
This implies that $\mathcal{B}_t$ is just the toric boundary
of $\mathbb{P}(a,b,c)$.
We argue that $\mathcal{X}_0$ is a toric surface of Picard rank one
with the same singularities than $\mathbb{P}(a,b,c)$. 
The fact that $\mathcal{X}_0$ has Picard rank one follows from \cite[Lemma 2]{Man91}. Further, from Corollary \ref{cor:const-sing}, we know that $\mathcal{X}_0$ has the same singularities as $\mathbb{P}(a,b,c)$ and possibly some extra Wahl singularities. 
If $\mathcal{X}_0$ has some extra Wahl singularities, then the 
$1$-complement $\mathcal{B}_0$ must pass through at least $4$ singular points, and it must be nodal at such points. 
This would imply that $\mathcal{B}_0$ has at least $4$ components. 
Thus, we would get $c(\mathcal{X}_0,\mathcal{B}_0)<0$ leading to a contradiction. 
Hence, we conclude that $\mathcal{X}_0$ has the same three singular points than $\mathbb{P}(a,b,c)$ and that $(\mathcal{X}_0,\mathcal{B}_0)$ is a toric pair.

We argue that $\mathcal{B}$ has trivial monodromy over $\mathbb{D}$, i.e., every component of $\mathcal{B}_t$ is indeed the restriction
of an irreducible component of $\mathcal{B}$. 
Assume otherwise and take the finite cover given by Lemma~\ref{lem:finite-cover}, so we obtain a commutative diagram as follows: 
\[
\xymatrix{
(\mathcal{X},\mathcal{X}_0+\mathcal{B}) \ar[d]_-{\pi} & 
(\mathcal{X}',\mathcal{X}'_0+\mathcal{B}') \ar[d]^-{\pi'} \ar[l]_-{f} \\
\mathbb{D} & \mathbb{D} \ar[l]_-{f_\mathbb{D}}
}
\]
where $f$ is a crepant finite Galois morphism.
This induces a crepant finite Galois morphism between central fibers 
\[
f_0\colon  
(\mathcal{X}_0',\mathcal{B}_0') \rightarrow (\mathcal{X}_0,\mathcal{B}_0).
\]
As $(\mathcal{X}_0,\mathcal{B}_0)$ is a toric pair, 
and $f_0$ is unramified on the complement of $\mathcal{B}_0$, 
we conclude that $f_0$ is a toric cover (see, e.g.,~\cite[Proposition 3.32]{Mor20c}.
Thus, the Galois group acts trivially on 
$\mathcal{D}(\mathcal{B}_0')$. 
This contradicts the fact that $\mathcal{B}$ has monodromy over $\mathbb{D}$. 
Therefore, we conclude that every component of $\mathcal{B}_t$ is the restriction of a component of $\mathcal{B}$.
Thus, we get 
\[
c_0(\mathcal{X}/\mathbb{D},\mathcal{X}_0+\mathcal{B})=
\dim \mathcal{X}  +\rho(\mathcal{X}) - |\mathcal{X}_0+\mathcal{B}| =0.
\]
By~\cite[Theorem 1.1]{MS21}, we conclude that 
$(\mathcal{X},\mathcal{X}_0+\mathcal{B})\rightarrow \mathbb{D}$ is a formally toric morphism near $\{0\}$. 
Hence, this morphism is simply a product if and only if the central fiber is reduced. 
This finishes the proof. 
\end{proof} 

\begin{proof}[Proof of Corollary~\ref{cor:markov-no-deg}]
Note that the condition $(a,b,c)\in \zz_{\geq 2}$
implies that $\pp(a^2,b^2,c^2)$ has three singular points
with orbifold indices $a^2,b^2,c^2$, respectively.
The mld of such points are $\frac{1}{a}$, $\frac{1}{b}$, 
and $\frac{1}{c}$, respectively. 
Thus, ${\rm mld}(\pp(a^2,b^2,c^2)) < \frac{1}{6}$ 
by the classification of Markov triples. 
None of the sets $\mathcal{B}_1,\mathcal{B}_2, \mathcal{B}_3$
in Lemma~\ref{densitylem} contain Markov triples.
Hence, the statement follows from Theorem~\ref{weighteddeg}. 
\end{proof}

\section{Degenerations of weighted projective planes.}
\label{sec:degen-wps}

In this section, we focus on the proof of Theorem~\ref{thm:1-1-n}. Along the lines of Theorem \ref{thm:1-comp}, we first show that for any klt Fano degeneration $\pi:\mathcal{X}\to\mathbb{D}$ with general fiber $\mathcal{X}_t\cong \pp(1,1,n)$, the central fiber $\mathcal{X}_0$ admits a 1-complement for all $n\geq 2$. We then classify all possible limits $\mathcal{X}_0$ when $\mathcal{X}_0$ is a toric surface. As a consequence, we obtain a full classification of the degenerations of $\pp(1,1,n)$ with $n\geq 3$ and reduced central fiber $\mathcal{X}_0$. \newline
Throughout this section, we denote by $\mathbb{F}_n$ the Hirzebruch surface whose negative section $\Delta_0$ satisfies $\Delta_0^2=-n$.
By Lemma \ref{lem:min-resol-horizontal}, any degeneration of $\mathbb{P}(1,1,n)$ with $n\geq 2$ can be described in terms of a degeneration of $\mathbb{F}_n$, after extracting a divisor $E$ over the family. Following \cite[Theorem 1]{B85}, if $X$ is a normal projective surface with klt singularities that admits a $\mathbb{Q}$-Gorenstein smoothing to some $\mathbb{F}_n$, then $X$ must be a rational surface. Moreover, if $\phi:Y\to X$ denotes its minimal resolution, there exists a maximum integer $d>0$ such that $Y$ is obtained by a sequence of blow-ups away from the section $\Delta_0\subset\mathbb{F}_d$. The number $d$ is defined to be the {\em weight} of $X$, see \cite[Definition 2]{Man91}. The corresponding birational morphism $\mu:Y\to \mathbb{F}_d$ induces a genus zero fibration $Y\to\pp^1$. \newline
The following lemma was previously proved by
the second author in \cite[Theorem 5.6]{MNU24}. We rewrite the statement and include a proof here for the reader's convenience.
\begin{lemma}\label{lem:Hz-minres}
Let $\pi:\mathcal{X}\to\mathbb{D}$ be a $\mathbb{Q}$-Gorenstein degeneration of $\mathbb{F}_n$. Assume that $X=\mathcal{X}_0$ is a singular normal projective surface with klt singularities. Let $\phi:Y\to X$ be the minimal resolution of $\mathcal{X}_0$, and let $\mu:Y\to \mathbb{F}_d$ be the associated minimal model of $Y$ where $d$ is the weight of $X$. Then, $\mu(Exc(\phi))$ consists of the section $\Delta_0$ with at most two fibers $F_1,F_2$.
\end{lemma}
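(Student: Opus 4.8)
My plan is to recover the geometry of $Y$ from the ruling of $\mathbb{F}_n$ carried through the degeneration, and then to read off the exceptional curves of $\phi$ against the resulting fibration. I would begin with the Picard‑number bookkeeping. Since $\mathcal{X}\to\mathbb{D}$ is a $\qq$‑Gorenstein degeneration of $\mathbb{F}_n$ with reduced klt central fiber $X=\mathcal{X}_0$, the surface $X$ has only T‑singularities, so by the exact sequence~\eqref{shortexact} the group $\Cl(X)$ has rank $\rho(\mathbb{F}_n)=2$; as klt surface singularities are $\qq$‑factorial, this gives $\rho(X)=2$. Writing $\phi$ as the contraction of $r$ prime curves and $\mu$ as a composition of $k$ point blow‑ups, I obtain $2+r=\rho(Y)=2+k$, hence $r=k$. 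This identity, which forbids the exceptional configuration of $\phi$ from being larger than that of $\mu$, will be used repeatedly.

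Next I would construct a conic bundle on $X$. The fiber class $F$ of the ruling $\mathbb{F}_n\to\pp^1$ lies in $\Pic(\mathcal{X}_t)$ for $t\neq0$; under the specialization map, which preserves intersection numbers (as in the proof of Lemma~\ref{lem:min-resol-horizontal}), it goes to a nef class $\bar F\in\Cl(X)$ with $\bar F^2=0$ and, using $K_{\mathcal{X}}|_{\mathcal{X}_t}\sim K_{\mathcal{X}_t}$ from~\cite[Lemma 2.1]{EV85}, with $\bar F\cdot K_X=-2$. Riemann--Roch then forces $h^0(X,m\bar F)$ to grow linearly in $m$, so $\bar F$ has Iitaka dimension one, and a nef class of vanishing self‑intersection on a surface is semiample; thus $\bar F$ defines a fibration $h\colon X\to\pp^1$ with smooth rational general fiber. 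Dually, the negative section of $\mathbb{F}_n$ specializes to an effective curve $D$ with $D^2=-n$ and $D\cdot\bar F=1$, which must span the extremal ray of $\overline{\operatorname{NE}}(X)$ complementary to the one contracted by $h$; hence $D$ is irreducible and is a section of $h$. Transporting to $Y$ one gets the genus zero fibration $g:=h\circ\phi\colon Y\to\pp^1$, and the next---more delicate---point is to match $g$ with the fibration induced by $\mu$, i.e.\ to show that the weight $d$ is realized by $g$. For this I would contract the fiberwise $(-1)$‑curves of $g$ disjoint from the section $\phi^{-1}_*D$, producing a morphism $Y\to\mathbb{F}_e$ that blows down no point of that section, so $e=-(\phi^{-1}_*D)^2\ge n$; the maximality of the weight, together with the fact that $\Delta_0$ is the only irreducible curve of negative self‑intersection on a Hirzebruch surface, then pins $\mu$ down and identifies the section $\Delta_0$ with the image of $\phi^{-1}_*D$, and in particular every $\mu$‑exceptional curve lies in a fiber of $g$.

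With $g$ understood, I would locate $\operatorname{Exc}(\phi)$. A prime $\phi$‑exceptional curve $E\cong\pp^1$ has $E^2\le-2$; if $E$ dominates $\pp^1$, then $\mu(E)$ is an irreducible multisection of $\mathbb{F}_d$, and since the only irreducible curve of negative self‑intersection on $\mathbb{F}_d$ is $\Delta_0$ and the identity $r=k$ excludes heavily blown‑up (multi)sections being contracted, $\mu(E)$ must be $\Delta_0$; otherwise $E$ is contained in a fiber of $g$. Therefore $\mu(\operatorname{Exc}(\phi))$ is contained in $\Delta_0$ together with the fibers of $g$ meeting $\operatorname{Exc}(\phi)$, that is, the fibers of $h$ through a singular point of $X$. \textbf{The main obstacle is the bound of two on the number of such fibers.} To obtain it I would contract $D$ to a Picard‑rank‑one klt surface $X_1$, a limit of $\pp(1,1,n)$, and examine the pencil on $X_1$ obtained by pushing forward $|\bar F|$: since $X_1$ has Picard rank one and the blow‑up producing $X$ sits over a single point $x_1\in X_1$, only the two invariant ``axes'' through $x_1$ can have non‑general strict transform in $X$ (the general member of the pencil is a smooth fiber meeting $\operatorname{Sing}(X)\setminus\{x_1\}$ in nothing), so at most two fibers of $h$ are relevant, and these are the $F_1,F_2$ of the statement; combining the two steps gives $\mu(\operatorname{Exc}(\phi))\subseteq\Delta_0\cup F_1\cup F_2$. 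I expect the identification $g=h\circ\phi$ and the count of two fibers to be the genuinely nontrivial parts, the former because one must exclude the weight being realized by an unrelated ruling and the latter because it is a global statement; the remaining steps are intersection‑theoretic bookkeeping of the kind already carried out in Lemmas~\ref{lem:min-resol-horizontal}--\ref{lem:sing-central-fiber}.
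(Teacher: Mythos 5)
Your proposal establishes the easy half of the lemma (every $\phi$-exceptional curve maps into $\Delta_0$ or into a fiber of the ruling), but the crucial quantitative claim --- that at most \emph{two} fibers are involved --- is asserted rather than proved, and the argument you sketch for it does not work. You contract the section $D$ to a point $x_1$ of a Picard-rank-one surface $X_1$ and claim that only ``the two invariant axes through $x_1$'' can have non-general strict transform. But $X_1$ is merely some klt rank-one degeneration of $\pp(1,1,n)$; it has no reason to be toric, and \emph{every} member of the pencil passing through a singular point of $X_1$ away from $x_1$ yields a degenerate fiber of $h$ that can contribute to $\operatorname{Exc}(\phi)$. A priori $X$ could carry three Wahl points off $D$ lying on three distinct fibers; excluding this is precisely the content of the lemma, and your reduction to $X_1$ only relocates the problem (bounding the singular locus of a rank-one degeneration) without solving it. The paper's proof of the bound is a combinatorial count absent from your sketch: it combines the identity $\rho(Y/X)=\rho(Y/\mathbb{F}_d)$ with the facts that all singularities of $X$ are Wahl (so $\operatorname{Exc}(\phi)$ is a disjoint union of chains whose self-intersections satisfy $\sum e_i = 3\ell+1$), that the number $e$ of $(-1)$-curves on $Y$ satisfies $h\le e\le h+1$ where $h$ is the number of degenerate fibers (the upper bound coming from \cite[Proposition 9]{Man91} via $h^0(-K_Y)\ge 9$), and the structure of degenerate fibers from \cite[Proposition 7.4]{HP10}; the chain structure of the component of $\operatorname{Exc}(\phi)$ containing $\Delta_0$ then forces $h\le 3$, with equality only when the extra fiber is a single blow-up at a generic point and hence contributes nothing to $\operatorname{Exc}(\phi)$.

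Two further gaps. First, you deduce $\rho(X)=2$ from the sequence~\eqref{shortexact}, but that sequence is derived under the hypothesis that all singularities of $X$ are Wahl; if $X$ acquires a non-Wahl T-singularity the second Betti number drops and $\rho(X)=1$. This case genuinely occurs, and the paper disposes of it separately by citing \cite[Theorem 7.6]{HP10}. Second, the identification of your ruling $g=h\circ\phi$ with the ruling defining the weight $d$ --- which you yourself flag as the delicate point --- is never carried out: you must exclude that the maximal weight is realized by an unrelated genus-zero fibration on $Y$, and ``maximality of the weight'' alone does not do this.
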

\begin{proof}
Assume that $\rho(X)=2$, the case $\rho(X)=1$ follows from \cite[Theorem 7.6]{HP10}. By Lemma \ref{lem:sing-central-fiber} we observe that $X$ contains only Wahl singularities. 
Let $E=\operatorname{Exc}(\phi)$ denote the exceptional divisor of $\phi$, and let $h$ be the number of degenerated fibers of $\mu$.
First, note that $E$ cannot be fully contained in $\operatorname{Supp}(\sum_{i=1}^h f_i)$. Otherwise, $E$ would contain every component in $f_i$ except for a unique $(-1)$ curve. Indeed, by \cite[Proposition 7.4]{HP10}, the curve $\phi(f_i)$ would then meet two singularities, one of which is not Wahl, a contradiction.
We now proceed to bound $h$. Let $e$ denote the number of $(-1)$ curves in $Y$, and abusing notation, let $\Delta_0$ be the strict transform of $\Delta_0\subset \mathbb{F}_d$ under $\mu$. It follows that $e \leq h+1$. Otherwise, we would obtain  $b_2(\overline{E \setminus\Delta_0})+e>\sum_{i=1}^h b_2(f_i),$ which implies that $E$ contains an exceptional curve different from $\Delta_0$ and not contained in any degenerated fiber. However, this contradicts \cite[Proposition 9]{Man91}, since $h^0(-K_Y)=h^0(-K_X)\geq h^0(-K_{\mathbb{F}_n})+1=9$. 
Hence, we are left with two possibilities for the degenerate fibers:
\begin{enumerate}
\item $e=h$ : All degenerated fibers contain exactly one $(-1)$ curve.
\item $e=h+1$ : There exists a unique degenerated fiber containing two $(-1)$ curves, say $f_1$, while each remaining $f_i$ contains exactly only one $(-1)$ curve.
\end{enumerate}
In \emph{Case (1)}, the equality $b_2(\overline{E \setminus\Delta_0})+e+1=\sum_{i=1}^h b_2(f_i)$ implies there exists an irreducible curve $C$ in a degenerate fiber $f_j$, such that $C\not\subset\operatorname{Supp}(E)$ and $C^2\leq -2$. We relabel this fiber $f_1$. Let $E_1$ denote the $(-1)$ curve of $f_1$. Note that neither $E_1$ nor $C$ intersect $\Delta_0$. The claim for $E_1$ follows immediately; otherwise $f_1$ must contain an additional $(-1)$ curve. If $C$ were to intersect $\Delta_0$, then by \cite[Proposition 7.4]{HP10}, the fiber $f_1$ would have a curve configuration of the form $(-e_1)-\cdots-(-e_k)-(-1)-(-b_1)-\cdots-(-b_l)-(-c)$, where $C^2=-c$ and $e_i,b_i\geq 2$ for all $i$. Since $X$ carries only Wahl singularities, it follows that $\sum_{i=1}^k e_i=3k+1$ and $\sum_{i=1}^l b_i=3l+1$, which together imply that $c=1$, a contradiction. On the other hand, we note that $\overline{f_1\setminus{(E_1+C)}}$ has at most three connected components. \newline
In \emph{Case (2)}, the only situation where a $(-1)$ curve in $f_1$ intersects $\Delta_0$ is when $f_1$ consists of a single blowup on a generic point of $\mathbb{F}_d$. Otherwise, $f_1$ would have a configuration of curves $(-1)-(-2)-\cdots-(-2)-(-1)$, implying that $X$ contains an $A_k$ singularity, a contradiction. Analogously to the previous case, the complement $\overline{f_1\setminus{(E_1+E_2)}}$ has at most three connected components. \newline
Since the connected component that contains $\Delta_0$ is a chain of curves, we obtain that $h \leq 3$ and only attaining equality when $f_1$ is obtained by a single blow-up on a generic point of $\mathbb{F}_d$. Finally, let $E_i^{\prime}$ denote the $(-1)$ curves in the remaining fibers. In each case, $\overline{f_i\setminus E_i^{\prime}}$ contains at most 2 connected components. We conclude that $\pi(E)$ consists of $\Delta_0$ and at most two fibers $f_1,f_2$.
\end{proof}

\begin{proposition}\label{prop:1-comp 1-1-n}
Let $\mathcal{X} \to \mathbb{D}$ be a klt Fano degeneration of $\mathbb{P}(1,1,n)$ with $n\geq 2$. Then, the pair $(\mathcal{X},\mathcal{X}_0)$ admits a $1$-complement over $\mathbb{D}$.
\end{proposition}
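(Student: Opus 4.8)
The plan is to run the five-step argument of the proof of Theorem~\ref{thm:1-comp}, using that a $\mathbb{Q}$-Gorenstein degeneration of a smooth surface carries only Wahl singularities, together with the structural Lemma~\ref{lem:Hz-minres}, to replace the hypothesis $\operatorname{mld}(\mathcal{X}_t)<\frac{1}{6}$ --- which $\pp(1,1,n)$ fails for $n\le 12$. The general fiber $\pp(1,1,n)$ has a unique singular point $\frac{1}{n}(1,1)$, and the divisor $E_t$ realizing its minimal resolution $\mathbb{F}_n\to\pp(1,1,n)$ has $a_{E_t}(\mathcal{X}_t)=2/n\le 1$. By Lemma~\ref{extraction} there is a $\mathbb{Q}$-Gorenstein family $\mathcal{Y}\to\mathbb{D}$ with $(\mathcal{Y},\mathcal{Y}_0)$ plt and a divisorial contraction $(\mathcal{E}\subset\mathcal{Y})\to\mathcal{X}$ over $\mathbb{D}$ with $\mathcal{E}|_{\mathcal{Y}_t}=E_t$, so $\mathcal{Y}\to\mathbb{D}$ is a $\mathbb{Q}$-Gorenstein smoothing of $\mathcal{Y}_0$ with general fiber $\mathbb{F}_n$. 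By Lemma~\ref{lem:sing-central-fiber} and Lemma~\ref{lem:min-resol-horizontal}, $\mathcal{Y}_0$ carries only Wahl singularities and $E_0:=\mathcal{E}|_{\mathcal{Y}_0}$ is an irreducible smooth rational curve that is a toric coordinate axis at each of the at most two singular points of $\mathcal{Y}_0$ it meets, near which $(\mathcal{Y}_0,E_0)$ is plt. A $1$-complement of $(\mathcal{Y}_0,E_0)$ pushes forward to one of $\mathcal{X}_0$, which lifts to a $1$-complement of $(\mathcal{X},\mathcal{X}_0)$ over $\mathbb{D}$ exactly as in \emph{Step~5} of the proof of Theorem~\ref{thm:1-comp} (via \cite[Lemma~2.1]{EV85}, inversion of adjunction, and \cite[Proposition~3.7]{PS09}); so it suffices to complement $(\mathcal{Y}_0,E_0)$.

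\textbf{The extremal contraction and the case analysis.} We repeat \emph{Steps~2 and 3} of the proof of Theorem~\ref{thm:1-comp} for $(\mathcal{Y}_0,E_0)$. Let $d$ be the anti-nef threshold. If $d=1$ then $-(K_{\mathcal{Y}_0}+E_0)$ is nef and \emph{Step~5} of Theorem~\ref{thm:1-comp} produces the complement. If $d<1$, a $K_{\mathcal{Y}_0}$-negative extremal curve $C$ with $(K_{\mathcal{Y}_0}+dE_0)\cdot C=0$ is contracted by a \emph{birational} morphism $\varphi\colon\mathcal{Y}_0\to\mathcal{Y}_0'$ (the semicontinuity-of-fiber-dimension argument is unchanged, $\mathcal{Y}_t=\mathbb{F}_n$ being toric), and $(\mathcal{Y}_0',E_0')$ at $P=\varphi(C)$, with $E_0'=\varphi_*E_0$, falls into Case~3.1 (a $D$-singularity at $P$), Case~3.2 ($E_0'$ nodal at $P$, so $(\mathcal{Y}_0',E_0')$ is index-one log Calabi--Yau and we are done), or Case~3.3 ($(\mathcal{Y}_0',E_0')$ plt at $P$). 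Case~3.1 cannot occur: as in Theorem~\ref{thm:1-comp} it would produce a non-Wahl $\mathcal{F}_3$-singularity or a $D$-singularity on $\mathcal{Y}_0$, impossible since $\mathcal{Y}_0$ has only Wahl singularities. In Case~3.3, with $P$ of orbifold index $m$, $\operatorname{Diff}(0)_{E_0'}$ is supported on $P$ together with the (at most two) images of the Wahl singularities on $E_0$; by the analysis in the proof of Corollary~\ref{weightedlist} it admits a $1$-complement unless $E_0'$ carries three singular points whose orbifold indices form $(m,2,2)$, $(3,2,2)$, $(4,2,3)$, or $(5,2,3)$. This is exactly where the mld bound is replaced: since $\mathcal{Y}_0$ has only Wahl singularities, each of its singular points on $E_0$ contributes a coefficient $1-\frac{1}{r^2}$ with $r\ge 2$, i.e.\ an orbifold index $r^2\ge 4$, so at most one of the three indices on $E_0'$, namely $m$, can be $\le 3$, and none of the obstructing triples (each having at least two indices $\le 3$) can occur. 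Lemma~\ref{lem:Hz-minres} is then used to rule out the remaining degenerate configurations and to ensure $(K_{\mathcal{Y}_0'}+E_0')\cdot E_0'<0$: the minimal resolution $\phi\colon Z\to\mathcal{Y}_0$ and the contraction $\mu\colon Z\to\mathbb{F}_{d_0}$ to the weight-$d_0$ Hirzebruch surface satisfy that $\mu(\operatorname{Exc}(\phi))$ is $\Delta_0$ together with at most two fibers, which pins down the chains of rational curves over $P$ and over the Wahl points (one also uses Lemma~\ref{lem:rigid-sing}). Hence $(E_0',\operatorname{Diff}(0)_{E_0'})$ has a $1$-complement; since $\rho(\mathcal{Y}_0')=1$ and $(K_{\mathcal{Y}_0'}+E_0')\cdot E_0'<0$, the class $-(K_{\mathcal{Y}_0'}+E_0')$ is ample, Kawamata--Viehweg vanishing lifts the complement to $D\in|-(K_{\mathcal{Y}_0'}+E_0')|$, inversion of adjunction makes $(\mathcal{Y}_0',E_0'+D)$ log canonical, and pushing $\varphi^*D+E_0+C$ forward to $\mathcal{X}_0$ and lifting over $\mathbb{D}$ finishes the proof.

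\textbf{Main obstacle.} The crux is the last part of the previous step: checking, for \emph{every} $n\ge 2$, that the Wahl-only and Hirzebruch structure of $\mathcal{Y}_0$ (Lemmas~\ref{lem:sing-central-fiber}, \ref{lem:min-resol-horizontal}, \ref{lem:Hz-minres}, \ref{lem:rigid-sing}) genuinely excludes the obstructing configurations on $E_0'$ and forces $(K_{\mathcal{Y}_0'}+E_0')\cdot E_0'<0$. This plays the role of the clean estimate $\operatorname{mld}(\mathcal{X}_t)<\frac{1}{6}$ in Theorem~\ref{thm:1-comp}, and here it instead requires a careful case analysis of the chains of rational curves in the minimal resolution.
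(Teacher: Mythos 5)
Your proposal is correct in substance and follows the same overall strategy as the paper: extract the $(-n)$-curve to pass to a $\qq$-Gorenstein smoothing of $\mathbb{F}_n$, replay Steps 2--5 of Theorem~\ref{thm:1-comp}, and replace the hypothesis $\operatorname{mld}(\mathcal{X}_t)<\tfrac{1}{6}$ by the structural information coming from Lemmas~\ref{lem:sing-central-fiber}, \ref{lem:min-resol-horizontal} and \ref{lem:Hz-minres}. The one genuine divergence is in how the obstructing configurations on $E_0'$ are excluded. The paper's route is purely geometric: by Lemma~\ref{lem:Hz-minres} (made explicit in Proposition~\ref{prop:min-res-1-1-n}), the extremal curve $C$ is the unique $(-1)$-curve of the degenerate fiber $f_1$ and meets $E_0$ transversally in a single point, so Case~3.2 is vacuous, $(\mathcal{Y}_0',E_0')$ is plt, and at most one singular point of $\mathcal{Y}_0$ lies on $E_0$ away from $E_0\cap C$; hence $E_0'$ carries at most two singular points and the $1$-complement $\{P\}+\{Q_1\}$ of the different exists with no further analysis. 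Your route is instead numerical: you keep Case~3.2 alive (harmlessly) and, in Case~3.3, rule out the platonic triples of Corollary~\ref{weightedlist} by observing that Wahl points contribute orbifold index $r^2\geq 4$ to $\operatorname{Diff}(0)_{E_0'}$. This is a valid and rather clean alternative, but note that it is conditional on the degree inequality $(K_{\mathcal{Y}_0'}+E_0')\cdot E_0'<0$, which you correctly flag as the remaining obstacle; since $(K_{\mathcal{Y}_0}+E_0)\cdot E_0=(K_{\mathbb{F}_n}+E_t)\cdot E_t=-2$ is preserved by specialization and $\varphi^*(K_{\mathcal{Y}_0'}+E_0')=K_{\mathcal{Y}_0}+E_0+aC$ with $a<1$, this again reduces to $C\cdot E_0\leq 2$, i.e.\ to exactly the fiber-chain description $(-n)-(-1)-(-2)-\cdots-(-2)$ that the paper extracts from Lemma~\ref{lem:Hz-minres}. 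In short: same skeleton, with your platonic-triple count substituting for the paper's direct two-point count, and the deferred verification you identify is precisely what Lemma~\ref{lem:Hz-minres} supplies.
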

\begin{proof}
We proceed as in the proof of Theorem \ref{thm:1-comp}. Following \emph{Step 1}, let $(\mathcal{E}\subset \mathcal{Y})\to\mathbb{D}$ be the family obtained by extracting the $(-n)$ curve over the point $\frac{1}{n}(1,1)$. As in \emph{Step 2}, let $C$ be the extremal curve $C$ such that $(K_{\mathcal{Y}_0}+dE_0)\cdot C=0$, where $d>0$ is the anti-nef threshold of $(\mathcal{Y}_0,dE_0)$. By Lemma \ref{lem:Hz-minres}, in the minimal resolution $Y\to\mathcal{Y}_0$, the strict transform of $C$ corresponds to the unique $(-1)$ curve contained in the degenerated fiber $f_1$ of the associated fibration $Y\to\pp^1$.
As in \emph{Step 3}, let $\varphi:\mathcal{Y}_0\to\mathcal{Y}_0^\prime$ be the contraction induced by $C$. By Lemma \ref{lem:Hz-minres}, $C$ and $E_0$ intersect in a single point, therefore after contracting $C$ under $\varphi$, the pair  $(\mathcal{Y}_0^\prime,E_0^\prime)$ is plt.
In this situation, there can be at most one singular point of $\mathcal{Y}_0$ lying on $E_0$ outside $E_0\cap C$. Consequently, $E_0^\prime$ contains at most two singular points of $\mathcal{Y}_0^\prime$. Finally, following \emph{Step 5}, we obtain a 1-complement of the pair $(\mathcal{X},\mathcal{X}_0)$ over $\mathbb{D}$.
\end{proof}
Combining the previous proposition with Corollary \ref{cor:const-sing}, we deduce that for $n\geq 3$, the central fiber in any klt Fano degeneration of $\pp(1,1,n)$ must contain a point of the form $\frac{1}{n}(1,1)$. 
\begin{proposition}\label{prop:sing-limits_1-1-n}
Let $\mathcal{X} \to \mathbb{D}$ be a klt Fano degeneration of $\mathbb{P}(1,1,n)$ with $n\geq 3$. Then the cyclic quotient singularity $P=\frac{1}{n}(1,1)$ on the general fiber specializes to a cyclic quotient singularity of the same type on the central fiber. Moreover, the singularity basket of $\mathcal{X}_0$ consists of a point $\frac{1}{n}(1,1)$ and at most two Wahl singularities different from $P$.
\end{proposition}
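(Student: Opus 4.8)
The plan is to combine the $1$-complement of Proposition~\ref{prop:1-comp 1-1-n} with the local analysis in the proof of Corollary~\ref{cor:const-sing}, and then to bound the Wahl singularities of $\mathcal{X}_0$ by a complexity estimate. Let $(\mathcal{X},\mathcal{X}_0+\mathcal{B})$ be the $1$-complement over $\mathbb{D}$ and $\mathcal{B}_0$ its (reduced) restriction to $\mathcal{X}_0$, so that $(\mathcal{X}_0,\mathcal{B}_0)$ is an index-one log Calabi--Yau surface pair. Since $n\geq 3$, the unique singular point $P=\tfrac{1}{n}(1,1)$ of $\mathcal{X}_t\cong\pp(1,1,n)$ has minimal resolution a single $(-n)$-curve, hence is neither a canonical Gorenstein singularity nor a $D$-type cyclic quotient singularity.

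First I would record that, $(\mathcal{X},\mathcal{X}_0)$ being plt, no singular curve of $\mathcal{X}$ lies in $\mathcal{X}_0$, and that the closure $C\subset\mathcal{X}$ of the singular section $t\mapsto P$ over $\mathbb{D}^{*}$ is the unique singular curve of $\mathcal{X}$ dominating $\mathbb{D}$ and meets $\mathcal{X}_0$ in a single point $C_0$, which is singular on $\mathcal{X}_0$; by the specialization of intersection products (as in Lemmas~\ref{lem:min-resol-horizontal} and~\ref{lem:sing-central-fiber}), the minimal resolution of $C_0$ is a single negative curve, so in particular $C_0$ is not $D$-type. Granting for a moment that $C_0$ is not canonical Gorenstein, I would then rerun the argument of Corollary~\ref{cor:const-sing} verbatim: using only the existence of the $1$-complement and that $C_0$ is neither canonical Gorenstein nor $D$-type, it gives that $\mathcal{B}_0$ has two analytic, analytically $\qq$-Cartier branches at $C_0$, that $(\mathcal{X},\mathcal{B}+\mathcal{X}_0)$ has complexity zero and is formally toric near $C_0$, and --- $\mathcal{X}_0$ being Cartier --- that $\mathcal{X}$ is, locally analytically near $C_0$, a product $(\text{surface germ})\times\mathbb{D}$, whence $C_0\cong P=\tfrac{1}{n}(1,1)$. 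The same dichotomy shows that every singular point of $\mathcal{X}_0$ other than $C_0$ meets no singular curve of $\mathcal{X}$, hence admits a $\qq$-Gorenstein smoothing and is a Wahl singularity; so the basket of $\mathcal{X}_0$ consists of $\tfrac{1}{n}(1,1)$ together with finitely many Wahl singularities distinct from $C_0$.

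To bound those, I would use $\rho(\mathcal{X}_0)=\rho(\mathcal{X}_t)=1$ (by~\cite[Lemma 2]{Man91}, as in the proof of Theorem~\ref{weighteddeg}), so that the relative complexity bound $0\leq c(\mathcal{X}_0,\mathcal{B}_0)=\dim\mathcal{X}_0+\rho(\mathcal{X}_0)-|\mathcal{B}_0|$ forces $|\mathcal{B}_0|\leq 3$. Each singular point of $\mathcal{X}_0$ is $C_0\cong\tfrac{1}{n}(1,1)$ or a genuine Wahl singularity $\tfrac{1}{m^2}(1,ma-1)$ of Gorenstein index $m\geq 2$; none of these is canonical Gorenstein, and none is $D$-type (by Remark~\ref{Wahldeg}), so --- arguing as in Corollary~\ref{cor:const-sing} --- the curve $\mathcal{B}_0$ passes through each of them with at least two local branches. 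On the other hand $K_{\mathcal{X}_0}+\mathcal{B}_0\sim 0$ together with Serre duality gives $\chi(\mathcal{O}_{\mathcal{B}_0})=0$, so the total $\delta$-invariant of $\mathcal{B}_0$ equals $\delta(\mathcal{B}_0)=\chi(\mathcal{O}_{\mathcal{B}_0^{\nu}})-\chi(\mathcal{O}_{\mathcal{B}_0})=|\mathcal{B}_0|-\sum_i g(B_i^{\nu})\leq|\mathcal{B}_0|\leq 3$; since each point of $\mathcal{B}_0$ with two or more branches contributes at least $1$ to $\delta(\mathcal{B}_0)$, there are at most three such points, and they include every singular point of $\mathcal{X}_0$. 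Hence $\mathcal{X}_0$ has at most three singular points, namely $\tfrac{1}{n}(1,1)$ together with at most two Wahl singularities different from $P$.

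It remains to justify that $C_0$ is not a canonical Gorenstein singularity, which I expect --- together with the final combinatorial count above --- to be the only delicate point. Since its minimal resolution is a single negative curve, if $C_0$ were Du Val it would be an $A_1$ singularity, obtained by contracting a $(-2)$-curve; comparing with the $(-n)$-curve via the specialization of intersection products of Lemma~\ref{lem:min-resol-horizontal} (with multiplicity $k$), this forces $n=2k^{2}$ with $k\geq 2$ (the case $k=1$ is excluded as $n\geq 3$). But then the point of type $\tfrac{1}{n}(1,1)$ that $\mathcal{X}_0$ carries (by the remark preceding the proposition) is distinct from $C_0$, does not lie on the singular curve $C$, and is therefore a Wahl singularity by the second paragraph --- contradicting the fact that $\tfrac{1}{2k^{2}}(1,1)$ is never of Wahl type. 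This contradiction shows $C_0$ is not canonical Gorenstein, completing the argument. The main obstacle is really the numerical step in the third paragraph: translating ``$\mathcal{B}_0$ is nodal at every singular point of $\mathcal{X}_0$'' into the bound $\leq 3$, which is precisely where one needs $\rho(\mathcal{X}_0)=1$ and the identification $p_a(\mathcal{B}_0)=1$.
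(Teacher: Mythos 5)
Your overall route is the paper's: produce the $1$-complement via Proposition~\ref{prop:1-comp 1-1-n}, rerun the local analysis of Corollary~\ref{cor:const-sing} to identify $C_0$ with $\tfrac{1}{n}(1,1)$ and to see that all other singular points are Wahl, and then bound the number of Wahl points by forcing $\mathcal{B}_0$ to be nodal at every singular point and invoking $c(\mathcal{X}_0,\mathcal{B}_0)\geq 0$. Your $\delta$-invariant count in the third paragraph is a slightly more careful version of the paper's ``at least $4$ nodal points would give at least $4$ components'' argument, and it is correct. You are also right that one cannot simply quote Corollary~\ref{cor:const-sing} verbatim here (its hypotheses, e.g.\ ${\rm mld}<\tfrac16$ and the exclusion of the baskets $\mathcal{F}_i$, fail for small $n$), so isolating what the proof of that corollary actually needs is the right move.

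The genuine gap is in your last paragraph. To run the Corollary~\ref{cor:const-sing} argument at $C_0$ you must know that $C_0$ is not a canonical Gorenstein point, and your proof of this is circular: you invoke ``the point of type $\tfrac{1}{n}(1,1)$ that $\mathcal{X}_0$ carries (by the remark preceding the proposition)'', but that remark is exactly the first assertion of the proposition being proved, and in the paper it is itself deduced from the very non-canonicity statement you are trying to establish. If $C_0$ were Du Val there would be no independent reason for $\mathcal{X}_0$ to contain a $\tfrac{1}{n}(1,1)$ point at all, so the contradiction you derive presupposes the conclusion. (A secondary issue: ``if $C_0$ were Du Val it would be $A_1$'' is not justified either, since the extracted curve $E_0\subset\mathcal{Y}_0$ may pass through Wahl singularities of $\mathcal{Y}_0$, so the minimal resolution of $C_0\in\mathcal{X}_0$ need not be a single curve.) The fix is short and avoids both problems: writing $K_{\mathcal{Y}}=\pi^*K_{\mathcal{X}}+(a_E(\mathcal{X})-1)E$ with $a_E(\mathcal{X})=a_{E_t}(\mathcal{X}_t)=\tfrac{2}{n}$ and restricting to the central fiber (where $E|_{\mathcal{Y}_0}=kE_0$ with $k\geq 1$) gives $a_{E_0}(\mathcal{X}_0)=1+k\bigl(\tfrac{2}{n}-1\bigr)\leq \tfrac{2}{n}<1$ for $n\geq 3$, so $C_0$ is a non-canonical singular point of $\mathcal{X}_0$; in particular it is not canonical Gorenstein, and the rest of your argument goes through.
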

\begin{proof}
Since $\mathcal{X}_0$ is 1-complemented, Corollary \ref{cor:const-sing} implies that the singularity basket of $\mathcal{X}_0$ coincides with that of $\pp(1,1,n)$, possibly with additional Wahl singularities. As $n\geq 3$, a 1-complement $\mathcal{B}_0\in |-K_{\mathcal{X}_0}|$ intersects every singular point of $\mathcal{X}_0$ and is nodal at such points. Therefore, $c(\mathcal{X}_0,\mathcal{B}_0)\geq 0$ if and only if $\mathcal{X}_0$ contains at most two Wahl singularities different from $P$.
\end{proof}
We are in a position to classify all the possible limits of klt Fano degenerations of $\pp(1,1,n)$ where $\mathcal{X}_0$ is a toric surface. For this part, we make a similar proof to \cite[Theorem 4.1]{HP10}.

\begin{lemma}\label{lem:mutations} 
Let $n \geqslant 1$. Any solution of the equation $n+x^2+y^2=(n+2)xy$ over the positive integers is obtained from the mutation process $(x, y) \mapsto(y,(n+2) y-x)$ starting from $(1,1)$. In particular, it follows that $\operatorname{gcd}(n,x,y)=1$.
\end{lemma}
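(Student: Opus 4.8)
The plan is to run a Vieta--jumping (descent) argument, in the spirit of the classical analysis of the Markov equation. Write $\mathcal S$ for the set of positive integer solutions, and set $a_0=a_1=1$, $a_{k+1}=(n+2)a_k-a_{k-1}$ for $k\ge 1$; iterating $\mu(x,y)=(y,(n+2)y-x)$ from $(1,1)$ produces exactly the pairs $(a_k,a_{k+1})$, so the statement amounts to showing that, up to interchanging $x$ and $y$, every element of $\mathcal S$ is of this form. The easy inclusion is that each such pair solves the equation: if $(x,y)$ satisfies $n+x^2+y^2=(n+2)xy$, then rewriting this as the quadratic $x^2-(n+2)yx+(y^2+n)=0$ shows its two roots are $x$ and $x'':=(n+2)y-x$, with $x\,x''=y^2+n>0$; hence $x''$ is a positive integer and, by symmetry of the equation, $(y,x'')=\mu(x,y)$ is again a positive solution. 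Starting from the obvious solution $(1,1)$ this gives $(a_k,a_{k+1})\in\mathcal S$ for all $k$.

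For the reverse inclusion I would induct on $x+y$. By symmetry of the equation we may assume $x\le y$. If $x=y$, substitution gives $n=nx^2$, forcing $x=y=1$, the root $(a_0,a_1)$. So assume $x<y$ and set $y':=(n+2)x-y=\tfrac{x^2+n}{y}$, the Vieta conjugate of $y$ among the roots of $Z^2-(n+2)xZ+(x^2+n)=0$; then $y'$ is a positive integer, $(x,y')\in\mathcal S$, and $\mu(y',x)=(x,y)$. The point I would establish next is that $x<y$ forces $y>\tfrac{(n+2)x}{2}$; granting this, $y'=(n+2)x-y<\tfrac{(n+2)x}{2}<y$, so $x+y'<x+y$ and the inductive hypothesis applies to $(y',x)$. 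Since $(x,y)=\mu(y',x)$ and $\mu$ sends a chain pair (up to order) to a chain pair, $(x,y)$ is again of the form $(a_k,a_{k+1})$ up to interchange of coordinates, closing the induction.

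The only non-formal step -- and the main obstacle -- is the claim that no positive solution satisfies $x<y\le\tfrac{(n+2)x}{2}$; such a solution would be a ``stuck'' minimum of the descent distinct from $(1,1)$. I would rule it out by a direct estimate. The inequality $y\le\tfrac{(n+2)x}{2}$ means $y$ is at most the midpoint of the two roots above, hence $y^2\le y\,y'=x^2+n$; feeding this into the equation gives $(n+2)xy=n+x^2+y^2\le 2x^2+2n$. On the other hand $x<y$ gives $xy\ge x(x+1)$, whence $(n+2)x(x+1)\le 2x^2+2n$, i.e. $nx^2+(n+2)x\le 2n$. But for every integer $x\ge 1$ one has $nx^2+(n+2)x\ge n+(n+2)=2n+2>2n$, a contradiction. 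Thus the forbidden configuration cannot occur, the induction goes through, and $\mathcal S=\{(a_k,a_{k+1}):k\ge 0\}$ up to interchange of coordinates.

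Finally, I would deduce $\gcd(n,x,y)=1$ as a corollary. A common divisor $d$ of $n$, $y$ and $(n+2)y-x$ also divides $x=(n+2)y-\bigl((n+2)y-x\bigr)$, and conversely any common divisor of $n,x,y$ divides $(n+2)y-x$; hence $\mu$ (and likewise interchange of coordinates) preserves the quantity $\gcd(n,x,y)$. Since, by the above, every positive solution is obtained from $(1,1)$ by such operations and $\gcd(n,1,1)=1$, we conclude $\gcd(n,x,y)=1$ for every positive solution.
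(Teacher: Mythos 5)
Your proposal is correct and follows essentially the same Vieta-jumping descent as the paper: both verify that the conjugate root $(n+2)x-y=(x^2+n)/y$ is a positive integer solution, both reduce the descent to the inequality $y^2>x^2+n$ (your contradiction from $nx^2+(n+2)x\le 2n$ is exactly the paper's observation that $y\ge x+1$ forces $(n+2)xy>2(x^2+n)$, read contrapositively), and both note that mutation preserves $\gcd(n,x,y)$. Your write-up is somewhat more careful about organizing the descent as an induction on $x+y$ and about the gcd bookkeeping, but the mathematical content is identical.
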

\begin{proof}
Let $(x,y) \in\mathbb{Z}_{\geq 1}^2$ satisfy $n+x^2+y^2=(n+2) x y$. Then $(x,(n+2)x-y)$ is also a solution, and since $(n+2) x-y=\frac{x^2+n}{y}>0$, the mutation preserves positivity. The only solution with $x=y$ is $(1,1)$. Assume now $1 \leqslant x<y$. We have $y^2>x^2+n$. Since the inequality is equivalent to $(n+2) x y>2\left(x^2+n\right)$, the condition $y \geqslant x+1$ gives it automatically. The inequality $y^2>x^2+n$ gives $y>y^{\prime}=(n+2)x-y$. Thus, each mutation strictly decreases the larger coordinate while keeping all entries positive. Iterating the process eventually yields $(1,1)$, and every solution arises in this way from successive mutations of $(1,1)$. Inductively, we conclude that $\operatorname{gcd}(n, x, y)=1$ for every solution $(x,y)$ of the equation.
\end{proof}

\begin{proposition}\label{prop:tor-deg}
Let $\mathcal{X}\rightarrow \mathbb{D}$ be a klt Fano degeneration of $\pp(1,1,n)$ with $n\geq 3$, then $\mathcal{X}_0$ is toric if and only if $\mathcal{X}_0\cong \pp(x^2,y^2,n)$ such that $x^2+y^2+n=(n+2)xy$.
\end{proposition}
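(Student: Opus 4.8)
The plan is to prove both implications, following the strategy of \cite[Theorem~4.1]{HP10}. For the ``only if'' direction, suppose $\mathcal{X}_0$ is toric. By \cite[Lemma~2]{Man91} it has Picard rank one, hence $\mathcal{X}_0\cong\pp(a,b,c)$ for a well-formed triple $(a,b,c)$. By Proposition~\ref{prop:sing-limits_1-1-n} the singularity basket of $\mathcal{X}_0$ consists of a point of type $\tfrac1n(1,1)$ together with at most two Wahl singularities. Now a point $\tfrac1n(1,1)$ has local class group $\zz/n$, while a Wahl singularity $\tfrac1{m^2}(1,ma-1)$ has local class group $\zz/m^2$; since the torus fixed points of $\pp(a,b,c)$ carry local class groups $\zz/a,\zz/b,\zz/c$, after reordering we must have $c=n$ and $a=x^2$, $b=y^2$ for some $x,y\in\zz_{\geq 1}$. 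Finally, using that $K_{\mathcal{X}_s}^2$ is independent of $s\in\mathbb{D}$ for a $\qq$-Gorenstein family, I would compute
\[
\frac{(x^2+y^2+n)^2}{x^2 y^2 n}=K_{\mathcal{X}_0}^2=K_{\pp(1,1,n)}^2=\frac{(n+2)^2}{n},
\]
and taking positive square roots gives $x^2+y^2+n=(n+2)xy$.

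For the ``if'' direction, start from $(x,y)\in\zz_{\geq 1}^2$ with $x^2+y^2+n=(n+2)xy$. By Lemma~\ref{lem:mutations} we have $\gcd(n,x,y)=1$, and a short divisibility manipulation of the equation then upgrades this to pairwise coprimality of $x^2,y^2,n$ and shows that the weight-$n$ torus fixed point of $\pp(x^2,y^2,n)$ is of type $\tfrac1n(1,1)$ while the other two are Wahl singularities; in particular $\pp(x^2,y^2,n)$ is a well-formed klt Fano surface. Since $-K$ is big there are no local-to-global obstructions to $\qq$-Gorenstein deformation \cite[Proposition~3.1]{HP10}, so there is a one-parameter $\qq$-Gorenstein deformation $\mathcal{X}'\to\mathbb{D}$ with $\mathcal{X}'_0\cong\pp(x^2,y^2,n)$ that smooths the two Wahl points and leaves the rigid point $\tfrac1n(1,1)$ untouched (for $n=4$, where $\tfrac14(1,1)$ is not $\qq$-Gorenstein rigid, one simply chooses the deformation to be trivial there). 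Then $\mathcal{X}'_t$ is a klt Fano surface with a single quotient singularity of type $\tfrac1n(1,1)$; by~\eqref{shortexact} it has Picard rank one, so its minimal resolution is a smooth rational surface of Picard rank two whose unique negative curve is a $(-n)$-curve, forcing it to be $\mathbb{F}_n$ and hence $\mathcal{X}'_t\cong\pp(1,1,n)$. Thus $\pp(x^2,y^2,n)$ arises as the central fiber of a klt Fano degeneration of $\pp(1,1,n)$. An alternative is to realize each step of the mutation process of Lemma~\ref{lem:mutations} as an explicit $\qq$-Gorenstein degeneration and to compose these.

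The step I expect to be the main obstacle is the construction in the ``if'' direction: one must produce a $\qq$-Gorenstein deformation smoothing exactly the two Wahl points while preserving the point $\tfrac1n(1,1)$, and then control the invariants of the general fiber — Picard rank one, $K^2=(n+2)^2/n$, a single $\tfrac1n(1,1)$ — well enough to recognize it as $\pp(1,1,n)$ via its minimal model $\mathbb{F}_n$. The ``only if'' direction is comparatively soft, its only real inputs being the description of the singularity basket in Proposition~\ref{prop:sing-limits_1-1-n}, the deformation invariance of $K^2$, and the elementary remark that the weight at a Wahl torus fixed point must be a perfect square.
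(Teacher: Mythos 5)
Your ``if'' direction is essentially the paper's argument: unobstructedness of $\qq$-Gorenstein deformations via \cite[Proposition~3.1]{HP10}, smoothing the two Wahl points while keeping the $\tfrac1n(1,1)$ point, and recognizing the general fiber as $\pp(1,1,n)$ through $\rho=1$, $K^2=(n+2)^2/n$ and the minimal resolution $\mathbb{F}_n$. That part is fine.

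The ``only if'' direction has a genuine gap at its very first step. From ``$\mathcal{X}_0$ is toric of Picard rank one'' you conclude ``hence $\mathcal{X}_0\cong\pp(a,b,c)$,'' but a toric surface of Picard rank one is in general only a \emph{fake} weighted projective plane, i.e., a quotient $\pp(w_0,w_1,w_2)/G$ by a finite abelian group acting freely in codimension one (e.g.\ $\pp^2/(\zz/3)$ with three $A_2$ points). Everything downstream depends on this identification: if $\mathcal{X}_0=\pp(w_0,w_1,w_2)/\zz_e$ with $e>1$, then $K_{\mathcal{X}_0}^2=\tfrac{1}{e}K_{\pp(w_0,w_1,w_2)}^2$, so your $K^2$ computation would produce a different Diophantine condition, and the local class groups at the fixed points are no longer simply $\zz/w_i$, so the ``perfect square weight'' matching also breaks. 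This is exactly the point the paper spends most of the converse on: it passes to the covering $p\colon\pp(w_0,w_1,w_2)\to\mathcal{X}_0$ induced by the lattice inclusion $N_{\mathcal{X}_0}\subset N$ of index $e$, writes $ew_i=n_i^2$ and $ew_2=n$, derives $n_0^2+n_1^2+n=(n+2)n_0n_1$ from $K^2_{\pp(w)}=eK^2_{\mathcal{X}_0}$, and only then uses the $\gcd(n,x,y)=1$ clause of Lemma~\ref{lem:mutations} to force $e=\gcd(n,n_0^2,n_1^2)=1$ and hence $\mathcal{X}_0\cong\pp(n,n_0^2,n_1^2)$. You cite Lemma~\ref{lem:mutations} in the ``if'' direction but never use its $\gcd$ conclusion, which is precisely the ingredient needed to rule out a nontrivial quotient; without it the converse is incomplete.
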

\begin{proof}
Suppose that $X\cong\pp(n,x^2,y^2)$, where the weights satisfy the equality in the statement. The surface $X$ contains singular points
$$
P_0=\frac{1}{n}(1,1), P_1=\frac{1}{n_0^2}\left(1, n_0 w_{n_0}-1\right) \text { and } P_2=\frac{1}{n_1^2}\left(1, n_1 w_{n_1}-1\right)
$$
where $w_{n_0}\equiv(n+2)n_1^{-1}\pmod{n_0}$ and $w_{n_1}\equiv(n+2) n_0^{-1}\pmod{n_1}$. Since $X$ is klt and $-K_X$ is ample, by \cite[Proposition 3.1]{HP10} it follows that $H^2(X, T_X)=0$, hence $X$ has no local to global obstructions to deform. In particular, it admits a $\qq$-Gorenstein deformation to a del Pezzo surface $X^{\prime}$ with $\rho\left(X^{\prime}\right)=1, K_X^2=\frac{(n+2)^2}{n}$ and unique singular point $P=\frac{1}{n}(1,1)$. Its minimal resolution $Y \rightarrow X^{\prime}$ satisfies $K_Y^2=8$ and consequently $Y\cong\mathbb{F}_n$. Therefore, it must hold that $X^{\prime}\cong\mathbb{P}(1,1,n)$.

Conversely, let $\mathcal{X} \rightarrow \mathbb{D}$ be a klt Fano degeneration of $\mathbb{P}(1,1,n)$ with $n \geq 3$, and assume that $\mathcal{X}_0$ is toric. By Proposition \ref{prop:sing-limits_1-1-n}, the surface $\mathcal{X}_0$ contains a quotient singularity of type $\frac{1}{n}(1,1)$ and possibly some Wahl singularities with Gorenstein indices $n_0$ and $n_1$. Let $p:\pp(w_0,w_1,w_2)\to\mathcal{X}_0$ be the finite covering induced by the inclusion of lattices $N_{\mathcal{X}_0}\subset N=\mathbb{Z}^2$ of index $\left[N: N_{\mathcal{X}_0}\right]=e$. The singularities of $\mathcal{X}_0$ are then obtained as quotients of the singularities of $\pp(w_0, w_1, w_2)$. 
Without loss of generality, we may assume that $e w_i=n_i^2$ for $i=0,1$ and $e w_2=n$. Since $K_{\mathbb{P}\left(w_0, w_1, w_2\right)}^2=e K_{\mathcal{X}_0}^2$, we derive that $n_0^2+n_1^2+n=(n+2) n_0 n_1$. Since $e=\operatorname{gcd}\left(n,n_0^2, n_1^2\right)$, Lemma \ref{lem:mutations} implies that $e=1$. Therefore, $\mathcal{X}_0\cong\pp(n,n_0^2,n_1^2)$ where $n_0^2+n_1^2+n=(n+2)n_0n_1$.
\end{proof}

\begin{proposition}\label{prop:min-res-1-1-n}
Let $\mathcal{X}\to\mathbb{D}$ be a klt Fano degeneration of $\pp(1,1,n)$ with $n\geq 3$ such that $\mathcal{X}_0$ contains a singular point $P\neq\frac{1}{n}(1,1)$. Let $\phi:Y \rightarrow \mathcal{X}_0$ be the minimal resolution of $\mathcal{X}_0$ and let $d$ be the weight of $d$. Then $d\in\{n+3,n+6\}$.\newline Under the morphism $\mu:Y\to\mathbb{F}_d$, it follows that $\mu(\rm{Exc}(\phi))$ consists of the section $\Delta_0$, a fiber $F_1$ and possibly a fiber $F_2$.
Let $f_i$ denote the strict transforms of $F_i$ for $i=1,2$. Then $f_1$ consists of a chain of rational curves $(-n)-(-1)-(-2)-\cdots-(-2)$, while the possible $f_2$ satisfies that $\overline{f_2\setminus(\rm{Exc}(\phi)\cap f_2)}$ is a unique $(-1)$ curve.
\end{proposition}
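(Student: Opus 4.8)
The plan is to argue along the lines of the proof of Lemma~\ref{lem:Hz-minres} and of \cite[Theorem 4.1]{HP10}, extracting extra information from the persistence of the $\frac1n(1,1)$ point. By Proposition~\ref{prop:sing-limits_1-1-n} the singularity basket of $\mathcal{X}_0$ is $\frac1n(1,1)$ together with one or two Wahl singularities, at least one of which is $P$; hence $\operatorname{Exc}(\phi)$ is a disjoint union of chains of smooth rational curves, namely a single $(-n)$-curve $C_0$ over $\frac1n(1,1)$ and the Wahl chains of the Wahl points. By Lemma~\ref{lem:Hz-minres}, $\mu(\operatorname{Exc}(\phi))\subseteq \Delta_0\cup F_1\cup F_2$ with $F_i$ fibers of the ruling $\mathbb{F}_d\to\pp^1$; as $\mu$ contracts only curves disjoint from $\Delta_0$, the strict transform of $\Delta_0$ is a $(-d)$-curve belonging to $\operatorname{Exc}(\phi)$, and every other component of $\operatorname{Exc}(\phi)$ lies inside one of the degenerate fibers $f_1,f_2$ of the induced genus-zero fibration $Y\to\pp^1$.

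The core is a local study of the degenerate fibers. Each $f_i$ blows down to a smooth $0$-curve, its $(-1)$-curves are exactly the $\mu$-contracted curves, and $\operatorname{Exc}(\phi)\cap f_i$ is the union of the maximal sub-chains of curves of self-intersection $\le -2$. Since $\mathcal{X}_0$ has only cyclic quotient singularities, each connected component of $\operatorname{Exc}(\phi)$ is a chain; and since $\Delta_0$ is a section meeting $f_i$ only at the strict transform $\bar F_i$ of the ruling fiber, $\bar F_i$ must sit at an end of $\operatorname{Exc}(\phi)$ adjacent to $\Delta_0$ (or else be a $(-1)$-curve of $f_i$), and $\Delta_0$ cannot meet $C_0$. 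I would first treat the fiber through $C_0$: being its own component of $\operatorname{Exc}(\phi)$, $C_0$ is separated from the rest of that fiber by a $(-1)$-curve; the absence of $A_k$-points in $\mathcal{X}_0$ forces the remaining curves of the fiber to be a chain of $(-2)$-curves that becomes a Wahl chain only once $\Delta_0$ is appended, and the constraint that the fiber blow down to $[0]$ fixes its shape to $(-n)-(-1)-(-2)-\cdots-(-2)$. Reading off the chain $\Delta_0\cup(\text{the }(-2)\text{-tail})$ and using that $[m,2,\ldots,2]$ is a Wahl chain exactly when its number of $2$'s equals $m-4$, I expect to obtain $d=n+3$ when $\operatorname{Exc}(\phi)$ meets only this fiber in a Wahl chain, and $d=n+6$ when the Wahl chain in $f_2$ is needed to complete the chain through $\Delta_0$; the bound $h\le 3$ on the number of degenerate fibers from Lemma~\ref{lem:Hz-minres} keeps the enumeration finite.

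Throughout, the Fano hypothesis and $\rho(\mathcal{X}_0)=1$ are used to keep the fibration under control (so that $-K_Y$ is nef and big, the $(-1)$-curve count of Lemma~\ref{lem:Hz-minres} applies, and $\mathcal{X}_0$ carries no negative curves), and to rule out $d=n$ — which would give $\mathcal{X}_0\cong\pp(1,1,n)$, contradicting the existence of $P$ — together with small values such as $d=4$. For the last assertion on $f_2$ I would run the dichotomy $e=h$ versus $e=h+1$ of Lemma~\ref{lem:Hz-minres}: once $f_1$ is as above, the remaining curves of $\operatorname{Exc}(\phi)$ form the Wahl chain sitting in $f_2$, and matching the number of $(-1)$-curves against the number of degenerate fibers forces $f_2$ to contain a unique $(-1)$-curve, so that $\overline{f_2\setminus(\operatorname{Exc}(\phi)\cap f_2)}$ is precisely that curve.

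The main obstacle will be the combinatorial case analysis: determining, in each degenerate fiber, which component is the strict transform of the ruling fiber (hence the only place $\Delta_0$ may attach), and then matching the resulting dual graphs against the short list of admissible Wahl chains while systematically excluding every chain that would introduce an $A_k$-singularity or a non-Wahl T-singularity in $\mathcal{X}_0$; it is this analysis that collapses the a priori possibilities for the weight to exactly $n+3$ and $n+6$.
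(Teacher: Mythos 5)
Your proposal follows essentially the same route as the paper: Proposition~\ref{prop:sing-limits_1-1-n} pins down the singularity basket, Lemma~\ref{lem:Hz-minres} controls $\mu(\operatorname{Exc}(\phi))$, the fiber through the $(-n)$-curve is forced into the shape $(-n)-(-1)-(-2)-\cdots-(-2)$ by excluding $A_k$-points and blowing down to a $0$-curve, and the weight is read off from Wahl-chain arithmetic on the chain through $\Delta_0$. The only point to tighten is the $d=n+6$ case: there the chain is $[2^{n-1},d,W_1^\vee]$ with $W_1^\vee$ an arbitrary dual Wahl chain (not just a string of $2$'s), so your criterion for $[m,2,\ldots,2]$ does not suffice and you need the general identities $S=3l+1$ for a Wahl chain and $S^\vee=3l^\vee-3$ for its dual, as the paper does via \cite[Proposition 2.5]{UZ24} and \cite[Section 4]{UZ24}.
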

\begin{proof}
Let $\mathcal{X}\rightarrow \mathbb{D}$ be a Fano degeneration such that $\mathcal{X}_t=\pp(1,1,n)$ for $t\neq 0$. By Lemma \ref{extraction}, there exists a family $\pi:(\mathcal{E}\subset \mathcal{Y})\to\mathbb{D}$ together with a divisorial contraction $\pi:\mathcal{Y}\to \mathcal{X}$ contracting $\mathcal{E}$, where for each $t$, we have $\mathcal{E}|_{\mathcal{Y}_t}^2=-n$. In particular $\mathcal{Y}_t=\mathbb{F}_n$ for $t\neq 0$. By Proposition \ref{prop:sing-limits_1-1-n}, $\mathcal{E}|_{\mathcal{Y}_0}$ must contract to the point $P=\frac{1}{n}(1,1)$. 
Following \cite[Section 1.3.2]{BC94}, since $K_{\mathcal{Y}_0}$ is $\pi|_{\mathcal{Y}_0}$-ample then $E_0:=\mathcal{E}|_{\mathcal{Y}_0}$ must be a $(-n)$ curve.\newline
Let $Y\to \mathcal{Y}_0$ be the minimal resolution of $\mathcal{Y}_0$. By Lemma \ref{lem:Hz-minres}, it follows that $\mu(\rm{Exc}(\phi))$ consists of the section $\Delta_0$, a fiber $F_1$ and possibly a fiber $F_2$. Denote by $\widetilde{E_0}$ of $E_0$, then by Lemma \ref{lem:Hz-minres} $\widetilde{E_0}$ is contained in the degenerate fiber $f_1$ of $Y\to \pp^1$. Moreover, if $C$ corresponds to the unique $(-1)$ curve of $f_1$, then $C$ intersects $\widetilde{E_0}$ transversally and, by \cite[Proposition 7.4]{HP10} $f_1$ consists of the curve configuration: $(-n)-(-1)-(-2)-\cdots-(-2)$, where the chain of $(-2)$ curves has length $n-1$. If $Y\to\pp^1$ contains an additional degenerate fiber $f_2$, by Lemma \ref{lem:Hz-minres} it follows that $\overline{f_2\setminus(\rm{Exc}(\phi)\cap f_2)}$ is a unique $(-1)$ curve.\newline
We now compute the weight of $\mathcal{X}_0$, denoted by $d\ge 0$.
Let $P=\frac{1}{n_0^2}(1,n_0a_0-1)$ be the singular point of $\mathcal{X}_0$ whose dual graph contains the strict transform of $\Delta_0$ under the morphism $Y\to\mathbb{F}_d$. By \cite[Proposition 2.5]{UZ24}, we obtain that $$\frac{n_0^2}{n_0a_0-1}=[2,\cdots,2,d,W_1^\vee],$$ where this notation refers to concatenation of the corresponding sequences and ${W_1^\vee}$ denotes the Hirzebruch-Jung continued fraction of some rational number $\frac{n_1^2}{n_1a_1+1}$ where $0<a_1<n_1$ and $\operatorname{gcd}(n_1,a_1)=1$. The sequence $W_1^\vee$ may also be empty. If $l$ denotes the length of the chain of $P$ and $S$, the sum of its entries (analogously $l_{W_1^\vee}, S_{W_1^\vee}$ for $W_1^\vee$), it follows that $S=3l+1$ and $S_{W_1^\vee}=3l_{W_1^\vee}-3$ (see for example \cite[Section 4]{UZ24}). Consequently, $d=n+6$ if $W_1^\vee$ is not empty, and $d=n+3$ if $W_1^\vee$ is empty.
\end{proof}

\begin{theorem}\label{thm:partialsmoothing}
Let $\mathcal{X}\rightarrow \mathbb{D}$ be a klt Fano degeneration of $\pp(1,1,n)$ with $n\geq 3$. Then, the central fiber $\mathcal{X}_0$ is a partial $\mathbb{Q}$-Gorenstein smoothing of a weighted projective plane $\pp(x^2,y^2,n)$ satisfying $x^2+y^2+n=(n+2)xy$.
\end{theorem}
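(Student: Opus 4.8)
\emph{Proof plan.} The plan is to run a case analysis on the singularity basket of $\mathcal{X}_0$, using Propositions~\ref{prop:sing-limits_1-1-n}, \ref{prop:tor-deg}, \ref{prop:min-res-1-1-n} and the $1$-complement furnished by Proposition~\ref{prop:1-comp 1-1-n}. First I would record the numerical input. By~\cite[Lemma 2]{Man91} the degeneration satisfies $\rho(\mathcal{X}_0)=1$, and since $K_{\mathcal{X}}|_{\mathcal{X}_t}\sim_{\mathbb{Q}}K_{\mathcal{X}_t}$ for every $t$ the degree is constant: $K_{\mathcal{X}_0}^2=K_{\pp(1,1,n)}^2=\tfrac{(n+2)^2}{n}$. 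By Proposition~\ref{prop:sing-limits_1-1-n}, $\mathcal{X}_0$ is a klt del Pezzo surface of Picard rank one whose singular points are $\tfrac1n(1,1)$ together with at most two Wahl singularities. If $\mathcal{X}_0$ carried two Wahl points besides $\tfrac1n(1,1)$, then the $1$-complement $(\mathcal{X}_0,\mathcal{B}_0)$ of Proposition~\ref{prop:1-comp 1-1-n} would pass through, and be nodal at, all three singular points, so $|\mathcal{B}_0|\geq 3$ and $c(\mathcal{X}_0,\mathcal{B}_0)=\dim\mathcal{X}_0+\rho(\mathcal{X}_0)-|\mathcal{B}_0|=0$, whence $\mathcal{X}_0$ is toric by~\cite[Theorem 1.1]{MS21}. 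So I may assume $\mathcal{X}_0$ has at most one Wahl singularity besides $\tfrac1n(1,1)$.

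If $\tfrac1n(1,1)$ is the only singular point, the minimal resolution $\phi\colon Y\to\mathcal{X}_0$ contracts a single $(-n)$-curve, so the discrepancy computation at $\tfrac1n(1,1)$ gives $K_Y^2=\tfrac{(n+2)^2}{n}-\tfrac{(n-2)^2}{n}=8$ with $\rho(Y)=2$; hence $Y$ is a Hirzebruch surface carrying a $(-n)$-curve and, as $n\geq 3$, $Y\cong\mathbb{F}_n$ and $\mathcal{X}_0\cong\pp(1,1,n)=\pp(1^2,1^2,n)$, with $(x,y)=(1,1)$ satisfying $x^2+y^2+n=(n+2)xy$. If $\mathcal{X}_0$ is toric, Proposition~\ref{prop:tor-deg} writes it as $\pp(x^2,y^2,n)$ with the desired relation, and the statement is immediate, $\mathcal{X}_0$ being a partial $\mathbb{Q}$-Gorenstein smoothing of itself.

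The remaining case is the heart of the proof: $\mathcal{X}_0$ is non-toric with exactly one Wahl singularity $Q\neq\tfrac1n(1,1)$. I would use Proposition~\ref{prop:min-res-1-1-n}: the minimal resolution $\phi\colon Y\to\mathcal{X}_0$ maps to $\mathbb{F}_d$ with $d\in\{n+3,n+6\}$, and $\operatorname{Exc}(\phi)$ is the disjoint union of the $(-n)$-curve over $\tfrac1n(1,1)$ with the chain resolving $Q$, the latter of shape $[2^{n-1},d,W_1^\vee]$ where $W_1^\vee$ is the Hirzebruch--Jung expansion of a fraction $\tfrac{n_1^2}{n_1a_1+1}$. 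From this data I would read off the Gorenstein index $n_0$ of $Q$ and the integer $n_1$ and verify the identity $n_0^2+n_1^2+n=(n+2)n_0n_1$, using the Wahl relation $S=3l+1$ along the two sub-chains, the computation of $d$, and the mutation description of Lemma~\ref{lem:mutations}. This singles out the weighted projective plane $W:=\pp(n_0^2,n_1^2,n)$, whose singular points are $\tfrac1n(1,1)$, $Q$, and a Wahl point $Q'$ of index $n_1$ (or a torus-fixed smooth point when $n_1=1$). Since $-K_{\mathcal{X}_0}$ and $-K_W$ are big, $H^2(\mathcal{X}_0,T_{\mathcal{X}_0})=H^2(W,T_W)=0$ by~\cite[Proposition 3.1]{HP10}, so $\mathbb{Q}$-Gorenstein deformations of these surfaces are unobstructed; deforming $W$ by smoothing $Q'$ and keeping $\tfrac1n(1,1)$ and $Q$ fixed produces a $\mathbb{Q}$-Gorenstein family over $\mathbb{D}$ with central fiber $W$ whose general fiber $\mathcal{X}_0''$ is a klt del Pezzo of Picard rank one with $K^2=\tfrac{(n+2)^2}{n}$ and singular points $\tfrac1n(1,1)$ and $Q$.

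The final step, which I expect to be the main obstacle, is the identification $\mathcal{X}_0''\cong\mathcal{X}_0$. Here I would observe that $\mathcal{X}_0''$ is itself a klt Fano degeneration of $\pp(1,1,n)$ (smoothing its point $Q$ produces, by the argument of the second paragraph, a copy of $\pp(1,1,n)$), so Proposition~\ref{prop:min-res-1-1-n} applies to $\mathcal{X}_0''$ and, since $\mathcal{X}_0''$ and $\mathcal{X}_0$ share the singular points $\tfrac1n(1,1)$ and $Q$, it forces both minimal resolutions to carry the configuration $[n]\sqcup[2^{n-1},d,W_1^\vee]$ over the same $\mathbb{F}_d$. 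As this configuration is recovered from $\mathbb{F}_d$ by an essentially unique sequence of blow-ups over $\Delta_0$ and two fibers, and $\mathcal{X}_0,\mathcal{X}_0''$ are obtained by contracting $\operatorname{Exc}(\phi)$, we conclude $\mathcal{X}_0\cong\mathcal{X}_0''$. Thus $\mathcal{X}_0$ is a partial $\mathbb{Q}$-Gorenstein smoothing of $W=\pp(x^2,y^2,n)$ with $(x,y)=(n_0,n_1)$ and $x^2+y^2+n=(n+2)xy$. Besides this rigidity step, the other genuine difficulty is the continued-fraction verification of the Markov-type identity.
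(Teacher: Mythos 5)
Your overall strategy runs in the opposite direction from the paper's, and the step you yourself flag as "the main obstacle" is a genuine gap. You construct the candidate toric surface $W=\pp(n_0^2,n_1^2,n)$ abstractly, take a general partial $\qq$-Gorenstein smoothing $\mathcal{X}_0''$ of it, and then try to identify $\mathcal{X}_0''\cong\mathcal{X}_0$ by claiming that the configuration $[n]\sqcup[2^{n-1},d,W_1^\vee]$ "is recovered from $\mathbb{F}_d$ by an essentially unique sequence of blow-ups." This is not justified and is exactly the crux: the $(-1)$-curve $E_2$ in the fiber $f_2$ arises from a \emph{non-toric} blow-up (this is why $\mathcal{X}_0$ fails to be toric in the first place), and the centers of such blow-ups, including infinitely near ones, carry moduli that are not obviously killed by $\Aut(\mathbb{F}_d)$. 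Two surfaces with the same resolution graph over the same $\mathbb{F}_d$ need not be isomorphic, and "the general fiber" of a smoothing of $W$ is not a single well-defined surface; so nothing in your argument places the \emph{specific} surface $\mathcal{X}_0$ inside the $\qq$-Gorenstein deformation space of $W$. You would also need to verify the Markov-type identity $n_0^2+n_1^2+n=(n+2)n_0n_1$ by a direct continued-fraction computation, which you defer.

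The paper's proof avoids both difficulties by going the other way: it degenerates $\mathcal{X}_0$ \emph{to} a toric surface rather than smoothing a toric surface and hoping to hit $\mathcal{X}_0$. Concretely, it observes that $E_2$ admits a \emph{right slide} in the sense of \cite[Definition 4.6]{UZ25}, and since $H^2(\mathcal{X}_0,T_{\mathcal{X}_0})=0$ the local deformation near $E$ globalizes (\cite[Lemma 5.11]{UZ25}) to a $\qq$-Gorenstein family $\mathcal{X}'\to\mathbb{D}$ with $\mathcal{X}'_t\cong\mathcal{X}_0$ for $t\neq 0$ and central fiber $\mathcal{X}'_0$ on which the limit $E'$ of $E$ becomes a toric coordinate axis through a new Wahl point $P'$. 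One then exhibits an explicit toric boundary $\phi'_*(C)+\phi'_*(E_1)+E'$ on $\mathcal{X}'_0$, so $\mathcal{X}'_0$ is toric, and Proposition~\ref{prop:tor-deg} identifies it as $\pp(x^2,y^2,n)$ with $x^2+y^2+n=(n+2)xy$ — which also delivers the Markov identity for free. This exhibits $\mathcal{X}_0$ as a fiber of a $\qq$-Gorenstein family with toric central fiber, which is exactly the assertion. To repair your argument you would need either to prove the rigidity claim (classify all surfaces with the given resolution data up to isomorphism and show there is only one non-toric one), or to replace the last two steps by the slide/degeneration mechanism as in the paper. Your reductions in the first two paragraphs (three singular points forces complexity zero hence toric; a unique singular point forces $\mathcal{X}_0\cong\pp(1,1,n)$ via $K_Y^2=8$) are correct and consistent with the paper.
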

\begin{proof}
Assume that $\mathcal{X}_0$ is not toric. Let $\phi:Y\to\mathcal{X}_0$ be the minimal resolution of $\mathcal{X}_0$. By Proposition \ref{prop:min-res-1-1-n}, the induced fibration $Y\to\pp^1$ must contain degenerate fibers $f_1$ and $f_2$. Let $E_2\subset Y$ be the $(-1)$ curve in the minimal resolution of $\mathcal{X}_0$, then $E=\phi_*(E_2)$ does not form a toric coordinate axis near the point $P$, the singular point whose minimal resolution containing the strict transform of $\Delta_0$ under $\mu:Y\to \mathbb{F}_d$. Indeed, $E_2$ arises as a non-toric blow-up on a component of $f_2$ and $\overline{f_2\setminus E_2}$ consists of a single chain of rational curves. Following \cite[Definition 4.6]{UZ25}, we observe that the curve $E_2$ admits a \textit{right slide}. Since $-K_{\mathcal{X}_0}$ is an ample divisor, by \cite[Proposition 3.1]{HP10}, it follows that $H^2(\mathcal{X},T_{\mathcal{X}_0})=0$.
By \cite[Lemma 5.11]{UZ25}, we deduce that there exists a $\qq$-Gorenstein deformation $\pi^\prime:\mathcal{X}^\prime\to \mathbb{D}$,  where $\mathcal{X}_t^{\prime}\cong\mathcal{X}_0$ for $t \neq 0$ and $E$ degenerates to an irreducible curve $E^{\prime}$ forming a toric coordinate axis near $P$ and near a new Wahl singularity $P^\prime=\frac{1}{n_1^2}\left(1, n_1 a_1-1\right)$. This deformation is obtained by globalizing a deformation on a neighborhood of $E^{\prime}$. \newline
Let $\mathcal{X}_0^\prime$ be the central fiber of $\pi^\prime$ and $\phi^\prime:\mathcal{Y}_0^\prime\to \mathcal{X}_0^\prime$ its minimal resolution. In this new situation, the chain of curves resolving $P^\prime$ lies entirely in a degenerate fiber $f_2$ of $Y^\prime\to\pp^1$. Let $E_1$ denote the $(-1)$ curve of the fiber $f_1$ as in Proposition \ref{prop:min-res-1-1-n}, and let $C$ be the strict transform of a general section $s\sim\Delta_0+df\subset\mathbb{F}_d$. Then $\mathcal{B}=\phi^\prime_*(C)+\phi^\prime_*\left(E_1\right)+E^{\prime}$ forms a toric boundary for $\mathcal{X}_0^{\prime}$. The statement follows from Theorem \ref{prop:tor-deg}.
\end{proof}
\begin{proof}[Proof of Theorem~\ref{thm:1-1-n}]
Let $\mathcal{X}\rightarrow \mathbb{D}$ be a klt Fano degeneration of $\pp(1,1,n)$ with $n\geq 3$, assuming that $\mathcal{X}\neq \pp(1,1,n)$. By Theorem \ref{thm:partialsmoothing}, it remains to show that any non-toric central fiber $\mathcal{X}_0$ admits an effective $\mathbb{G}_m$-action. By Proposition~\ref{prop:1-comp 1-1-n}, $\mathcal{X}_0$ admits a $1$-complement. Hence, by \cite[Proposition 6.1]{P15}, it carries an effective $\mathbb{G}_m$-action. Furthermore, Theorem \ref{thm:partialsmoothing} implies that for $n\geq 3$, a non-toric $\mathcal{X}_0$ has exactly two non-Gorenstein points, so the conditions of \cite[Proposition~6.1]{P15} are satisfied.
\end{proof}

\section{Examples and Questions}
\label{sec:ex-and-quest}

In this section, we present examples and questions for further research.

\begin{example}\label{ex:non-toric-central}
{\em
We give an example of a degeneration of a toric pair where the central fiber is not a toric variety. 
Fix $n\geq 3$ and consider the variety $\mathcal{Y}:=(\pp^1)^n \times \mathbb{D}$ 
and let $\pi_2\colon \mathcal{Y}\rightarrow \mathbb{D}$ be the projection onto the second component. 
Let $B_T$ be the torus-invariant boundary of $(\pp^1)^n$ 
and $\mathcal{B}_\mathcal{Y}:=B_T\times \mathbb{D}$.
Therefore, the pair $(\mathcal{Y},\mathbb{B}_{\mathcal{Y}}+\mathcal{Y}_0)$ 
is a log Calabi--Yau pair for which every fiber of $\pi_2$ is isomorphic to
$((\pp^1)^n,B_T)$. 
Consider the group $\zz_2$ acting on $\mathcal{Y}$ via 
\begin{align*} 
\mu & \colon  (\pp^1)^n\times \mathbb{D} \rightarrow 
(\pp^1)^n \times \mathbb{D} \\
\mu & \cdot ([x_1:y_1],[x_2:y_2],\dots,[x_n:y_n],t) :=
([y_1:x_1],[y_2:x_2],\dots,[y_n:x_n],-t).
\end{align*} 
The log Calabi--Yau pair $(\mathcal{Y},\mathcal{B}_{\mathcal{Y}}+\mathcal{Y}_0)$
is invariant under the $\zz_2$-action. 
Let $(\mathcal{X},\mathcal{B}+\mathcal{X}_0)$ be the induced quotient, so
we obtain a commutative diagram:
\[
\xymatrix{
(\mathcal{X},\mathcal{B}+\mathcal{X}_0) \ar[d]^-{\pi}  & (\mathcal{Y},\mathcal{B}_{\mathcal{Y}}+\mathcal{Y}_0) \ar[d]^-{\pi_2} \ar[l]^-{/\zz_2} \\
\mathbb{D} & \mathbb{D} \ar[l]^-{/\zz_2}
}
\]
The morphism $\pi$ is a degeneration of the toric log Calabi--Yau pair
$((\pp^1)^n,B_T)$ into $(\mathcal{X}_0,\mathcal{B}_0)=((\pp^1)^n,B_T)/\zz_2$. 
We have that $\mathcal{D}(\mathcal{X}_0,\mathcal{B}_0)\simeq_{\rm PL} \mathbb{P}_\rr^{n-1}$. 
Thus, the central fiber $(\mathcal{X}_0,\mathcal{B}_0)$ is not a toric pair. 
}
\end{example} 

\begin{example}\label{ex:P^2-toric-model-1}
{\em 
We consider the pair
$(\pp^2,B)$ where $B$ is the sum of a quadric and a transversal line. 
This example is of cluster type, as described by Gross, Hacking, and Keel~\cite{GHK15}. 
The choice of a torus embedding $\mathbb{G}_m^2\hookrightarrow \pp^2\setminus B$ is equivalent to the choice of a {\em toric model} 
$\phi \colon (T,B_T)\dashrightarrow (\pp^2,B)$. 
More precisely, $\phi$ is a crepant birational map that only extracts log canonical places and $(T,B_T)$ is a toric pair. In this example, we demonstrate 
that the choice of such a toric model, or equivalently the choice of an embedding $\mathbb{G}_m^2\hookrightarrow \pp^2\setminus B$ naturally induces a degeneration of $\pp^2$ to a toric pair.
We will consider a toric model
$(\mathbb{F}_2,B_T)$ of $(\pp^2,B)$ 
where $\mathbb{F}_2$ is the second Hirzebruch surface. This toric model induces an embedding $j_0\colon \mathbb{F}_2\setminus B_T \simeq \mathbb{G}_m^2 \hookrightarrow \pp^2\setminus B$.

We start with the trivial family 
\begin{equation}\label{fam1}
(\mathcal{T},B_{\mathcal{T}})\rightarrow \mathbb{D}, 
\end{equation}
where all the fibers are isomorphic
to the second Hirzebruch surface plus its toric boundary. 
In other words, we have 
\[
(\mathcal{T},B_{\mathcal{T}}) \simeq 
(\mathbb{F}_2,B_T)\times \mathbb{D}.
\]
Let $B_{\mathcal{T}}=S_{\mathcal{T}}+\Delta_{0,\mathcal{T}}+F_{0,\mathcal{T}}+F_{\infty,\mathcal{T}}$ where $S_{\mathcal{T}}\sim \Delta_0+2F\subset \mathbb{F}_2$ and the $f$'s are the fibers of the projection $\mathbb{F}_2\rightarrow \pp^1$. We blow-up the image of a section 
$s(\mathbb{D})\subset \mathcal{T}$ whose image is contained in 
$F_{0,\mathcal{T}}$ and $s(0)$ 
is the intersection of 
$F_{0,\mathcal{T},0}$ and $\Delta_{0,\mathcal{T}}$.
This leads to a family 
\begin{equation}\label{fam2}
(\mathcal{T}',B_{\mathcal{T}'})\rightarrow \mathbb{D} 
\end{equation}
where the general fiber is a blow-up of $\mathbb{F}_2$ along a general point of a fiber of $\mathbb{F}_2\rightarrow \pp^1$
and the central fiber is a toric surface. 
We may blow-down the strict transform of 
$F_{0,\mathcal{T}}$ in $\mathcal{T}'$ to obtain a family 
\begin{equation}\label{fam3}
(\mathcal{Y}^+,B_{\mathcal{Y}^+}) \rightarrow \mathbb{D}. 
\end{equation}
The general fiber of this family is 
isomorphic to $\mathbb{F}_1$ and 
$\mathcal{B}_{Y^+,t}$ has three components; 
a fiber, a section with negative self-intersection, and a section with positive self-intersection.
The central fiber of~\eqref{fam3} is $\mathbb{F}_3$ with its toric boundary.
By \cite[Corollary 3.23]{HTU17}, the family admits an anti-flip which leads to a degeneration 
\begin{equation}\label{fam4}
(\mathcal{Y},B_{\mathcal{Y}})\rightarrow \mathbb{D}. 
\end{equation}
The general fiber of~\eqref{fam4}
is isomorphic to the general fiber of~\eqref{fam3}.
The central fiber of~\eqref{fam4} is
isomorphic to the blow-up of 
$\mathbb{P}(1,1,4)$ at a torus invariant point (see for example \cite[Section 7]{UZ24}). 
The family $\mathcal{Y}\rightarrow \mathbb{D}$ admits a blow-down 
$\mathcal{X}\rightarrow \mathbb{D}$. 
We obtain a family of Calabi--Yau pairs 
\begin{equation}\label{fam5}
(\mathcal{X},\mathcal{B})\rightarrow \mathbb{D}.
\end{equation}

Below, we give an explicit description, modulo change of coordinates, of the family~\eqref{fam5} and the blow-up
$\mathcal{Y}\rightarrow \mathcal{X}$.
The family is given by 
\[
\mathcal{X}_t=\{ [x:y:z:w] \mid xy=z^2+tw = 0\} \subset \pp(1,1,1,2)
\]
for $t\in \mathbb{D}$.
It is straightforward to check that $\mathcal{X}_t\cong \pp^2$ for $t\neq 0$ and $\mathcal{X}_0\cong \pp(1,1,4)$.
Indeed, $\mathcal{X}_t$, for $t\neq 0$, is a smooth del Pezzo surface of Picard rank one. The family of curves $\mathcal{B}\to \mathbb{D}$, where 
\begin{equation}
\mathcal{B}_t=\{[x:y:z:w] \mid xyw^2+tz^2w^2=0\} \subset \mathcal{X}_t \subset \pp(1,1,1,2).
\end{equation}
When $t=0$, $\mathcal{B}_0$ corresponds to the toric boundary of $\pp(1,1,4)$, while for $t\neq 0$ it follows that 
\begin{equation*}
B_t=\{[x:y:z:w] \mid w^2((1+t)z^2+tw)=0\}
\end{equation*}
 which corresponds to the union of the line $L_t=\{w=0\}$ and the irreducible conic 
\begin{equation*}
Q_t=\{[x:y:z:w]\mid (1+t)z^2+tw=0\}.
\end{equation*} 
The intersection points of $L_t$ and $Q_t$ are $[1:0:0:0]$ and $[0:1:0:0]$ which specialize to the smooth toric invariant points of $\mathcal{X}_0$. 
The family~\eqref{fam4} described above is obtained by blowing up the section $s:\mathbb{D}\to \mathcal{X}$ such that $s(t)=[1:0:0:0]$, we obtain a $\qq$-Gorenstein degeneration $\mathbb{F}_1\rightsquigarrow Bl_p\pp(1,1,4)$.

By construction, the family~\eqref{fam5} admits the embedding of a product 
\[
\xymatrix{ 
\mathbb{D} \times \mathbb{G}_m^2 \ar@{^{(}->}[rr]\ar[rd]_-{\pi_1} & &  \mathcal{X}\setminus \mathcal{B} \ar[ld] \\
& \mathbb{D}.& 
}
\]
The restriction of the previous embedding to the general fiber recovers $j_0$
while the restriction to the central fiber 
is just the embedding of the open torus into the toric variety $\pp(1,1,4)$.
}
\end{example} 

We finish this section proposing some questions for further research. In~\cite{AdSFM24}, the first author, together with Alves da Silva and Figueroa, gave a partial classification of cluster type pairs $(\pp^3,B)$ for the projective space $\pp^3$. Following the philosophy of Example~\ref{ex:P^2-toric-model-1}, it is natural to expect a classification of cluster type degenerations of $\pp^3$. 

\begin{question}
Let $(\pp^3,B)$ be a cluster type pair. Is it possible to classify all the cluster type degenerations of this pair? 
\end{question}

Philosophically, degenerations of Fano varieties introduce deeper singularities. Theorem~\ref{no-deg-almost} hints that Fano surfaces with deeper singularities have fewer degenerations. This motivates the question of studying iterated degenerations of Fano varieties. For instance, using the work of Hacking and Prokhorov~\cite{HP10}, one can show that an iteration of (non-trivial)
degenerations of $\pp^2$ is finite, indeed, it has at most three steps.
This motivates the following question. 

\begin{question}
Are iterations of (non-trivial) $\qq$-Gorenstein klt degenerations of $\pp^3$ finite? Are maximal degenerations toric?
\end{question} 

\bibliographystyle{habbvr}
\bibliography{bib}

\end{document}